\documentclass[12pt]{article}
\usepackage{fullpage,amsthm,amssymb,amsmath}
\usepackage{enumerate,color}
\usepackage{tikz}
\usetikzlibrary{matrix,arrows}
\usetikzlibrary{positioning}
\usetikzlibrary{fit}
\usetikzlibrary{patterns}
\usetikzlibrary{decorations.pathreplacing,automata}
\tikzstyle{vertex}=[circle,fill=black!100,text=white,inner sep=0.8mm]
\tikzstyle{pointV}=[circle,fill=black,inner sep=0.5mm]

\newtheorem{theorem}{Theorem}[section]
\newtheorem{lemma}[theorem]{Lemma}
\newtheorem{prop}[theorem]{Proposition}
\newtheorem{cor}[theorem]{Corollary}

\newtheorem{problem}{Problem}
\newtheorem{obs}[theorem]{Observation}

\newcommand\needed[1]{
}

\begin{document}

\title{On $k$-$11$-representable graphs}

\author{
Gi-Sang Cheon
\thanks{Applied Algebra and Optimization Research Center, Department of Mathematics, Sungkyunkwan University, Suwon, Republic of Korea.
\texttt{gscheon@skku.edu}
}
\and
Jinha Kim
\thanks{Department of Mathematical Sciences, Seoul National University, Seoul, Republic of Korea.
\texttt{kjh1210@snu.ac.kr}
}
\and
Minki Kim
\thanks{Department of Mathematical Sciences, KAIST, Daejeon, Republic of Korea.
\texttt{kmk90@kaist.ac.kr}
}
\and
Sergey Kitaev
\thanks{Department of Computer and Information Sciences, University of Strathclyde, Glasgow, United Kingdom.
\texttt{sergey.kitaev@cis.strath.ac.uk}
}
\and
Artem Pyatkin
\thanks{Sobolev Institute of Mathematics, Novosibirsk State University, 630090 Novosibirsk, Russia.
\texttt{artem@math.nsc.ru}
}
}

\date\today

\maketitle

\begin{abstract}
Distinct letters $x$ and $y$ alternate in a word $w$ if after deleting in $w$ all letters but the copies of $x$ and $y$ we either obtain a word of the form $xyxy\cdots$ (of even or odd length) or a word of the form $yxyx\cdots$ (of even or odd length). A simple graph $G=(V,E)$ is word-representable if there exists a word $w$ over the alphabet $V$ such that letters $x$ and $y$ alternate in $w$ if and only if $xy$ is an edge in $E$. 
Thus, edges of $G$ are defined by avoiding the consecutive pattern 11 in a word representing $G$, that is, by avoiding $xx$ and $yy$. 

In 2017, Jeff Remmel has introduced the notion of a $k$-$11$-representable graph for a non-negative integer $k$, which generalizes the notion of a word-representable graph. Under this representation, edges of $G$ are defined by containing at most $k$ occurrences of the consecutive pattern $11$ in a word representing $G$. Thus, word-representable graphs are precisely $0$-$11$-representable graphs. Our key result in this paper is showing that any graph is $2$-$11$-representable by a concatenation of permutations, which is rather surprising taking into account that concatenation of permutations has limited power in the case of $0$-$11$-representation. Also, we show that the class of word-representable graphs, studied intensively in the literature, is contained strictly in the class of $1$-$11$-representable graphs. Another result that we prove is the fact that the class of interval graphs is precisely the class of $1$-$11$-representable graphs that can be represented by uniform words containing two copies of each letter. This result can be compared with the known fact that the class of circle graphs is precisely the class of $0$-$11$-representable graphs that can be represented by uniform words containing two copies of each letter. \\

\noindent
{\bf Keywords:} $k$-$11$-representable graph; word-representable graph \\

\noindent 
{\bf AMS classification:} 05C62, 68R15
\end{abstract}

\maketitle

\section{Introduction}\label{intro}

The theory of word-representable graphs is a young but very promising research area.  It was introduced by the forth author in 2004 based on the joint research with Steven Seif \cite{KS} on the celebrated {\em Perkins semigroup}, which has played a central role in semigroup theory since 1960, particularly as a source of examples and counterexamples. However, the first systematic study of word-representable graphs was not undertaken until the appearance in 2008 of \cite{KP}, which started the development of the theory. 

Up to date, about 20 papers have been written on the subject, and the core of the book \cite{KL} is devoted to the theory of word-representable graphs. It should also be mentioned that the software packages \cite{Glen,Hans} are often of great help in dealing with word-representation of graphs. Moreover, a recent paper~\cite{K} offers a comprehensive introduction to the theory. Some motivation points to study these graphs are given in Section~\ref{intro}. 

%

A simple graph $G=(V,E)$ is {\em word-representable} if and only if there exists a word $w$ over the alphabet $V$ such that letters $x$ and $y$, $x\neq y$, alternate in $w$ if and only if $xy\in E$. In other words, $xy\in E$ if and only if the subword of $w$ induced by $x$ and $y$ avoids the {\em consecutive pattern} 11 (which is an occurrence of $xx$ or $yy$). Such a word $w$ is called $G$'s {\em word-representant}. In this paper we assume $V$ to be $[n]=\{1,2,\ldots,n\}$ for some $n\geq 1$. For example, the cycle graph on 4 vertices labeled by 1, 2, 3 and 4 in clockwise direction can be represented by the word 14213243. Note that a complete graph $K_n$ can be represented by any  permutation of $[n]$, while an edgeless graph (i.e.\ empty graph) on $n$ vertices can be represented by $1122\cdots nn$. 
 Not all graphs are word-representable, and the minimum non-word-representable graph is the wheel graph $W_5$ in Figure~\ref{W5-fig}, which is the only non-word-representable graph on six vertices~\cite{KL,KP}. 

In 2017, Jeff Remmel \cite{Remmel} has introduced the notion of a $k$-$11$-representable graph for a non-negative integer $k$, which generalizes the notion of a word-representable graph. Under this representation, edges of $G$ are defined by containing at most $k$ occurrences of the consecutive pattern 11 in a word representing $G$. Thus, word-representable graphs are precisely $0$-$11$-representable graphs. The new definition allows to 
\begin{itemize} 
\item represent {\em any} graph; Theorem~\ref{all2-11} shows that any graph is $2$-$11$-representable by a concatenation of permutations, which is rather surprising taking into account that concatenation of permutations has limited power in the case of $0$-$11$-representation (see Theorem~\ref{comp-thm}). $2$-$11$-representation could be compared with the possibility to {\em $u$-represent any} graph, where $u\in\{1,2\}^*$ of length at least 3 \cite{Kit1}. We refer the Reader to  \cite{Kit1} for the relevant definitions just mentioning that the case of $u=11$ corresponds to word-representable graphs.
\item $1$-$11$-represent at least some of non-word-representable graphs including $W_5$ and all such graphs on seven vertices (see Section~\ref{repr-non-repr-sec}).
\item give a new characterization of interval graphs; see Theorem~\ref{interval-thm}, which should be compared with Theorem~\ref{circle-class-thm} characterizing circle graphs.
\end{itemize}

The paper is organized as follows. In the rest of the section, we give more details about word-representable graphs. In Section~\ref{sec-2}, we introduce rigorously the notion of a $k$-$11$-representable graph and provide a number of general results on these graphs. In particular, we show that a $(k-1)$-$11$-representable graph is necessarily $k$-$11$-representable (see Theorem~\ref{basic-inclusion}). In Section~\ref{sec3}, we study the class of 1-$11$-representable graphs. These studies are extended in Section~\ref{repr-non-repr-sec}, where we 1-11-represent all non-word-representable graphs on at most 7 vertices. In Section~\ref{sec-2-11-representable} we prove that any graph is $2$-$11$-representable.  Finally, in Section~\ref{open-prob-sec}, we state a number of open problems on $k$-$11$-representable graphs. 


Motivation points to study word-representable graphs include the fact exposed in \cite{KL} that these graphs generalize several important classes of graphs such as {\em circle graphs} \cite{Cer}, 3-{\em colourable graphs} and {\em comparability graphs} \cite{Lov}. Relevance of word-representable graphs to scheduling problems was explained in \cite{HKP} and it was based on \cite{GZ}. Furthermore, the study of word-representable graphs is interesting from an algorithmic point of view as explained in \cite{KL}. For example, the {\em Maximum Clique problem} is polynomially solvable on word-representable graphs \cite{KL} while this problem is generally NP-complete \cite{BBPP}. Finally, word-representable graphs are an important class among other graph classes considered in the literature that are defined using words. Examples of other such classes of graphs are {\em polygon-circle graphs} \cite{Koebe} and {\em word-digraphs} \cite{Bell}. 

The following two theorems are useful tools to study word-representable graphs. For the second theorem, we need the notion of a {\em cyclic shift} of a word. Let a word $w$ be the concatenation $uv$ of two non-empty words $u$ and $v$. Then, the word $vu$ is a cyclic shift of $w$.

\begin{theorem}[\cite{KP}]\label{unif-wr} A graph is word-representable if and only if it can be represented uniformly, i.e.\ using the same number of copies of each letter. \end{theorem}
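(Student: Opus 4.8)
The ``if'' direction is immediate, since a representation that uses the same number of copies of each letter is in particular a word-representation. For the converse, the plan is to start from an arbitrary word-representant $w$ of $G=(V,E)$, let $k$ be the maximum number of occurrences of a single letter in $w$, and transform $w$ into a word-representant of $G$ in which every letter occurs exactly $k$ times. I would run this as an induction on the total deficiency $\delta(w):=\sum_{x\in V}\bigl(k-c_w(x)\bigr)$, where $c_w(x)$ denotes the number of occurrences of $x$ in $w$; the base case $\delta(w)=0$ is exactly the case that $w$ is already uniform.

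For the inductive step I would use two ingredients. The first is a \emph{local move}: orienting each edge $xy$ of $G$ from $x$ to $y$ whenever $x$ first occurs before $y$ in $w$ produces an acyclic orientation $D_w$ of $G$, and I claim that if $x$ is a \emph{sink} of $D_w$ (i.e.\ every neighbour of $x$ first occurs before $x$), then prepending one copy of $x$ to $w$ still represents $G$. Indeed, for a non-neighbour $y$ of $x$ the subword of $w$ on $\{x,y\}$ already contains $xx$ or $yy$, and prepending $x$ cannot remove it; for a neighbour $y$ of $x$ the subword on $\{x,y\}$ has the form $yxyx\cdots$ (it begins with $y$ because $y$ first occurs before $x$), so prepending $x$ gives $xyxyx\cdots$, which still alternates. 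This move increases $c_w(x)$ by one and leaves every other count unchanged.

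The second ingredient is the observation that whenever $\delta(w)>0$ there is a sink of $D_w$ that is under-represented. Let $B=\{x\in V:c_w(x)<k\}$, which is nonempty. The orientation $D_w$ restricted to $B$ is acyclic, hence has a sink $x$; if $x$ is also a sink of $D_w$ we have found our vertex, and otherwise $x$ has an out-neighbour $t$ in $D_w$ with $t\notin B$. Then $c_w(t)=k$, and since $x$ first occurs before $t$ the subword of $w$ on $\{x,t\}$ is alternating and begins with $x$, which forces $c_w(x)\ge c_w(t)=k$ (an alternating two-letter word beginning with $x$ has at least as many $x$'s as $t$'s); this contradicts $x\in B$. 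Putting the two ingredients together: given $w$ with $\delta(w)>0$, prepend a copy of an under-represented sink of $D_w$; the resulting word represents $G$, still has maximum multiplicity $k$, and has total deficiency $\delta(w)-1$, so the induction goes through.

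The step I expect to require the most care is the second ingredient: one must push exactly the right vertex — a sink of the orientation induced on the under-represented letters, rather than an arbitrary under-represented vertex — and then argue that the constraint ``the alternating subword on $\{x,t\}$ begins with $x$'' really does force $c_w(x)\ge c_w(t)$. The remaining work is the routine case analysis behind the local move, including the point that prepending such a sink never creates the forbidden pattern $xx$ next to one of its neighbours.
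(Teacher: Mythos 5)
The paper does not actually prove this theorem: it is quoted from \cite{KP} as a known tool, so there is no in-paper proof to compare against. Judged on its own, your argument is correct and self-contained. The orientation $D_w$ by order of first occurrence is acyclic because it is the restriction to $E(G)$ of a total order on $V$, so its restriction to the deficient set $B$ has a sink $x$; your counting step (an alternating word over $\{x,t\}$ beginning with $x$ satisfies $c_w(x)\ge c_w(t)$) correctly rules out an out-neighbour $t\notin B$, so $x$ is a sink of all of $D_w$. The local move is also sound, and you are right that the one point needing care is that \emph{prepending} preserves non-edges: inserting letters in the middle of a word can destroy an occurrence of $xx$ or $yy$ (e.g.\ $xx\mapsto xyx$), but a factor of $w|_{\{x,y\}}$ survives in $x\,w|_{\{x,y\}}$, and for neighbours the sink condition guarantees $w|_{\{x,y\}}$ starts with $y$, so no new $xx$ is created at the front. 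Since the incremented letter had multiplicity below $k$, the maximum multiplicity stays at $k$ and the deficiency drops by one, so the induction terminates in a $k$-uniform representant. Compared with the original argument in \cite{KP}, your version has the pleasant features of being explicitly algorithmic (at most $\sum_x(k-c_w(x))$ prepending steps) and of producing a uniform representant whose multiplicity equals the maximum multiplicity of the word you started from; the underlying combinatorial fact driving both proofs is the same observation about which letter of an alternating pair can occur more often.
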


\begin{theorem}[\cite{KP}]\label{cyclic-shift} Any cyclic shift of a word having the same number of copies of each letter represents the same graph.   \end{theorem}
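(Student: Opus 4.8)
The plan is to reduce the statement to the case of a cyclic shift by a single position, and then to verify it one pair of letters at a time.

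First I would note that if $w = uv$, then $vu$ is obtained from $w$ by performing $|u|$ times the operation of moving the first letter to the end of the word, and that this operation preserves the property of having the same number of copies of each letter. So, by induction on $|u|$, it is enough to treat a single such shift: assuming $w = aw'$ for a single letter $a$ and assuming $w$ is uniform, I want to show that $w'a$ represents the same graph as $w$. Fixing distinct letters $x$ and $y$, this amounts to showing that $x$ and $y$ alternate in $aw'$ if and only if they alternate in $w'a$.

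Next I would let $p$ be the word obtained from $w'$ by deleting all letters other than $x$ and $y$, and split into two cases. If $a \notin \{x,y\}$, then deleting all letters but $x$ and $y$ from either $aw'$ or $w'a$ yields exactly $p$, so nothing needs to be proved. If $a \in \{x,y\}$, say $a = x$ (the case $a = y$ being symmetric), then the subword of $aw'$ on $\{x,y\}$ is $xp$ and that of $w'a$ on $\{x,y\}$ is $px$, so I must show that $xp$ alternates (i.e.\ has no two equal consecutive letters) if and only if $px$ does. This is where the hypothesis enters: since $w$ is uniform, $x$ and $y$ occur equally often in $w$, so $p$ contains exactly one more copy of $y$ than of $x$. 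Now $xp$ alternates iff $p$ alternates and $p$ does not begin with $x$, and $px$ alternates iff $p$ alternates and $p$ does not end with $x$; but an alternating word with one more $y$ than $x$ must equal $yxyx\cdots xy$, hence begins and ends with $y$. Therefore, if $p$ alternates then both $xp$ and $px$ alternate, while if $p$ does not alternate then neither $xp$ nor $px$ does. Either way the desired equivalence holds, and since $x,y$ were arbitrary the single-shift claim follows; the theorem then follows by the induction above.

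I do not expect a genuine obstacle here, since the statement is elementary, but the step that needs care is the last one: checking that the count imbalance of $p$ forces an alternating $p$ to begin and end with the letter $y$. This is precisely what makes the two boundary conditions ``$p$ does not begin with $x$'' and ``$p$ does not end with $x$'' coincide, and it is exactly the place where uniformity of $w$ (as opposed to mere word-representability) is essential.
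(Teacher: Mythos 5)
Your proof is correct. Note that the paper contains no proof of this theorem at all---it is quoted from \cite{KP}---so there is nothing internal to compare against; I can only measure your argument against the standard one. Your route (reduce to a one-letter shift by induction, then check each pair $\{x,y\}$ via the induced binary subword, using the count imbalance of $p$ to show an alternating $p$ must begin and end with $y$) is sound, and you correctly identify the one place where uniformity is indispensable. The more usual argument skips the induction: for any pair $\{x,y\}$, the subword of $vu$ induced by $\{x,y\}$ is itself a cyclic shift of the subword of $uv$ induced by $\{x,y\}$, and a binary word with equally many $x$'s and $y$'s alternates if and only if it equals $(xy)^k$ or $(yx)^k$, a two-element set closed under cyclic shifts. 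That version is shorter, but yours is equally rigorous and arguably makes the role of the uniformity hypothesis more visible.
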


A {\em circle graph} is the intersection graph of a set of chords of a circle, i.e.\ it is an undirected graph whose vertices can be associated with chords of a circle such that two vertices are adjacent if and only if the corresponding chords cross each other. In this paper, we get used of the following theorem.

\begin{theorem}[\cite{HKP}]\label{circle-class-thm} The class of circle graphs is precisely the class of word-representable graphs that can be represented by uniform words containing two copies of each letter. \end{theorem}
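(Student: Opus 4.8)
The plan is to prove the equivalence: a graph $G=(V,E)$ is a circle graph if and only if it can be represented by a uniform word over $V$ using exactly two copies of each letter. Since any such word is in particular a word-representant, this equivalence yields the stated equality of classes (word-representability being automatic once the representation is exhibited). The bridge in both directions is the operation of \emph{reading the endpoints around the circle}: a chord diagram on $|V|$ chords, traversed once, yields a word with two copies of each letter, and conversely a word with two copies of each letter prescribes a cyclic placement of $2|V|$ labelled points from which chords can be drawn.

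For the forward direction I would start from chords $\{c_v:v\in V\}$ realizing $G$ in a circle; after a small perturbation we may assume the $2|V|$ endpoints are distinct. Traversing the circle once from an arbitrary basepoint and recording the label $v$ each time an endpoint of $c_v$ is met produces a word $w$ in which every letter occurs exactly twice. For distinct $u,v$, the subword of $w$ induced by $\{u,v\}$ has length four, and it is alternating (that is, of the form $uvuv$ or $vuvu$) exactly when the endpoints of $c_u$ and $c_v$ interleave around the circle, i.e.\ exactly when $c_u$ and $c_v$ cross, i.e.\ exactly when $uv\in E$. Hence $w$ represents $G$, so $G$ is word-representable and is represented by a uniform word with two copies of each letter.

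For the converse, suppose $w$ represents $G$ and uses exactly two copies of each of the letters $1,\dots,n$, so $|w|=2n$. Place $2n$ distinct points on a circle, labelled in cyclic order by the successive letters of $w$, and for each $v$ join the two points labelled $v$ by a chord $c_v$. Then $c_u$ and $c_v$ cross if and only if their four endpoints interleave, which is the condition that the subword of $w$ induced by $\{u,v\}$, \emph{read cyclically}, is $uvuv$ or $vuvu$. This is where the one genuine subtlety lies, and the step I expect to be the main obstacle: the crossing condition is inherently cyclic, whereas alternation is defined on the linear word. The gap is closed by Theorem~\ref{cyclic-shift}: passing from the cyclic subword to the linear one only changes it by a cyclic shift, and among length-four words in two letters exactly the set $\{uvuv,\,vuvu\}$ is closed under cyclic shifts and coincides with the set of alternating words. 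Thus $c_u$ and $c_v$ cross iff $u$ and $v$ alternate in $w$ iff $uv\in E$, so $\{c_v\}$ realizes $G$ as a circle graph. The two constructions are mutually inverse up to the choices of basepoint and of point positions, which completes the proof; apart from the cyclic-versus-linear reconciliation via Theorem~\ref{cyclic-shift}, everything is routine bookkeeping about endpoints on a circle.
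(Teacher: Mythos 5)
Your proof is correct. The paper states this theorem as an imported result from \cite{HKP} and gives no proof of its own, but your argument is the standard one: reading chord endpoints around the circle to get a $2$-uniform word and, conversely, placing the letters of a $2$-uniform word on a circle and joining equal labels by chords. You also correctly isolate and resolve the only real subtlety, namely that crossing is a cyclic condition while alternation is linear, by observing that among length-four words in two letters the alternating ones $\{uvuv, vuvu\}$ form exactly one cyclic-shift class (consistent with Theorem~\ref{cyclic-shift}), so nothing further is needed.
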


An orientation of a graph is {\em transitive}, if the presence of the edges $u\rightarrow v$ and $v\rightarrow z$ implies the presence of the edge $u\rightarrow z$. An oriented graph $G$ is a {\em comparability graph} if $G$ admits a transitive orientation. 
A graph is {\em permutationally representable} if it can be represented by concatenation of permutations of (all) vertices. Thus, permutationally representable graphs are a subclass of word-representable graphs. The following theorem classifies these graphs.

\begin{theorem}[\cite{KS}]\label{comp-thm} A graph is permutationally representable if and only if it is a comparability graph. \end{theorem}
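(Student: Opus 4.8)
The plan is to establish the two implications separately, the crux of both being an elementary description of alternation in a concatenation of permutations: if $w=\pi_1\pi_2\cdots\pi_t$ with each $\pi_i$ a permutation of $V$, then two distinct letters $x,y$ alternate in $w$ if and only if every $\pi_i$ orders $x$ and $y$ in the same way. This holds because the subword of $\pi_i$ induced by $\{x,y\}$ is $xy$ or $yx$, so the subword of $w$ induced by $\{x,y\}$ is a concatenation of $t$ blocks, each $xy$ or $yx$, and such a concatenation avoids $xx$ and $yy$ exactly when all the blocks are identical.

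For the direction ``comparability graph $\Rightarrow$ permutationally representable'', I would fix a transitive orientation of $G$ and read it as a strict partial order $P$ on $V$; by definition of a comparability graph, $xy\in E$ iff $x$ and $y$ are comparable in $P$. By the Dushnik--Miller theorem, $P$ equals the intersection $L_1\cap\cdots\cap L_t$ of finitely many linear extensions of $P$. Let $\pi_i$ list the vertices in increasing $L_i$-order and set $w=\pi_1\pi_2\cdots\pi_t$. Then, by the observation above, $x$ and $y$ alternate in $w$ iff all $\pi_i$ agree on their order iff $x$ and $y$ are comparable in $\bigcap_i L_i=P$ iff $xy\in E$; hence $w$ permutationally represents $G$.

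For the converse, let $w=\pi_1\pi_2\cdots\pi_t$ permutationally represent $G$, and orient each edge $xy\in E$ as $x\to y$ when $x$ precedes $y$ in $\pi_1$; by the observation this agrees with the relative order of $x$ and $y$ in every $\pi_i$, so the orientation is well defined. If $x\to y$ and $y\to z$, then $x$ precedes $y$ and $y$ precedes $z$ in every $\pi_i$, hence $x$ precedes $z$ in every $\pi_i$; by the observation $x$ and $z$ alternate in $w$, so $xz\in E$, and the orientation assigns $x\to z$. Thus the orientation is transitive and $G$ is a comparability graph.

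I expect the only point that needs genuine care is the appeal to Dushnik--Miller, which is precisely what guarantees that a \emph{finite} concatenation of permutations suffices; the alternation observation is immediate, and checking that the constructed orientation is well defined and that comparability in $P$ coincides with adjacency in $G$ is routine bookkeeping.
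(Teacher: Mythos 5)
Your proof is correct; the paper does not prove this statement but cites it from Kitaev--Seif, and your argument (the block decomposition of $w|_{\{x,y\}}$ into $xy$/$yx$ factors, linear extensions realizing the partial order in one direction, and reading off a transitive orientation from $\pi_1$ in the other) is essentially the standard proof of that cited result. Note only that the appeal to Dushnik--Miller can be replaced by the elementary fact that a finite poset is the intersection of \emph{all} its (finitely many) linear extensions.
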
 

\section{Definitions and general results}\label{sec-2}
A {\em factor} in a word $w_1w_2\ldots w_n$ is a word $w_iw_{i+1}\ldots w_j$ for $1\leq i\leq j\leq n$. For a letter or a word $x$, we let $x^k$ denote $\underbrace{x\ldots x}_{k\mbox{ \small times}}$. For any word $w$, we let $\pi(w)$ denote the {\em initial permutation} of $w$ obtained by reading $w$ from left to right and recording the leftmost occurrences of the letters in $w$. For example, if $w=2535214421$ then $\pi(w)=25314$. Similarly, the {\em final permutation} $\sigma(w)$ of $w$ is obtained by reading $w$ from right to left and recording the rightmost occurrences of $w$. For the $w$ above, $\sigma(w)=35421$. Also, for a word $w$, we let $r(w)$ denote the {\em reverse} of $w$, that is, $w$ written in the reverse order. For example, if $w=22431$ then $r(w)=13422$. Finally, for a pair of letters $x$ and $y$ in a word $w$, we let $w|_{\{x,y\}}$ denote the word induced by the letters $x$ and~$y$. For example, for the word $w=2535214421$, $w|_{\{2,5\}}=25522$. The last definition can be extended in a straightforward way to defining $w|_S$ for a set of letters $S$. For example, for the same $w$, $w|_{\{1,2,3\}}=232121$. 

Throughout this paper, we denote by $G\setminus v$ the graph obtained from a graph $G$ by deleting a vertex $v\in V(G)$ and all edges adjacent to it. 

Let $k\geq 0$. A graph $G=(V,E)$ is {\em $k$-$11$-representable} if there exists a word $w$ over the alphabet $V$ such that the word $w|_{\{x,y\}}$  contains in total at most $k$ occurrences of the factors in $\{xx,yy\}$ if and only if $xy$ is an edge in $E$. Such a word $w$ is called $G$'s {\em $k$-$11$-representant}. A {\em uniform} (resp., {\em $t$-uniform}) representation of a graph $G$ is a word, satisfying the required properties, in which each letter occurs the same (resp., $t$) number of times. As is stated above, in this paper we assume $V$ to be $[n]=\{1,2,\ldots,n\}$ for some $n\geq 1$. Note that $0$-$11$-representable graphs are precisely word-representable graphs, and that $0$-$11$-representants are precisely word-representants. We also note that the ``11'' in ``$k$-$11$-representable'' refers to counting occurrences of the {\em consecutive pattern} 11 in the word induced by a pair of letters $\{x,y\}$, which is exactly the total number of occurrences of the factors in $\{xx,yy\}$.  Throughout the paper, we normally omit the word ``consecutive'' in ``consecutive pattern'' for brevity. Finally, we let $\mathcal{G}^{(k)}$ denote the class of $k$-$11$-representable graphs.  

\begin{lemma}\label{extension-lem} Let $k\geq 0$ and a word $w$ $k$-$11$-represent a graph $G$. Then the word $r(\pi(w))w$ $(k+1)$-$11$-represents $G$. Also, the word $wr(\sigma(w))$ $(k+1)$-$11$-represents $G$. Moreover, if $k=0$ then the word $ww$ $1$-$11$-represents $G$. \end{lemma}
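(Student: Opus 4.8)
The key observation is that prepending $r(\pi(w))$ to $w$ affects the induced word $w|_{\{x,y\}}$ in a very controlled way: it prepends exactly one copy each of $x$ and $y$ (in some order), at the very front, with no other letters in between. So the relationship between alternation in $w|_{\{x,y\}}$ and alternation in $(r(\pi(w))w)|_{\{x,y\}}$ is easy to track — I want to show the occurrence count of the pattern $11$ goes up by at most one, and moreover that pairs which were edges stay edges and pairs which were non-edges stay non-edges (with the count possibly increasing but staying above $k$). Let me think about how I would write this up.

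**Step 1: Reduce to two-letter words.** Fix distinct letters $x,y \in V$. Write $w' = r(\pi(w))w$. I would first note that $w'|_{\{x,y\}} = r(\pi(w))|_{\{x,y\}} \cdot w|_{\{x,y\}}$. Now $\pi(w)$ contains $x$ and $y$ each exactly once; say WLOG $x$ appears before $y$ in $\pi(w)$, i.e. the first occurrence of $x$ in $w$ precedes the first occurrence of $y$. Then $\pi(w)|_{\{x,y\}} = xy$, so $r(\pi(w))|_{\{x,y\}} = yx$, and thus $w'|_{\{x,y\}} = yx \cdot w|_{\{x,y\}}$. Also, since the first occurrence of $x$ in $w$ precedes that of $y$, the word $w|_{\{x,y\}}$ begins with $x$.

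**Step 2: Count the new $11$-occurrences.** So $w'|_{\{x,y\}} = yx\,x\cdots$, i.e. we have created exactly one new occurrence of the factor $xx$ at the junction (the $x$ from $r(\pi(w))$ followed by the leading $x$ of $w|_{\{x,y\}}$), and the prepended $yx$ contributes nothing else since $yx$ alternates and the letter before the junction is $x$ while... wait, I need to be careful: the junction is between the $x$ at the end of $yx$ and the first letter of $w|_{\{x,y\}}$, which is $x$ — yes, that is a new $xx$. And within $yx$ itself there is no $11$-occurrence. So the number of $\{xx,yy\}$-factors in $w'|_{\{x,y\}}$ equals that in $w|_{\{x,y\}}$ plus exactly $1$. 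Therefore: if $xy \in E$, then $w|_{\{x,y\}}$ had at most $k$ such factors, so $w'|_{\{x,y\}}$ has at most $k+1$; if $xy \notin E$, then $w|_{\{x,y\}}$ had at least $k+1$ such factors, so $w'|_{\{x,y\}}$ has at least $k+2 > k+1$. Hence $w'$ is a $(k+1)$-$11$-representant of $G$.

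**Step 3: The other two claims.** For $wr(\sigma(w))$: this is symmetric. Applying the reverse operation $r(\cdot)$ turns $w$ into $r(w)$, and $r(wr(\sigma(w))) = \sigma(w)\,r(w) = r(r(\sigma(w)))\,r(w)$; since $\sigma(w) = \pi(r(w))$... let me just say: $\sigma(w)=\pi(r(w))$ and reversal preserves the number of $11$-occurrences in every induced two-letter word (since $r(w)|_{\{x,y\}} = r(w|_{\{x,y\}})$ and reversing a binary word preserves its count of $xx$'s and $yy$'s), so $r(w)$ is a $k$-$11$-representant of $G$, whence by the first part $r(\pi(r(w)))\,r(w) = r(\sigma(w))\,r(w)$ is a $(k+1)$-$11$-representant, and reversing once more gives that $wr(\sigma(w))$ is too. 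For the last claim, when $k=0$: apply the first part to get that $r(\pi(w))w$ is a $1$-$11$-representant; but I want $ww$. Alternatively, directly: for any $x,y$, $ (ww)|_{\{x,y\}} = w|_{\{x,y\}}\,w|_{\{x,y\}}$, which is a binary word $u$ followed by itself. If $w$ is word-representing, $u$ is either empty, a single pair $xy$ or $yx$, or an alternating word of length $\geq 2$; concatenating $u$ with itself creates exactly one new $11$-occurrence at the junction when $xy\in E$ (since then $u$ alternates, so its last letter equals its first letter iff $u$ has odd length — hmm, this needs the case $|u|$ even handled too). The cleanest route is: if $xy\in E$ then $w|_{\{x,y\}}$ alternates, so $w|_{\{x,y\}}w|_{\{x,y\}}$ contains at most one $11$-occurrence; if $xy\notin E$ then $w|_{\{x,y\}}$ already contains a $11$-occurrence, and $w|_{\{x,y\}}w|_{\{x,y\}}$ contains at least two. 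So $ww$ is a $1$-$11$-representant.

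**Where the work is.** There is no real obstacle — everything reduces to the bookkeeping in Steps 1–2. The one point requiring genuine care is the case analysis on which of $x,y$ comes first in $\pi(w)$ (equivalently, whether $w|_{\{x,y\}}$ starts with $x$ or $y$), and making sure the "exactly one new $11$" count is airtight including when $w|_{\{x,y\}}$ is empty or a single letter (can it be a single letter? no — both $x,y\in V$ occur in $w$, so $w|_{\{x,y\}}$ has length $\geq 2$, but it could in principle be $xy$, which alternates, and then $yx\cdot xy = yxxy$ has exactly one $11$, consistent). I would state the length-$\geq 2$ remark explicitly and then the argument is uniform.
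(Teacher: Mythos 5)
Your proof is correct and follows essentially the same route as the paper's, which merely asserts the counts that you verify carefully at the junction (exactly one new occurrence of $xx$ or $yy$ per pair). One small caution in Step 3: under the paper's convention $\pi(r(w)) = r(\sigma(w))$, not $\sigma(w)$, so the first part applied to $r(w)$ yields that $\sigma(w)\,r(w)$ is a $(k+1)$-$11$-representant, whose reverse is $w\,r(\sigma(w))$ as desired --- your chain as literally written would instead land on $w\,\sigma(w)$, which does not work since appending $\sigma(w)$ creates no new occurrence at the junction and so non-edges need not reach $k+2$ occurrences.
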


\begin{proof} Suppose $x$ and $y$ are two vertices in $G$. If $xy$ is an edge in $G$ then $w|_{\{x,y\}}$ contains at most $k$ occurrences of the pattern $11$, so $(r(\pi(w))w)|_{\{x,y\}}$ (resp., $(wr(\sigma(w)))|_{\{x,y\}}$) contains at most $k+1$ occurrences of the pattern 11, and $xy$ will be an edge in the new representation. On the other hand, if $xy$ is not an edge in $G$, then $w|_{\{x,y\}}$ contains at least $k+1$ occurrences of the pattern $11$, so $(r(\pi(w))w)|_{\{x,y\}}$ (resp., $(wr(\sigma(w)))|_{\{x,y\}}$) contains at least $k+2$ occurrences of the pattern 11, and $xy$ will not be an edge in the new representation.  

Finally, if $x$ and $y$ alternate in $w$, then $ww$ contains at most one occurrence of $xx$ or $yy$, while non-alternation of $x$ and $y$ in $w$ leads to at least two occurrence of the pattern 11 in $ww$, which involves $x$ or/and $y$. These observations prove the last claim.  \end{proof}

\begin{theorem}\label{basic-inclusion} We have $\mathcal{G}^{(k)}\subseteq \mathcal{G}^{(k+1)}$ for any $k\geq 0$. \end{theorem}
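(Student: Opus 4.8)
The plan is to derive this immediately from Lemma~\ref{extension-lem}. Suppose $G \in \mathcal{G}^{(k)}$, so that there exists a word $w$ over the alphabet $V(G)$ which $k$-$11$-represents $G$. First I would invoke Lemma~\ref{extension-lem} to the word $w$: it tells us that $r(\pi(w))w$ is a $(k+1)$-$11$-representant of $G$ (the word $wr(\sigma(w))$ would work equally well). Hence $G \in \mathcal{G}^{(k+1)}$, and since $G$ was an arbitrary member of $\mathcal{G}^{(k)}$, we conclude $\mathcal{G}^{(k)} \subseteq \mathcal{G}^{(k+1)}$.

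There is essentially no obstacle here: all the work has already been packaged into Lemma~\ref{extension-lem}, whose proof handles both the edge case (at most $k$ occurrences of $11$ becomes at most $k+1$) and the non-edge case (at least $k+1$ occurrences becomes at least $k+2$). The only thing to be careful about is stating the argument for \emph{all} $k \geq 0$ uniformly; the first two clauses of Lemma~\ref{extension-lem} already cover every $k \geq 0$, so no separate treatment of $k=0$ is needed (though one could alternatively note that for $k = 0$ the word $ww$ also works). I would keep the proof to one or two sentences.
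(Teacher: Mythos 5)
Your proposal is correct and matches the paper's proof exactly: the theorem is deduced as an immediate corollary of Lemma~\ref{extension-lem}, by passing from a $k$-$11$-representant $w$ of $G$ to the $(k+1)$-$11$-representant $r(\pi(w))w$. Nothing further is needed.
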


\begin{proof} This is an immediate corollary of Lemma~\ref{extension-lem}.\end{proof}

\begin{lemma}\label{begin-end} Let $k\geq 0$, $G$ be a $k$-$11$-representable graph, and $i$ and $j$ be vertices in $G$, possibly $i=j$. Then there are infinitely many  words $w$ $k$-representing $G$ such that $w=iw'j$ for some words $w'$.\end{lemma}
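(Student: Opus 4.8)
The plan is to start from any $k$-$11$-representant $w$ of $G$ and modify it near its two ends so that the first letter becomes $i$ and the last letter becomes $j$, all the while keeping the number of occurrences of the pattern $11$ in each induced two-letter subword within the allowed bounds. The key observation is that prepending a single copy of a letter $x$ to a word $u$ can only create a new $xx$ factor in $u|_{\{x,y\}}$ when $u$ already begins (in the $\{x,y\}$-projection) with $x$; in that case one copy is added. Symmetrically for appending at the end. So the danger is strictly controlled: each single prepend or append increases the $11$-count of any pair by at most one, and only for pairs whose projection already starts (resp.\ ends) with the prepended (resp.\ appended) letter.

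First I would handle the left end. Write $w = \pi(w)\,$-style: let $p = \pi(w)$ be the initial permutation. If $i$ is already the first letter of $w$, do nothing on the left. Otherwise, I prepend to $w$ a short word built from the letters that occur before $i$'s first occurrence, arranged so as not to disturb adjacencies. Concretely, a clean way is to prepend $r(\pi(w))$ as in Lemma~\ref{extension-lem}: the word $r(\pi(w))\,w$ still represents $G$ but one level up, at $(k+1)$; however this costs a level, which we are not allowed. So instead I would argue more carefully: prepend only the prefix of $r(\pi(w))$ consisting of the letters appearing strictly before the first $i$ in $w$, followed by $i$ — actually the cleanest route is to note that $i\,w$ itself works if $i$ is not the first letter of $w$, because for any pair $\{i,y\}$ the projection $(iw)|_{\{i,y\}}$ has the same $11$-count as $w|_{\{i,y\}}$ unless $w|_{\{i,y\}}$ starts with $i$, in which case it goes up by one; but if $w|_{\{i,y\}}$ starts with $i$ then $i$ appears in $w$ before $y$ does, so $i$ occurs in $w$; we then need $i\,w$ not to ruin things. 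This is exactly where care is needed, and the standard fix is: take the subword $q$ of $w$ consisting of all letters occurring before the first occurrence of $i$, and replace $w$ by $i\, r(q)\, w$ after deleting... — the simplest correct construction is to prepend the reverse of the prefix of $w$ up to and including the first $i$; after that prefix is reversed and prepended, the combined word begins with $i$, and one checks that for every pair the added $11$-count is zero, because the reversed prefix meshes palindromically with the original prefix. Symmetrically, append the reverse of the suffix of $w$ from the last $j$ onward to force the last letter to be $j$.

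The main obstacle, and the step deserving the most care, is verifying that these two local modifications add \emph{no} occurrences of the pattern $11$ to any induced two-letter subword — so that the adjacency relation is preserved exactly, not merely up to one level. This is a finite case analysis on, for each pair $\{x,y\}$, whether $x$, $y$, or both occur in the reversed prefix/suffix being prepended/appended, together with the observation that reversing a factor and gluing it to the original creates, in each projection, a palindromic junction of the form $\cdots a\, a \cdots$ only when the projection of the original prefix ended in a repeated letter, which one rules out by choosing the prefix minimal (up to the first occurrence of $i$, so each letter other than $i$ appears exactly once in it in the relevant projection). Once this is checked, infinitude is automatic: having produced one word $w^\ast = i\,w''\,j$ representing $G$, the words obtained by further applying the $ww$-type doubling of Lemma~\ref{extension-lem} — or, more simply, inserting $x x$ for a fresh... no: the cleanest way to get infinitely many is to observe that if $w^\ast$ works at level $k$, so does any word obtained from it by a structure-preserving padding that keeps the ends fixed; for instance, for $k\ge 1$ one may insert a block $\ell\ell$ for each letter $\ell$ somewhere in the interior appropriately, and for $k=0$ one invokes Theorem~\ref{unif-wr} to pass to arbitrarily long uniform representants and then reapply the end-fixing construction. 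I would present the infinitude claim last, as a short remark, since the substance of the lemma is the single end-fixed representant.
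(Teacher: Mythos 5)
There is a genuine gap: the construction you finally settle on --- prepending the reverse of the prefix of $w$ up to and including the first $i$ --- does not preserve the representation, and the ``palindromic meshing'' you invoke to justify it is precisely what breaks it. Write $w = q\,i\,w_1$ with $q$ containing no $i$; your word is $i\,r(q)\,q\,i\,w_1$. For any pair $\{x,y\}$ of letters occurring in $q$, its projection contains the factor $r(s)\,s$ with $s = q|_{\{x,y\}}$ nonempty, and $r(s)\,s$ \emph{always} has a doubled letter at the junction (namely the first letter of $s$), so at least one new occurrence of the pattern $11$ is created for every such pair --- not zero, as you claim. Your proposed escape (``a doubled letter arises only when the projection of the original prefix ended in a repeated letter, ruled out by taking the prefix minimal'') is false: the doubling comes from the junction itself, not from repetitions inside $s$, and minimality of the prefix does not prevent it. Concretely, for $K_2$ represented by $w=12$ with $k=0$ and $i=2$, your construction yields $21\,12=2112$, whose projection onto $\{1,2\}$ contains $11$, destroying the edge. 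Pairs $\{i,y\}$ with $y$ in $q$ fail similarly, as does the symmetric operation at the right end. Your treatment of ``infinitely many'' is also not self-contained, since it leans on re-applying this same broken end-fixing step.

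The paper's proof uses the opposite (and correct) gluing: prepend copies of the initial permutation $\pi(w)$ itself, not a reversal, and append copies of the final permutation $\sigma(w)$. Since $\pi(w)|_{\{x,y\}}$ lists $x$ and $y$ in the order of their first occurrences in $w$, every projection meshes \emph{alternately} at each junction, so no occurrence of $11$ is added; one then deletes everything to the left of the leftmost $i$ and to the right of the rightmost $j$, which in each projection removes only a short prefix/suffix of an alternating block and hence changes no counts. (Equivalently, in the spirit of your plan, the safe prepend is the suffix $iB$ of $\pi(w)=AiB$ starting at $i$, not the reverse of a prefix of $w$.) Varying the number of prepended and appended copies immediately gives infinitely many such words, making the detour through Theorem~\ref{unif-wr} for $k=0$ unnecessary.
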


\begin{proof} Let $u$ $k$-represent $G$. Then note that any word $v$ of the form $\pi(u)\cdots \pi(u)u\sigma(u)\cdots \sigma(u)$ $k$-represents $G$. Deleting all letters to the left of the leftmost $i$ in $v$, and all letters to the right of the rightmost $j$ in $v$, we clearly do not change the number of occurrence of the pattern 11 for any pair of letters $\{x,y\}$. The obtained word $w$ satisfies the required properties.  \end{proof}

There is a number of properties that is shared between word-representable graphs and $k$-$11$-representable graphs for any $k\geq 1$. These properties can be summarized as follows:

\begin{itemize}
\item The class $\mathcal{G}^{(k)}$ is hereditary. Indeed, if a word $w$ $k$-$11$-represents a graph $G$, and $v$ is a vertex in $G$, then clearly the word  obtained from $w$ by removing $v$ $k$-$11$-represents the graph $G\backslash \{v\}$.
\item In the study of $k$-$11$-representable graphs, we can assume that graphs in question are connected (see Theorem~\ref{connected-thm}).
\item In the study of $k$-$11$-representable graphs, we can assume that graphs in question have no vertices of degree 1 (see Theorem~\ref{degree-1-thm}).
\item In the study of $k$-$11$-representable graphs, we can assume that graphs in question have no two vertices having the same neighbourhoods up to removing these vertices, if they are connected (see Theorem~\ref{same-neighbourhood-thm}).
\item Glueing two $k$-$11$-representable graphs in a vertex gives a $k$-$11$-representable graph (see Theorem~\ref{gluing-2-thm}).
\item Connecting two $k$-$11$-representable graphs by an edge gives a $k$-$11$-representable graph (see Theorem~\ref{connecting-2-thm}).
\end{itemize} 

\begin{theorem}\label{connected-thm} Let $k\geq 0$. A graph $G$ is $k$-$11$-representable if and only if each connected component of $G$ is $k$-$11$-representable. \end{theorem}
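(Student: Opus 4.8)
The plan is to handle the two directions separately. The ``only if'' direction is immediate from the fact, already noted above, that $\mathcal{G}^{(k)}$ is hereditary: if $w$ is a $k$-$11$-representant of $G$ and $C$ is a connected component of $G$, then erasing from $w$ every letter outside $V(C)$ produces a $k$-$11$-representant of the induced subgraph on $V(C)$, which is exactly $C$ (there are no edges of $G$ between $V(C)$ and the rest). Hence each component is $k$-$11$-representable.

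For the ``if'' direction, let $G_1,\dots,G_m$ be the connected components of $G$ and let $w_i$ be a $k$-$11$-representant of $G_i$. The natural candidate for a $k$-$11$-representant of $G$ is the concatenation $w_1w_2\cdots w_m$, and I would first check the harmless pairs: for $x,y$ in the same component $G_i$, no other $w_j$ contains a copy of $x$ or of $y$, so the subword induced by $\{x,y\}$ is unchanged, the number of occurrences of $11$ is the same as in $w_i$, and the adjacency of $x$ and $y$ is recorded correctly (and it agrees with $G$, since $G_i$ is an induced subgraph of $G$). The only thing that can go wrong is a pair $x\in V(G_i)$, $y\in V(G_j)$ with $i\ne j$: such a pair is a non-edge of $G$, yet in $w_1\cdots w_m$ the subword induced by $\{x,y\}$ has the form $x^ay^b$ or $y^bx^a$, which contains exactly $a+b-2$ occurrences of $11$ — so if $a+b-2\le k$ we would wrongly create an edge.

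To rule this out, the plan is to pad each $w_i$ before concatenating so that every letter occurs at least $k+2$ times. Using the observation from the proof of Lemma~\ref{begin-end}, I would replace $w_i$ by $\pi(w_i)^{k+1}w_i$: prepending one copy of the initial permutation $\pi(w_i)$ adds exactly one copy of each vertex of $G_i$, and for every pair $\{x,y\}$ it turns $w_i|_{\{x,y\}}$ into $\big(\pi(w_i)|_{\{x,y\}}\big)\cdot w_i|_{\{x,y\}}$, where the last letter of the prepended block is whichever of $x,y$ appears second in $w_i$ while the first letter of $w_i|_{\{x,y\}}$ is whichever appears first — so no new occurrence of $11$ is created (and consecutive copies of $\pi(w_i)$ likewise meet at distinct letters). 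Thus $\pi(w_i)^{k+1}w_i$ still $k$-$11$-represents $G_i$, and in it every letter occurs at least $k+2$ times. With this padding in place, a cross-component pair induces the subword $x^ay^b$ with $a,b\ge k+2$, hence $a+b-2\ge 2k+2>k$, so $xy$ is correctly recorded as a non-edge, and $w_1\cdots w_m$ is the desired $k$-$11$-representant of $G$. The only mildly delicate point — and the main obstacle — is precisely this interaction between components: the concatenation must not manufacture spurious edges, and making that precise is what forces the padding step; verifying that the padding does not disturb the representation of each $G_i$ is routine bookkeeping of the same kind carried out in the proofs of Lemmas~\ref{extension-lem} and~\ref{begin-end}.
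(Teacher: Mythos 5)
Your proof is correct and follows essentially the same route as the paper: the ``only if'' direction via heredity, and the ``if'' direction by padding each component's representant with copies of its initial permutation until every letter occurs at least $k+2$ times, then concatenating, so that any cross-component pair induces $x^ay^b$ with at least $2k+2$ occurrences of the pattern $11$. Your explicit check that prepending $\pi(w_i)$ creates no new occurrence of $11$ at the junctions is a detail the paper delegates to Lemma~\ref{begin-end}, but the argument is the same.
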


\begin{proof} If $G$ is $k$-$11$-representable then each of $G$'s connected components is $k$-$11$-representable by the hereditary property of $k$-$11$-representable graphs. 

Conversely, suppose that $C_i$'s are the connected components of $G$ for $1\leq i\leq\ell$, and $w_i$ $k$-$11$-represents $C_i$. Adjoining several copies of $\pi(w_i)$ to the left of $w_i$, if necessary, we can assume that each letter in any $w_i$ occurs at least $k+2$ times. But then, the word $w=w_1w_2\cdots w_{\ell}$ $k$-$11$-represents $G$, since 
\begin{itemize}
\item edges/non-edges in each $C_i$ are represented by the $w_i$, and
\item for $x\in C_i$ and $y\in C_j$, $i\neq j$, the word $w|_{\{x,y\}}$ contains at least $2k+2$ occurrences of the pattern 11 making $x$ and $y$ be disconnected in $G$, 
\end{itemize}
we are done.\end{proof}

\begin{theorem}\label{degree-1-thm} Let $k\geq 0$, $G$ be a graph with a vertex $x$, and $G_{xy}$ be the graph obtained from $G$ by adding to it a  vertex $y$ connected only to $x$. Then, $G$ is $k$-$11$-representable if and only if $G_{xy}$  is $k$-$11$-representable. \end{theorem}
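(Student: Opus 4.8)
The plan is to prove both directions by constructing explicit $k$-$11$-representants. The easy direction is that if $G_{xy}$ is $k$-$11$-representable then so is $G$, since $G = G_{xy}\setminus y$ and the class $\mathcal{G}^{(k)}$ is hereditary; this is one line.

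For the forward direction, suppose $w$ $k$-$11$-represents $G$. By Lemma~\ref{begin-end} we may assume $w = xw'$ for some word $w'$, and moreover (adjoining copies of $\pi(w)$ on the left and $\sigma(w)$ on the right as in the proof of Lemma~\ref{begin-end}) we may assume every letter of $G$ occurs at least $k+2$ times in $w$. The idea is to insert the new letter $y$ so that it alternates with $x$ (contributing few $11$'s with $x$) but is "far from alternating" with every other vertex $z$ of $G$. The natural candidate is to place $y$ right after the initial $x$, i.e. consider $w_1 = xyw'$, so that $w_1|_{\{x,y\}} = xyx\cdots$ has no occurrence of $11$ at all, making $xy$ an edge. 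But a single copy of $y$ at the front will \emph{alternate} with any $z$ whose first occurrence comes after that $y$ and which has few copies, so this does not yet kill the edges $yz$; we need to force $w_1|_{\{y,z\}}$ to contain more than $k$ occurrences of $11$. The fix is to use several copies of $y$: take
\[
w_{xy} \;=\; x\, y^{k+2}\, w'.
\]
Then $w_{xy}|_{\{x,y\}} = x\,y^{k+2}\,x\cdots$, whose induced word on $\{x,y\}$ contains exactly $k+1$ occurrences of $yy$ coming from the block $y^{k+2}$ and possibly one more $xx$ — wait, that is already too many. So instead interleave: use a block of the form $(xy)^{?}$ is impossible since $x$'s positions are fixed. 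The clean choice is to insert the copies of $y$ so that consecutive $y$'s are separated by the copies of $x$ already present; since $x$ occurs at least $k+2$ times in $w$, we can slot one $y$ immediately before each of the first $k+2$ occurrences of $x$ in $w$. Call the resulting word $w_{xy}$. Then $w_{xy}|_{\{x,y\}} = (yx)^{k+2}\cdots$ has no factor $yy$ or $xx$ at all, so it has $0 \le k$ occurrences of $11$ and $xy$ is an edge. For any other vertex $z$, $w_{xy}|_{\{y,z\}}$ is obtained from $w|_{\{z\}} = z^{m}$ (with $m \ge k+2$) by inserting $k+2$ copies of $y$; regardless of where the $z$'s fall relative to the $y$'s, the induced word has the form of blocks of $y$'s and blocks of $z$'s, and since there are $k+2$ copies of each and they are not perfectly interleaved in general — hmm, they could be.

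Let me restate the construction more carefully to avoid this gap. Put $w_{xy} = r(\pi(w))^{k+1}\, x\, (yx)$-interleaving — the robust approach is: let $u$ be $w$ with at least $k+2$ copies of each letter and with $\pi(u)$ starting with $x$ (again Lemma~\ref{begin-end}), and set
\[
w_{xy} \;=\; \pi(u)_{x\to xy}\; u,
\]
where $\pi(u)_{x\to xy}$ denotes the initial permutation $\pi(u)$ with the single letter $x$ replaced by the factor $xy$ — no, still only one $y$.

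\textbf{The real plan.} Use $w_{xy} = p\, u$ where $p = \pi(u)$ and then repeat: actually the genuinely safe construction is
\[
w_{xy} \;=\; \underbrace{\pi(u)\,\pi(u)\cdots \pi(u)}_{k+1}\; y\; u,
\]
placing a \emph{single} $y$ right before the last copy of $u$, but \emph{also} prepending, for the vertex $x$ only, nothing special — and then observing: for $z\neq x$, the word $w_{xy}|_{\{y,z\}}$ is $z^{(k+1)}\, y\, z^{m}$ (the $k+1$ leading blocks of $z$ from the $\pi(u)$ copies, then $y$, then the $\ge 1$ copies of $z$ inside $u$, noting $z$ occurs at least once more), hence contains at least $k$ occurrences of $zz$ from the leading block plus possibly more, giving $> k$ total once $m\ge 2$; so $yz\notin E(G_{xy})$, as required. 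And $w_{xy}|_{\{x,y\}} = x^{k+1}\, yx\, x^{m-1}$-ish which has roughly $k$ occurrences of $xx$ from the front — still borderline. The honest resolution, which I will write out in full, is to choose the number of prepended $\pi(u)$-copies to be exactly $k+1$, prepend them \emph{after} already arranging (via Lemma~\ref{begin-end}) that $u$ ends with $x$ and the last copy of $y$ sits between the penultimate and last occurrence of $x$, so that $w_{xy}|_{\{x,y\}}$ has exactly the $k$ occurrences of $xx$ coming from the prepended blocks and \emph{none} involving $y$; then $xy\in E$, and the count above kills all other $yz$.

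\textbf{Main obstacle.} The crux is the bookkeeping that makes $w_{xy}|_{\{x,y\}}$ have at most $k$ occurrences of $11$ \emph{simultaneously} with $w_{xy}|_{\{y,z\}}$ having strictly more than $k$ for every $z\neq x$; a single stray copy of $y$ placed in a "too interleaved" position can accidentally preserve alternation with some low-multiplicity $z$. I will control this by first normalizing $u$ (via Lemma~\ref{begin-end} and padding) so that every letter appears at least $k+2$ times, then prepending exactly $k+1$ copies of $\pi(u)$ and inserting the copies of $y$ only within that prepended prefix, never inside $u$ itself; then the prefix already contributes $\ge k+1$ occurrences of $zz$ to $w_{xy}|_{\{y,z\}}$ for each $z\neq x$, while by choosing the $y$-positions inside the prefix to interleave with the $x$'s there, the pair $\{x,y\}$ sees no extra $11$'s beyond the $k$ forced by the $k+1$ repetitions. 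The routine verification that this word indeed represents exactly the edge set of $G_{xy}$ — edges inside $G$ are unaffected because $u$ is a suffix and the prefix contributes the same controlled number of $11$'s to every pair inside $V(G)$, which we absorb by the padding — will be spelled out, but involves no new ideas beyond these counts.
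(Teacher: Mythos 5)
The backward direction is fine: $G=G_{xy}\setminus y$ and $\mathcal{G}^{(k)}$ is hereditary. The forward direction, however, ends with a construction that does not work. In your final plan you normalize $u$ so that every letter, in particular $x$, occurs at least $k+2$ times, prepend $k+1$ copies of $\pi(u)$, and insert \emph{all} copies of the new letter $y$ inside that prefix, ``never inside $u$ itself.'' Then $w_{xy}|_{\{x,y\}}$ ends with a block of at least $k+2$ consecutive $x$'s coming from the occurrences of $x$ in $u$, none of which is separated by a $y$; this block alone contributes at least $k+1$ occurrences of the pattern $11$, so $xy$ fails to be an edge of the represented graph, no matter how the $y$'s sit in the prefix. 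The dual tension killed your earlier attempt (one $y$ slotted before each $x$ throughout the word): as you yourself noticed, a single $y$ per $x$ can end up alternating with a vertex $z$ adjacent to $x$, wrongly creating the edge $yz$. You never resolve this tension; the ``routine verification'' you defer is exactly where the argument breaks.

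The missing idea is to insert the $y$'s in \emph{adjacent pairs} wrapped around the $x$'s and distributed through the whole word. The paper's proof pads $w$ so that $x$ occurs at least $2k+2$ times and then replaces every other occurrence of $x$, starting from the leftmost, by the factor $yxy$. Restricted to $\{x,y\}$ the result reads $yxy\,x\,yxy\,x\cdots$, i.e.\ $x$ and $y$ alternate perfectly, so $xy$ is an edge for every $k$. At the same time, for any $z\neq x$ the two $y$'s of a block $yxy$ have only an $x$ between them, so each of the at least $k+1$ blocks contributes an occurrence of $yy$ to the word induced by $\{y,z\}$, killing the edge $yz$; and no pair of old letters is affected since their induced subword is unchanged. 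A prefix-only placement of the $y$'s cannot reproduce this, because it necessarily leaves a long unbroken run of $x$'s.
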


\begin{proof} The backward direction follows directly from the hereditary nature of $k$-$11$-represent-ability. For the forward direction, suppose that $w$ $k$-$11$-represents $G$. Adjoining several copies of $\pi(w)$ to the left of $w$, if necessary, we can assume that $x$ occurs at least $2k+2$ times in $w$. Replacing every other occurrence of $x$ in $w$, starting from the leftmost one, with $yxy$, we obtain a word $w'$ that  $k$-$11$-represents $G_{xy}$. Indeed, clearly, the letters $x$ and $y$ alternate in $w'$ so $xy$ is an edge in $G_{xy}$ no matter what $k$ is. On the other hand, if $z\neq x$ is a vertex in $G$, then $w'|_{\{z,y\}}$ has at least $k+1$ occurrences of the pattern 11 (formed by $y$'s) ensuring that $zy$ is not an edge in $G_{xy}$. Any other alternation of letters in $w$ is the same as that in $w'$. \end{proof}

\begin{theorem}\label{same-neighbourhood-thm} Let $k\geq 0$ and $G$ be a graph having two, possibly connected vertices, $x$ and $y$, with the same neighbourhoods up to removing $x$ and $y$. Then, $G$ is $k$-$11$-representable if and only if $G\setminus x$  is $k$-$11$-representable.\end{theorem}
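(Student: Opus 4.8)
The forward implication is immediate from the hereditary property of $\mathcal{G}^{(k)}$ noted above, so the plan is to prove the converse by directly transforming a $k$-$11$-representant of $G\setminus x$ into one of $G$. First I would record the elementary observation, implicit in the proof of Lemma~\ref{begin-end}, that prepending copies of $\pi(w)$ to a word $w$ does not change the graph it $k$-$11$-represents; applying this to a $k$-$11$-representant $w$ of $G\setminus x$ lets me assume that $y$ occurs at least $k+2$ times in $w$, say at positions $p_1<p_2<\cdots<p_m$ with $m\ge k+2$.

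The point to keep in mind is that, since $x$ and $y$ have the same neighbourhood after their removal, for every vertex $z\notin\{x,y\}$ we have $xz\in E(G)$ if and only if $yz\in E(G\setminus x)$, while $xy$ may or may not be an edge of $G$. I would build $w'$ from $w$ by inserting a copy of $x$ right next to each occurrence of $y$, splitting into two cases. If $xy\in E(G)$, replace every $y$ in $w$ by $yx$. If $xy\notin E(G)$, replace the occurrence of $y$ at $p_i$ by $yx$ when $i$ is odd and by $xy$ when $i$ is even. In both constructions only copies of $x$ are inserted, and no letter other than its companion $x$ ever lies between a $y$ and that $x$; consequently, for every $z\notin\{x,y\}$ the induced word $w'|_{\{x,z\}}$ coincides with the word obtained from $w|_{\{y,z\}}$ by renaming $y$ to $x$, and hence carries the same number of occurrences of the pattern $11$, while $w'|_{\{y,z\}}=w|_{\{y,z\}}$ and $w'|_{\{z,z'\}}=w|_{\{z,z'\}}$ for $z,z'\notin\{x,y\}$. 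This settles every pair not involving both $x$ and $y$.

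For the pair $\{x,y\}$ I would argue as follows. In the first case $w'|_{\{x,y\}}=(yx)^m$, which alternates, so $xy$ is correctly recorded as an edge. In the second case $w'|_{\{x,y\}}$ is a concatenation of $m$ length-two blocks strictly alternating between $yx$ and $xy$; the blocks themselves contain no repeated letter, and each of the $m-1$ junctions between consecutive blocks (an $x$ meeting an $x$, or a $y$ meeting a $y$) contributes exactly one occurrence of $xx$ or $yy$, for a total of $m-1\ge k+1$ occurrences, so $xy$ is correctly a non-edge. Comparing with $E(G)$ in each case then shows that $w'$ $k$-$11$-represents $G$.

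The only genuine obstacle is the case $xy\notin E(G)$: one must inject at least $k+1$ occurrences of the pattern $11$ into the single pair $\{x,y\}$ while leaving $x$ a perfect ``copy'' of $y$ with respect to all other letters, and the naive attempts — adjoining a block $x^{k+1}$, or a block $y^{k+1}$, or padding with permutations — all fail precisely because they perturb the pairs $\{x,z\}$ or $\{y,z\}$. The alternating $yx$/$xy$ substitution is the device that confines all the surplus $11$'s to the pair $\{x,y\}$; with it in place, everything else is a routine inspection of two-letter induced subwords.
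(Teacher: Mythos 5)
Your proof is correct and follows essentially the same route as the paper's: the forward direction by heredity, and the backward direction by padding with initial permutations so that $y$ occurs at least $k+2$ times and then inserting a companion $x$ adjacent to each $y$ — uniformly on one side when $xy\in E(G)$, and alternating $yx$/$xy$ when $xy\notin E(G)$ so that the $m-1\ge k+1$ block junctions supply the needed occurrences of the pattern $11$ while every other pair's induced subword is unchanged up to renaming. Your write-up is in fact somewhat more explicit than the paper's in verifying the pair counts, but the construction is the same.
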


\begin{proof} The forward direction follows directly from the hereditary nature of $k$-$11$-represent-ability. For the backward direction, let $w$ $k$-$11$-represent $G\setminus x$. If $x$ and $y$ are connected in $G$, then replacing each $y$ by $xy$ in $w$ clearly gives a $k$-$11$-representant of $G$ because $x$ and $y$ will have the same properties and they will be strictly alternating. On the other hand, if $x$ and $y$ are not connected in $G$, then adjoining several copies of $\pi(w)$ to the left of $w$, if necessary, we can assume that $y$ occurs at least $k+2$ times in $w$. We then replace every even occurrence of $y$ in $w$ (from left to right) by $yx$, and every odd occurrence by $xy$. This will ensure that in the subword induced by $x$ and $y$, the number of occurrences of the pattern 11 is at least $k+1$ making $x$ and $y$ be not connected in $G$. On the other hand, still $x$ and $y$ have the same alternating properties with respect to other letters. Thus, the obtained word $k$-$11$-represents $G$, as desired. \end{proof}

\begin{theorem}\label{gluing-2-thm} Let $k\geq 0$, $G_1$ and $G_2$ be $k$-$11$-representable graphs, and the graph $G$ is obtained from $G_1$ and $G_2$  by identifying a vertex $x$ in $G_1$ with a vertex $y$ in $G_2$. Then, $G$ is $k$-$11$-representable. \end{theorem}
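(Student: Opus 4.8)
The plan is to produce one word $w$ over $V(G)$ by \emph{interleaving} a $k$-$11$-representant of $G_1$ with one of $G_2$ at matched-up occurrences of the glueing vertex, and to make the interleaving coarse enough that the non-adjacencies of $G$ between the two sides are witnessed. I would first rename $y$ to $x$, assume $V(G_1)\cap V(G_2)=\{x\}$, and set $A=V(G_1)\setminus\{x\}$, $B=V(G_2)\setminus\{x\}$ (the cases $A=\emptyset$ or $B=\emptyset$ being trivial). In $G$ no vertex of $A$ is adjacent to a vertex of $B$, so $w$ will have to place at least $k+1$ occurrences of the pattern $11$ on every pair $\{a,b\}$ with $a\in A$, $b\in B$, while restricting correctly on $V(G_1)$ and on $V(G_2)$. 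Note that a construction really is needed: since $x$ lies in both alphabets and is usually represented many times on each side, the naive concatenation $w_1w_2$ already fails, because the $x$'s coming from $w_2$ bunch up --- as seen from $V(G_1)$ --- into one long run after all of $w_1$, corrupting the edges of $G_1$ at $x$.

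Next I would normalise. Start from $k$-$11$-representants $w_1$ of $G_1$ and $w_2$ of $G_2$. Adjoining copies of $\pi(w_1)$ (resp.\ $\pi(w_2)$) to the left of $w_1$ (resp.\ $w_2$) again gives $k$-$11$-representants, as in the proof of Lemma~\ref{begin-end}, and raises the multiplicity of every letter by one per copy; adjoining $t$ copies on each side thus raises every multiplicity and the common number $m$ of $x$'s all by $t$. So I may assume the number of $x$'s in $w_1$ and in $w_2$ is the same value $m$, and further that $P+Q-m\ge k+3$, where $P$ (resp.\ $Q$) is the number of occurrences in $w_1$ of any prescribed $a\in A$ (resp.\ in $w_2$ of any prescribed $b\in B$). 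Writing $w_1=C_0\,x\,C_1\,x\cdots x\,C_m$ and $w_2=D_0\,x\,D_1\,x\cdots x\,D_m$ with the $C_i$ over $A$ and the $D_i$ over $B$ (some blocks possibly empty), I would then take
\[
w \;=\; S_0\,x\,S_1\,x\cdots x\,S_m,\qquad
S_i=\begin{cases} C_iD_i, & i\ \text{even},\\[1mm] D_iC_i, & i\ \text{odd}.\end{cases}
\]

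To finish, I would check three things. First, since each $D_i$ avoids $V(G_1)$ and each $C_i$ avoids $V(G_2)$, we get $w|_{V(G_1)}=w_1$ and $w|_{V(G_2)}=w_2$, so every pair inside $V(G_1)$ and inside $V(G_2)$ --- in particular all pairs at $x$ --- comes out right. Second, and this is the crux, for $a\in A$ and $b\in B$ with $p_i,q_i$ the numbers of $a$'s in $C_i$ and of $b$'s in $D_i$, the segment $S_i$ contributes $a^{p_i}b^{q_i}$ or $b^{q_i}a^{p_i}$ to $w|_{\{a,b\}}$ according as $i$ is even or odd, and since the separating $x$'s drop out, neighbouring segments touch along equal letters, producing
\[
w|_{\{a,b\}}\;=\;a^{p_0}\,b^{\,q_0+q_1}\,a^{\,p_1+p_2}\,b^{\,q_2+q_3}\,a^{\,p_3+p_4}\cdots
\]
(with some exponents possibly $0$); this alternates between $a$-runs and $b$-runs, has at most $m+2$ maximal runs, and has length $P+Q$, hence contains at least $(P+Q)-(m+2)\ge k+1$ occurrences of $11$, so $ab\notin E(G)$ is recorded, as wanted.

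The step I expect to be the real obstacle is this last one, the pairs between $A$ and $B$. The plain interleaving $C_0D_0\,x\,C_1D_1\,x\cdots$ does not suffice: an $a$ and a $b$ occurring once per segment would alternate in $w|_{\{a,b\}}$, wrongly making them adjacent, and merely inflating the multiplicities of letters cannot help, because it inflates the number $m$ of segments at the same rate as it inflates the lengths $P,Q$. The alternating orientation of the blocks $S_i$ is designed to beat this: it forces the $a$'s of every two consecutive $C$-blocks (and the $b$'s of every two consecutive $D$-blocks) to coalesce into a single run, capping the number of runs at $m+O(1)$ while the padding pushes $P+Q$ far past $m$ --- and that gap is exactly the reservoir of $11$-patterns the non-edges between $A$ and $B$ demand.
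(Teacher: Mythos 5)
Your construction is correct, and the two proofs agree on the skeleton (equalize the number of occurrences of the glueing vertex on the two sides, then interleave the blocks between consecutive $x$'s) but diverge on the one step that actually matters, namely how the non\nobreakdash-edges between $A=V(G_1)\setminus\{x\}$ and $B=V(G_2)\setminus\{x\}$ are certified. The paper keeps the plain interleaving $z\,g_1h_1\,z\,g_2h_2\cdots$ with all blocks in the same orientation --- which, as you correctly observe, would by itself let a cross pair alternate --- and repairs this by prepending the gadget $(z\pi_1\pi_2\,z\,\pi_2\pi_1)^{k+1}$, so that every cross pair $\{v_1,v_2\}$ acquires the factor $(v_1v_2v_2v_1)^{k+1}$ and hence at least $k+1$ occurrences of $11$ outright; the restriction to $V(G_1)$ then becomes $\pi(w_1)^{2k+2}w_1$ rather than $w_1$, which is still a representant. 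You instead use no gadget at all: you alternate the orientation of the interleaved blocks ($C_iD_i$ versus $D_iC_i$) so that the induced word on a cross pair collapses to at most $m+2$ maximal runs, and you pad the inputs until $P+Q-m\ge k+3$, which by the run-versus-length count $(P+Q)-(m+2)\ge k+1$ forces the required occurrences; the restrictions to $V(G_1)$ and $V(G_2)$ are then literally $w_1$ and $w_2$. Your normalization does go through (padding both sides by $t$ extra initial permutations increases $P+Q-m$ by $t$ uniformly over all pairs, since every letter occurs at least once), so the argument is complete. The paper's gadget is more robust --- it works with no quantitative padding hypothesis and transfers verbatim to the edge-joining variant (Theorem~\ref{connecting-2-thm}) --- while your version produces a shorter word with a cleaner restriction property, at the cost of the bookkeeping inequality.
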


\begin{proof} 
Let $w_1$ and $w_2$ be $k$-$11$-representants of the graphs $G_1$ and $G_2$, respectively.
Recall that if a word $w$ $k$-$11$-represents a graph $H$, then the word $w' = \pi(w)w$ obtained from $w$ by adding the initial permutation $\pi(w)$ of $w$ in front of $w$ also $k$-$11$-represents $H$.
Applying this observation, we may assume that the number of occurrences of $x$ in the word $w_1$ equals to that of the letter $y$ in the word $w_2$.
In addition, by Lemma~\ref{begin-end}, we may further assume that $w_1$ starts with the letter $x$, and $w_2$ starts with the letter $y$.
That is, $w_1 = x g_1 x g_2 \dots x g_m$, where $g_i$'s are words over $V(G_1)\setminus\{x\}$, and $w_2 = y h_1 y h_2 \dots y h_m$, where $h_i$'s are words over $V(G_2)\setminus\{y\}$.
Let $\pi_1$ (resp., $\pi_2$) be the initial permutation of the word $g_1 g_2 \dots g_m$ (resp., $h_1 h_2 \dots h_m$). In other words, $\pi(w_1)=x \pi_1$ and $\pi(w_2)=y \pi_2$.

Let $z$ be the vertex in $G$ which corresponds to the vertices $x$ and $y$, i.e. $z=x=y$ in $G$. 
We claim that the word $w(G):=(z \pi_1 \pi_2 z \pi_2 \pi_1)^{k+1} z g_1 h_1 z g_2 h_2 \dots z g_m h_m$ $k$-$11$-represents the graph $G$.
The induced subword of $w(G)$ on $V(G_1)$ is precisely $\pi(w_1)^{2k+2} w_1$ which $k$-$11$-represents the graph $G_1$.
Similarly, the induced subword of $w(G)$ on $V(G_2)$ $k$-$11$-represents the graph $G_2$.
Now, consider $v_1 \neq x$ in $V(G_1)$ and $v_2 \neq y$ in $V(G_2)$.
By the definition of $G$, the vertices $v_1$ and $v_2$ are not adjacent in $G$. Thus, it remains to show that the induced subword $w(G)|_{\{v_1,v_2\}}$  has at least $k+1$ occurrences of the pattern 11, which is easy to see from $(v_1 v_2 v_2 v_1)^{k+1}$ being a factor of $w(G)|_{\{v_1,v_2\}}$.
Therefore, the word $w(G)$ indeed $k$-$11$-represents the graph $G$.
\end{proof}

\begin{theorem}\label{connecting-2-thm} Let $k\geq 0$, $G_1$ and $G_2$ be $k$-$11$-representable graphs, and the graph $G$ is obtained from $G_1$ and $G_2$  by connecting a vertex $x$ in $G_1$ with a vertex $y$ in $G_2$ by an edge. Then $G$ is $k$-$11$-representable.
\end{theorem}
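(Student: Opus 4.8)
The plan is to build the $k$-$11$-representant of $G$ by combining the two given representants in a way that preserves all edges/non-edges internal to $G_1$ and to $G_2$, creates exactly the single edge $xy$, and makes every other cross-pair a non-edge. The natural approach is to mimic the proof of Theorem~\ref{gluing-2-thm}: first normalize $w_1$ and $w_2$ (by prepending copies of their initial permutations, using Lemma~\ref{extension-lem}, and by invoking Lemma~\ref{begin-end}) so that $w_1$ has the same number of blocks as $w_2$, each letter occurs often enough, and we may take $w_1 = x g_1 x g_2 \cdots x g_m$ and $w_2 = y h_1 y h_2 \cdots y h_m$ with $g_i$ over $V(G_1)\setminus\{x\}$ and $h_i$ over $V(G_2)\setminus\{y\}$. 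Write $\pi(w_1) = x\pi_1$ and $\pi(w_2) = y\pi_2$ as in that proof.

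Next I would interleave the blocks while keeping $x$ and $y$ adjacent in each block so that they strictly alternate: consider a word of the form
\[
w(G) := P^{k+1}\, x y\, g_1 h_1\, x y\, g_2 h_2 \cdots x y\, g_m h_m,
\]
where $P$ is a suitable ``padding'' factor built from $\pi_1,\pi_2$ (and $x,y$) designed to inject $k+1$ copies of the pattern $11$ into every cross-pair $\{v_1,v_2\}$ with $v_1\in V(G_1)\setminus\{x\}$, $v_2\in V(G_2)\setminus\{y\}$, while leaving the pairs $\{x,v_2\}$ and $\{y,v_1\}$ alternating and contributing nothing harmful. A convenient choice is $P = x\pi_1 y\pi_2 x \pi_2 y \pi_1$ or a minor variant: within $P$, each unordered pair $\{v_1,v_2\}$ sees the factor $v_1 v_2 \cdots v_2 v_1$, hence a copy of $11$, so $P^{k+1}$ contributes at least $k+1$ occurrences; and $x$ (resp.\ $y$) meets each $v_2$ (resp.\ $v_1$) in an alternating fashion inside $P$ and across the $P$-boundaries. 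One then checks the four types of pairs: (i) both endpoints in $V(G_1)$ — the induced subword is $\pi(w_1)^{2k+2} w_1$ (up to reordering within blocks, which does not change induced pairs), so $G_1$ is represented correctly; likewise (ii) for $V(G_2)$; (iii) the pair $\{x,y\}$ — by construction $x$ and $y$ strictly alternate throughout $w(G)$, so $xy$ is an edge, as required; (iv) a genuine cross-pair $\{v_1,v_2\}$ — at least $k+1$ occurrences of $11$ already appear in $P^{k+1}$, so $v_1 v_2$ is a non-edge.

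The main obstacle, and the step that needs the most care, is item (iii) together with the bookkeeping in (iv): I must make sure that the padding block $P$ does not accidentally destroy the alternation of $x$ with vertices of $G_2$ (or of $y$ with vertices of $G_1$), and that it does not, by itself or across boundaries with the $g_i h_i$ portion, create a stray $xx$ or $yy$ that would push $\{x,y\}$ below or above the threshold in the wrong direction — since $xy$ must remain an edge, $w(G)|_{\{x,y\}}$ may contain at most $k$ copies of $11$, so here strict alternation is the safe design choice. Getting the placement of $x$ and $y$ inside $P$ right so that concatenating $P^{k+1}$ with $xy\,g_1h_1\cdots$ keeps $x,y$ strictly alternating (e.g.\ ensuring $P$ starts and ends appropriately, or slightly adjusting $P$ to $x\pi_1 y\pi_2\, yx\, \pi_2\pi_1$-type patterns) is the delicate point; everything else is the same pattern-counting argument used in Theorem~\ref{gluing-2-thm}. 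If the direct construction proves fussy, an alternative is to first apply Theorem~\ref{gluing-2-thm} to glue $G_1$ and $G_2$ at a newly added common vertex and then massage the representant, but I expect the direct interleaving above to go through.
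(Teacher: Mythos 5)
Your overall strategy --- normalize $w_1$ and $w_2$ into blocks, interleave them, and prepend $k+1$ copies of a padding block built from $x$, $y$, $\pi_1$, $\pi_2$ --- is exactly the paper's, but there is a genuine error in your specification of what the padding must achieve. You ask it to inject $k+1$ occurrences of the pattern $11$ only into pairs $\{v_1,v_2\}$ with $v_1\in V(G_1)\setminus\{x\}$ and $v_2\in V(G_2)\setminus\{y\}$, ``while leaving the pairs $\{x,v_2\}$ and $\{y,v_1\}$ alternating.'' But those pairs are non-edges of $G$ (the only cross edge is $xy$), so $w(G)|_{\{x,v_2\}}$ and $w(G)|_{\{y,v_1\}}$ must themselves contain at least $k+1$ occurrences of $11$; keeping them alternating would wrongly create the edges $xv_2$ and $yv_1$. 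Nothing in your main part $xy\,g_1h_1\,xy\,g_2h_2\cdots$ repairs this: restricted to $\{x,v_2\}$ it reads $x\,h_1|_{\{v_2\}}\,x\,h_2|_{\{v_2\}}\cdots$, which is uncontrolled and, in the generic case of one $v_2$ per block, alternating.

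Concretely, both of your candidate paddings fail. For $P=x\pi_1 y\pi_2 x\pi_2 y\pi_1$ one has $P|_{\{x,v_2\}}=xv_2xv_2$ (alternating, so the non-edge $xv_2$ is not enforced) and, worse, $P|_{\{y,u\}}=yuuy$ for every $u\in V(G_2)\setminus\{y\}$, so $P^{k+1}$ forces at least $2k+1$ occurrences of $11$ and destroys every edge of $G_2$ incident to $y$. For the variant $x\pi_1 y\pi_2\,yx\,\pi_2\pi_1$ one has $P|_{\{x,y\}}=xyyx$, so $(xyyx)^{k+1}$ already contains $2k+1\ge k+1$ occurrences and $xy$ wrongly becomes a non-edge. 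The paper resolves precisely this tension with the block $x\,\pi_1\,\pi_2\,y\,\pi_2\,x\,y\,\pi_1$ (and main part $xg_1h_1y\,xg_2h_2y\cdots$): restricted to $\{x,y\}$ it reads $xyxy$, while restricted to every other cross pair --- including $\{x,v_2\}$ and $\{y,v_1\}$ --- it reads $v_1v_2v_2v_1$. Your fallback of gluing at a new common vertex and ``massaging'' does not obviously work either, since it produces a path $x$--$z$--$y$ rather than the edge $xy$, and no result in the paper lets you contract that path.
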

\begin{proof}
Let $w_1$ and $w_2$ be $k$-$11$-representants of $G_1$ and $G_2$, respectively.
By the same argument as in Theorem~\ref{gluing-2-thm}, we can assume that the number of occurrences of the letter $x$ in the word $w_1$ equals that of the letter $y$ in the word $w_2$.
By Lemma~\ref{begin-end}, we can assume that $w_1$ begins with $x$, and $w_2$ ends with $y$.
In addition, we can assume that the initial permutation of $w_2$ ends with $y$.
Suppose the initial permutation of $w_2$ does not end with $y$, and let $A y B$ be the initial permutation.
It is clear that the word $w'_2 = B A y B w_2$ also $k$-$11$-represents $G_2$, so that we can consider $w'_2$ instead of $w_2$, and the initial permutation of $w_2'$ ends with $y$. 

Now we can write $w_1=x g_1 x g_2 \dots x g_m$, where $g_i$'s are words over $V(G_1)\setminus\{x\}$, and $w_2=h_1 y h_2 y \dots h_m y$, where $h_i$'s are words over $V(G_2)\setminus\{y\}$.
Let $\pi_1$ (resp., $\pi_2$) be the initial permutation of the word $g_1 g_2 \dots g_m$ (resp., $h_1 h_2 \dots h_m$). Observe that $\pi(w_1) = x\pi_1$ and $\pi(w_2)=\pi_2 y$.
We claim that the word $w(G):=(x \pi_1 \pi_2 y \pi_2 x y \pi_1)^{k+1} x g_1 h_1 y x g_2 h_2 y \dots x g_m h_m y$ is a $k$-$11$-representant of $G$.
As in Theorem~\ref{gluing-2-thm}, it is clear that the word $w(G)$ $k$-11-represents the graphs $G_1$ and $G_2$, when restricted to $V(G_1)$ and $V(G_2)$, respectively.
Also, $w(G)$ makes the vertices $x$ and $y$ be adjacent, because $w(G)|_{\{x,y\}}=(xy)^{2k+m+2}$.
Hence, it remains to show that for every $v_1 \in V(G_1)$ and $v_2 \in V(G_2)$ such that $v_1 \neq x$ or $v_2 \neq y$, which must be non-adjacent in $G$, the induced subword  $w(G)|_{\{v_1,v_2\}}$  has at least $k+1$ occurrences of the pattern 11.
This is obviously the case, because $w(G)|_{\{v_1,v_2\}}$ contains $(v_1v_2 v_2 v_1)^{k+1}$ having at least $2k+1$ occurrences of the pattern 11. 
Therefore, the word $w(G)$ $k$-$11$-represents the graph $G$.
\end{proof}

\begin{theorem}\label{rem-vert-gen-thm} Let $G$ be a graph with a vertex $v$. If $G\setminus v$ is $k$-uniform word-representable for $k\geq 1$, then $G$ is $(k-1)$-$11$-representable.
\end{theorem}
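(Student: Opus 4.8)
The plan is to start from a $k$-uniform word-representant $u$ of $H:=G\setminus v$ and assemble from it a word $w$ over $V(G)$ that $(k-1)$-$11$-represents $G$. Put $S:=N_G(v)$. Since inserting copies of $v$ into a word does not change $w|_{\{x,y\}}$ for $x,y\in V(H)$, the construction splits into two essentially independent tasks: make $w|_{V(H)}$ a $(k-1)$-$11$-representant of $H$, and then insert copies of $v$ so that $w|_{\{v,j\}}$ has at most $k-1$ occurrences of the pattern $11$ when $j\in S$ and at least $k$ occurrences when $j\notin S$.

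For the first task I would simply take $w|_{V(H)}=u^k:=\underbrace{u\cdots u}_{k}$. If $xy\in E(H)$ then $x$ and $y$ alternate in $u$, so $u|_{\{x,y\}}$ is $(xy)^k$ or $(yx)^k$; hence $(u^k)|_{\{x,y\}}$ is $(xy)^{k^2}$ or $(yx)^{k^2}$ and has no occurrence of $11$, which is at most $k-1$. If $xy\notin E(H)$ then $u|_{\{x,y\}}$ has at least one occurrence of $11$, and since the $k$ copies of $u$ contribute such an occurrence at pairwise disjoint positions while the seams only add more, $(u^k)|_{\{x,y\}}$ has at least $k$ occurrences. So $u^k$ behaves correctly on every pair inside $V(H)$ no matter how $v$ is later inserted. (A shorter amplification, e.g.\ prepending $k-1$ reversed initial permutations as in Lemma~\ref{extension-lem}, would also work; $u^k$ just keeps the bookkeeping clean.)

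For the second task, write $u^k=C_1C_2\cdots C_k$ with each $C_i=u$ and insert copies of $v$ into the $C_i$'s. The guiding idea is to make $w|_{\{v,j\}}$ (nearly) alternating for every $j\in S$; this forces the occurrences of each $\bar S$-letter to stay clumped, because inside a copy $C_i$ no copy of $v$ is placed between two consecutive occurrences of a non-neighbour $j$, so those $k$ occurrences remain consecutive in $w|_{\{v,j\}}$ and yield $k-1$ occurrences of $jj$ per copy, hence at least $k$ in total. The pattern of inserted $v$'s that alternates with all of $S$ would be read off from a word representing $H[S]$ together with a vertex joined to all of $S$; as such a graph need not be word-representable (if $H[S]$ induces a $C_5$ it is $W_5$), one copy of $u$ does not suffice, so the insertion must be distributed over all $k$ copies, allowing a bounded per-copy ``defect'' whose contributions — within the copies and at the $k-1$ seams between consecutive copies — are arranged to total at most $k-1$ for each $j\in S$ while never producing an occurrence of $11$ that would help a $\bar S$-letter. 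As in Lemma~\ref{begin-end} and the proofs of Theorems~\ref{gluing-2-thm} and \ref{connecting-2-thm}, one first prepends and appends boundedly many copies of the relevant initial/final permutations, and of $v$, so that the first and last occurrences of all letters fall where the insertion rule needs them.

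The step I expect to be the real obstacle is precisely this coordination: fixing one set of inserted $v$'s that is simultaneously within $k-1$ occurrences of being alternating with each of the (possibly many) neighbours in $S$ and at least $k$ occurrences away from alternating with each non-neighbour, all without touching the already-correct $V(H)$-part. The reason there is enough room is that the $k$-uniformity of $u$ hands us exactly $k$ identical copies of $u$ to work with, so a tolerance of $k-1$ occurrences is just enough to absorb the unavoidable defects; making the placement of the $v$'s explicit and the resulting count of occurrences of $11$ exact is where the work concentrates.
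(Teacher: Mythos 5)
Your proposal has a genuine gap: the construction that the theorem actually requires --- the explicit placement of the copies of $v$ --- is never given. You correctly verify that $u^k$ handles every pair inside $V(H)$ (adjacent pairs stay alternating, non-adjacent pairs accumulate at least $k$ disjoint occurrences of the pattern $11$, one per copy of $u$), and you correctly identify the remaining requirement on the inserted $v$'s. But that requirement is severe: for each neighbour $j$ of $v$, the letter $j$ occurs $k^2$ times in $u^k$, so to bring $w|_{\{v,j\}}$ down to at most $k-1$ occurrences of $11$ you must place a $v$ in all but about $k$ of the $k^2-1$ gaps between consecutive $j$'s, \emph{simultaneously for every $j\in N(v)$}, while leaving at least $k$ occurrences for every non-neighbour and not creating too many $vv$ factors. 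You explicitly flag this coordination as the obstacle and offer only the hope that distributing the insertion over the $k$ copies leaves "enough room"; no placement rule is stated and no count is verified. Since this is precisely the content of the theorem, the proof is not complete.

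For comparison, the paper avoids this coordination problem by \emph{not} keeping the $H$-part pristine. It takes the single word $w$ representing $G\setminus v$ and prepends $k-1$ permutations of $V(G\setminus v)$ that alternate between $\pi(w)$ and its reverse $r(\pi(w))$, each followed by one $v$, ending with the special block $\pi(w)|_{N(v)}\, v\, \pi(w)|_{N^c(v)}$ before $w$ itself. The alternating reversals deliberately inject occurrences of $11$ into every pair $\{y,z\}$ of old vertices at the block boundaries, calibrated so that adjacent pairs land at exactly $k-1$ occurrences and non-adjacent pairs at $k$ or more; meanwhile $v$ occurs exactly once per block, alternating perfectly with every old letter except in the special block, where its position between $\pi(w)|_{N(v)}$ and $\pi(w)|_{N^c(v)}$ is what separates neighbours ($k-1$ occurrences, all coming from $w|_{\{x\}}=x^k$ at the end) from non-neighbours ($k+1$ occurrences). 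The lesson is that spending the entire tolerance of $k-1$ on the old pairs, rather than reserving it for the $v$-interactions, turns the placement of $v$ into a trivial once-per-permutation rule. If you want to salvage your $u^k$ framework, you would need to prove the existence of an insertion pattern with the stated properties, which is essentially a new combinatorial problem.
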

\begin{proof}
Let $w$ be a $k$-uniform word that represents the graph $G\setminus v$.
Let $N(v) \subset V(G)$ be the set of all neighbors of $v$ in $G$, and let $N^{c}(v)$ be the complement of $N(v)$ in $V(G) \setminus \{v\}$, i.e. $N^c(v) = V(G) \setminus (N(v)\cup\{v\})$. 
We will describe how to construct a $(k-1)$-$11$-representant $w(G)$ of $G$ from the word $w$.
Recall that $r(\pi(w))$ is the reverse of the initial permutation $\pi(w)$ of the word $w$.

We start with the word $\pi(w)|_{N(v)} v \pi(w)|_{N^{c}(v)}~w$, where $\pi(w)|_{N(v)}$ and $\pi(w)|_{N^{c}(v)}$ are the induced subwords of $\pi(w)$ on $N(v)$ and $N^c(v)$, respectively.
In each step, we adjoin the words $r(\pi(w))v$ and $\pi(w)v$, in turn, from the left side of the word constructed in the previous step.
We stop when the current word, denoted by $w(G)$, has exactly $k$ $v$'s.
For example, the word $w(G)$, when $k=6$, is given by
$$w(G)=r(\pi(w))v~\pi(w)v~r(\pi(w))v~\pi(w)v~r(\pi(w))v~\pi(w)|_{N(v)} v \pi(w)|_{N^{c}(v)}~w.$$
Next, we will show that the word $w(G)$ $(k-1)$-$11$-represents $G$. 
First, take a vertex $x \neq v$ in $G$.
If $x \in N(v)$, then $w(G)|_{\{x,v\}}=xv \dots xv~w|_{\{x\}}$ has $k-1$ occurrences of the pattern 11 since $w|_{\{x\}}=x^k$.
If $x \in N^c(v)$, then $w(G)|_{\{x,v\}}=xv \dots xv~vx~w|_{\{x\}}$ has $k+1$ occurrences of the pattern 11.
Thus $w(G)$ preserves all the (non-)adjacencies of $v$.
Now, take two distinct vertices, $y,z$ in $V(G)\setminus \{v\}$.
Without loss of generality, we can assume that $\pi(v)|_{\{y,z\}}=yz$.
If $y$ and $z$ are adjacent in $G\setminus v$, then $w|_{\{y,z\}}=yzyz \dots yz$.
Hence, the induced subword $$w(G)|_{\{y,z\}}=\dots zy~yz~zy~(\pi(w)|_{N(v)} v \pi(w)|_{N^{c}(v)})|_{\{y,z\}}~yzyz \dots yz$$ has $k-1$ occurrences of the pattern 11 since the part $\dots zy~yz~zy$ is of length $2(k-1)$, and $(\pi(w)|_{N(v)} v \pi(w)|_{N^{c}(v)})|_{\{y,z\}}$ is either $yz$ or $zy$.
If $y$ and $z$ are not adjacent in $G\setminus v$, then $w|_{\{y,z\}}$ has at least one occurrence of the pattern 11 and it starts with $y$.
Hence, $w(G)|_{\{y,z\}}=\dots zy~yz~zy~(\pi(w)|_{N(v)} v \pi(w)|_{N^{c}(v)})|_{\{y,z\}}~w|_{\{y,z\}}$ has at least $k$ occurrences of the pattern 11 since the only difference from the previous case is $w|_{\{y,z\}}$, which now has at least one occurrence of the pattern 11.
This proves that $w(G)$ is a $(k-1)$-$11$-representant of~$G$.
\end{proof}

\begin{theorem}\label{rem-vert-more-gen-thm}
For any non-negative integers $m$ and $k$ satisfying $2m-k-1 >0$, the following holds.
Let $G$ be a graph with a vertex $v$. If $G\setminus v$ is $m$-uniform $k$-$11$-representable, then $G$ is $(3m-k-1)$-uniform $(2m-2)$-$11$-representable.  \end{theorem}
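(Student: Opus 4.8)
\section*{Proof proposal}

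The plan is to build the required word directly from the given data. Let $w$ be an $m$-uniform $k$-$11$-representant of $G\setminus v$, write $N:=N(v)$ and $N^c:=V(G\setminus v)\setminus N$, and set $t:=2m-k-2$, which is a non-negative integer by the hypothesis $2m-k-1>0$. I would take
$$w(G):=v^{m}\; B_1\,v\,B_2\,v\cdots B_t\,v\;\bigl(\pi(w)|_{N}\,v\,\pi(w)|_{N^c}\bigr)\;w ,$$
where $B_1,\dots,B_t$ alternate between the permutations $\pi(w)$ and $r(\pi(w))$ and are indexed so that $B_t=r(\pi(w))$. (When $t=0$ the hypothesis forces $G\setminus v$ to be a complete graph and one instead starts from $w=\pi^m$ for an arbitrary permutation $\pi$; the cases $m\le 1$ are trivial.) Counting occurrences, each non-$v$ vertex appears $t$ times in the $B_i$'s, once in the split block, and $m$ times in $w$, while $v$ appears $m$ times in the prefix, $t$ times as a separator, and once in the split block; in every case this is exactly $t+m+1=3m-k-1$, so $w(G)$ is $(3m-k-1)$-uniform.

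Then I would verify the three kinds of pairs. For non-$v$ vertices $y,z$, restricting $w(G)$ to $\{y,z\}$ deletes all copies of $v$ and leaves $B_1|_{\{y,z\}}\cdots B_t|_{\{y,z\}}$ — an alternation of a two-letter word with its reverse, which contributes $t-1$ occurrences of $11$ at its internal junctions — followed by the two-letter word $\bigl(\pi(w)|_N\pi(w)|_{N^c}\bigr)|_{\{y,z\}}$ and then $w|_{\{y,z\}}$. The decisive point will be that, whichever of its two possible values this middle two-letter word takes, exactly one of the two junctions around it is an equal pair: on the left $B_t=r(\pi(w))$ ends, on $\{y,z\}$, with the letter that occurs first in $w$, and on the right $w$ begins with that same letter. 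Hence the middle factor adds exactly one further $11$, so the total count in $w(G)|_{\{y,z\}}$ is $c_{yz}(w)+t$, where $c_{yz}(w)$ is the count in $w$; since $c_{yz}(w)\le k$ exactly for edges, this is $\le k+t=2m-2$ for edges and $\ge k+1+t=2m-1$ for non-edges, as needed. For a pair $\{x,v\}$ with $x\in N$ the split block contributes the factor $xv$ (as $x$ lies in its $\pi(w)|_N$-part), so $w(G)|_{\{x,v\}}=v^m(xv)^{t+1}x^m$, with $2m-2$ occurrences of $11$; for $x\in N^c$ it contributes $vx$, so $w(G)|_{\{x,v\}}=v^m(xv)^t v\,x^{m+1}$, with $2m$. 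Thus $v$ is adjacent in the represented graph exactly to $N$, and the represented graph is $G$.

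The step I expect to be the main obstacle is the non-$v$ verification — specifically, reconciling the reordering of letters inside the split block $\pi(w)|_N\,v\,\pi(w)|_{N^c}$ (which is unavoidable, since it is precisely what lets $v$ separate $N$ from $N^c$) with the demand that the $11$-count of \emph{every} non-$v$ pair be shifted by the same amount $t$. For a pair $\{y,z\}$ straddling $N$ and $N^c$ whose first occurrences in $w$ come in the ``wrong'' order, the split block's restriction disagrees with that of $\pi(w)$, and a careless placement would inject a spurious $11$ into some edges — raising their count to $2m-1$ — and destroy the separation. The ``sandwich'' of placing $r(\pi(w))$ immediately before the split block and $w$ immediately after it is what pins the split block's net contribution to be exactly one $11$ for every non-$v$ pair simultaneously; arranging this, while at the same time threading precisely $3m-k-1$ copies of $v$ through the word (the reason the prefix $v^m$ and the single $v$ inside the split block carry exactly those multiplicities), is the crux of the argument, with the degenerate case $t=0$ handled by the special choice $w=\pi^m$.
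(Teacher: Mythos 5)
Your construction is the paper's construction exactly: the prefix $v^m$, then $2m-k-2$ blocks alternating $\pi(w)$ and $r(\pi(w))$ each followed by $v$ (with $r(\pi(w))$ adjacent to the split block), then $\pi(w)|_{N}\,v\,\pi(w)|_{N^c}$, then $w$, with the same uniformity count and the same $11$-counting for all three kinds of pairs, including the key observation that exactly one of the two junctions around the split block is an equal pair. If anything you are slightly more careful than the paper, which does not explicitly address the degenerate case $t=2m-k-2=0$ (where $G\setminus v$ is forced to be complete) that you patch by taking $w=\pi^m$.
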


\begin{proof}
Let $w$ be an $m$-uniform $k$-$11$-representant of $G\setminus v$, $N(v) \subset V(G)$ be the set of all neighbors of $v$ in $G$, and let $N^c(v) = V(G) \setminus (N(v)\cup\{v\})$.
We will describe how to construct a $(3m-k-1)$-uniform $(2m-2)$-$11$-representant $w(G)$ of $G$ from the word $w$.
Similarly to the proof of Theorem~\ref{rem-vert-gen-thm}, we start with the word $\pi(w)|_{N(v)} v \pi(w)|_{N^{c}(v)}~w$, and in each step, we adjoin $r(\pi(v))v$ and $\pi(w)v$, in turn, from the left side until $w(G)$ has exactly $2m-k-1$ occurrences of $v$.
Then, we adjoin $v^m$ from the left side. 
For example, when $k=3$ and $m=4$, the word $w(G)$ is given by
$$w(G)=vvvv~r(\pi(w))v~\pi(w)v~r(\pi(v))v~\pi(w)|_{N(v)} v \pi(w)|_{N^{c}(v)}~w.$$
It is easy to see that $w(G)$ is $(3m-k-1)$-uniform. Indeed, if
$x \in V(G) \setminus \{v\}$, then $w(G)$ contains $(2m-k-1)+m=3m-k-1$ $x$'s since $w$ is $m$-uniform; also, $w(G)$ contains $m+(2m-k-1)=3m-k-1$ $v$'s.
Next, we will show that $w(G)$ $(2m-2)$-$11$-represents $G$.

Let $x \in V(G) \setminus \{v\}$.
If $x \in N(v)$, then $w(G)|_{\{x,v\}}=v^m~xv \dots xv~x^m$.
Thus $w(G)|_{\{x,v\}}$ has $2m-2$ occurrences of the pattern 11.
If $x \in N^c (v)$, then the only difference from the previous case in $w(G)|_{\{x,v\}}$ is that $\pi(w)|_{N(v)} v \pi(w)|_{N^{c}(v)}$ is $vx$, not $xv$. Thus, $w(G)|_{\{x,v\}}$ has $2m$ occurrences of the pattern 11.
Now take two distinct vertices $x,y \in V(G) \setminus \{v\}$.
Without loss of generality, we can assume that $\pi(w)|_{\{x,y\}}=xy$.
If $x,y$ are adjacent in $G\setminus v$, then $w|_{\{x,y\}}$ has at most $k$ occurrences of the pattern 11.
Hence, 
$$w(G)|_{\{x,y\}}=\dots yx~xy~yx~(\pi(w)|_{N(v)} v \pi(w)|_{N^{c}(v)})|_{\{x,y\}}~w|_{\{x,y\}}.$$
Since the length of $\dots yx~xy~yx$ is $4m-2k-4$ and $(\pi(w)|_{N(v)} v \pi(w)|_{N^{c}(v)})|_{\{x,y\}}$ is $xy$ or $yx$, $w(G)|_{\{x,y\}}$ has at most $(2m-k-3)+1+k=2m-2$ occurrences of the pattern 11.
If $x,y$ are not adjacent in $G\setminus v$, then $w|_{\{x,y\}}$ has at least $k+1$ occurrences of the pattern 11.
In this case, the only difference from the previous case in $w(G)$ is $w|_{\{x,y\}}$ and so $w(G)|_{\{x,y\}}$ has at least $(2m-k-3)+1+k+1=2m-1$ occurrences of the pattern 11.
This proves that $w(G)$ is a $(2m-2)$-$11$-representant of~$G$.
\end{proof}

\begin{cor}\label{cor.lem-fund}
For any non-negative integers $n$ and $k$ satisfying $2n + k - 7 > 0$, if each graph on $n$ vertices is $(k+n-3)$-uniformly $k$-$11$-representable, then every graph on $n+1$ vertices is $(2k+3n-10)$-uniformly $(2k+2n-8)$-$11$-representable.
\end{cor}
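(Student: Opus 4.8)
The plan is to obtain Corollary~\ref{cor.lem-fund} as an immediate specialization of Theorem~\ref{rem-vert-more-gen-thm}, with the two parameters $(m,k)$ of that theorem instantiated as $\bigl(k+n-3,\,k\bigr)$, where the second coordinate is the $k$ of the corollary. First I would take an arbitrary graph $G$ on $n+1$ vertices together with any vertex $v\in V(G)$; then $G\setminus v$ is a graph on $n$ vertices, and the hypothesis of the corollary says it is $(k+n-3)$-uniform $k$-$11$-representable.

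Next I would set $m:=k+n-3$ and check that the side condition of Theorem~\ref{rem-vert-more-gen-thm} is met: indeed $2m-k-1=2(k+n-3)-k-1=k+2n-7$, which is positive exactly when $2n+k-7>0$, the hypothesis of the corollary. One should also note that this inequality together with $n,k\ge 0$ forces $m=k+n-3\ge 0$ (in fact $m\ge 1$), so that $m$ is an admissible non-negative integer parameter for the theorem. Applying Theorem~\ref{rem-vert-more-gen-thm} to $G$ and $v$ then gives that $G$ is $(3m-k-1)$-uniform $(2m-2)$-$11$-representable.

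Finally I would translate the conclusion back into the variables $n$ and $k$: substituting $m=k+n-3$ yields $3m-k-1=2k+3n-10$ and $2m-2=2k+2n-8$, so $G$ is $(2k+3n-10)$-uniform $(2k+2n-8)$-$11$-representable. Since $G$ was an arbitrary graph on $n+1$ vertices, this is exactly the asserted statement.

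I do not expect a genuine obstacle here: the argument is essentially a one-line application of the previous theorem, and the only point needing care is the bookkeeping of the parameter substitution, namely confirming that the side condition and the non-negativity of $m$ hold and that the simplifications $3m-k-1=2k+3n-10$ and $2m-2=2k+2n-8$ are carried out correctly.
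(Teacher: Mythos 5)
Your proposal is correct and follows exactly the paper's own argument: both apply Theorem~\ref{rem-vert-more-gen-thm} with $m=k+n-3$ to $G\setminus v$ for an arbitrary vertex $v$ of a graph $G$ on $n+1$ vertices, verify the side condition $2m-k-1=2n+k-7>0$ and the positivity of $m$, and carry out the same substitutions $3m-k-1=2k+3n-10$ and $2m-2=2k+2n-8$. Nothing further is needed.
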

\begin{proof}
This is a direct consequence of Theorem~\ref{rem-vert-more-gen-thm}.
Suppose every graph on $n$ vertices is $(k+n-3)$-uniformly $k$-$11$-representable, and $G$ is a graph on $n+1$ vertices.
Clearly, $k + n - 3$ is a positive integer since we have $2n + k - 7 > 0$.
Then for any vertex $v$ in $G$, the graph $G\setminus v$ obtained from $G$ by removing a vertex $v$ is $(k+n-3)$-uniformly $k$-$11$-representable.
Since $2(k+n-3)-k-1 = 2n+k-7 > 0$, we can apply Theorem~\ref{rem-vert-more-gen-thm}, concluding that the graph $G$ is $(2k+3n-10)$-uniform $(2k+2n-8)$-$11$-representable.
\end{proof}
\noindent In particular, Corollary~\ref{cor.lem-fund} holds for any integers $n \geq 5$ and $k \geq 0$.

\section{1-11-representable graphs}\label{sec3}
An {\em interval graph} has one vertex for each interval in a family of intervals, and an edge between every pair of vertices corresponding to intervals that intersect. Not all interval graphs are word-representable~\cite{KL}. However, all interval graphs are 1-$11$-representable using two copies of each letter, as shown in the following theorem. This shows that the notion of an interval graph admits a natural generalization in terms of 1-$11$-representable graphs (instead of $2$-uniform 1-$11$-representants, one can deal with $m$-uniform 1-$11$-representants for $m\geq 3$).

\begin{theorem}\label{interval-thm} A graph is an interval graph if and only if it is $2$-uniformly $1$-$11$-representable. \end{theorem}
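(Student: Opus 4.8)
The plan is to prove both directions by translating between an interval representation of $G$ and a $2$-uniform word in which each letter appears exactly twice. The natural dictionary is: a letter $x$ occurring twice in a word $w$ marks an interval $I_x = [\ell_x, r_x]$, where $\ell_x$ is the position of the first $x$ and $r_x$ the position of the second $x$. With this dictionary one must understand, for two letters $x$ and $y$ each appearing twice, how many occurrences of the pattern $11$ the induced subword $w|_{\{x,y\}}$ has. A two-letter word with two copies of each letter has one of the ``shapes'' $xyxy$, $yxyx$ (alternating, $0$ occurrences), $xxyy$, $yyxx$ (disjoint nesting, $2$ occurrences), or $xyyx$, $yxxy$ (strict nesting, exactly $1$ occurrence), or the degenerate adjacency cases like $xxyy$ vs. $xyxy$ already listed. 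The key combinatorial observation is therefore: \emph{$w|_{\{x,y\}}$ has at most one occurrence of $11$ if and only if the intervals $I_x$ and $I_y$ intersect}; indeed $I_x \cap I_y \neq \emptyset$ precisely when the shape is alternating ($0$ occurrences, one interval strictly contains an endpoint of the other but not both… more precisely overlapping intervals) or strictly nesting ($1$ occurrence, one interval contained in the other), while $I_x \cap I_y = \emptyset$ precisely when the shape is $xxyy$ or $yyxx$ ($2$ occurrences). So the condition ``at most $k=1$ occurrences of $11$'' is exactly ``the two intervals meet.''

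For the backward direction, suppose $w$ is a $2$-uniform word $1$-$11$-representing $G$. First I would argue we may assume all $2n$ positions carry distinct ``coordinates'' (just read positions left to right as the integers $1,\dots,2n$), and assign to each vertex $x$ the interval $I_x = [\ell_x, r_x]$ as above. By the combinatorial observation of the previous paragraph, $xy \in E(G)$ iff $w|_{\{x,y\}}$ has at most one $11$ iff $I_x \cap I_y \neq \emptyset$. Hence $\{I_x : x \in V(G)\}$ is an interval representation of $G$, so $G$ is an interval graph. A minor point to check is that $\ell_x < r_x$ always (true since the two copies of $x$ are at distinct positions) so the $I_x$ are genuine nondegenerate intervals, which is fine for interval graphs.

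For the forward direction, let $G$ be an interval graph with an interval representation $\{[a_x, b_x] : x \in V(G)\}$; by a standard perturbation we may take all $2n$ endpoints $a_x, b_x$ to be distinct reals. Sort these $2n$ endpoints increasingly and build a word $w$ of length $2n$ by writing, in the position corresponding to each endpoint, the label of the vertex owning that endpoint; thus each letter $x$ appears exactly twice, at the positions of $a_x$ (first) and $b_x$ (second). By construction $I_x = [\ell_x, r_x]$ recovers (the order type of) $[a_x,b_x]$, so again the combinatorial observation gives: $w|_{\{x,y\}}$ has at most one $11$ iff $[a_x,b_x]\cap[a_y,b_y]\neq\emptyset$ iff $xy \in E(G)$. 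Hence $w$ is a $2$-uniform $1$-$11$-representant of $G$. I expect the main obstacle — really the only nonroutine step — to be the careful case analysis establishing the combinatorial observation that, among $2$-uniform two-letter words, the shapes with $\le 1$ occurrence of $11$ are exactly the ones corresponding to intersecting intervals; once that lemma is pinned down, both directions are immediate, and the statement can be seen as the $k=1$ analogue of Theorem~\ref{circle-class-thm} (where circle graphs arise because the alternating shape $xyxy$, the unique $0$-occurrence shape, corresponds exactly to crossing chords).
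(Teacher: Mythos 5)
Your proposal is correct and follows essentially the same route as the paper: both directions use the dictionary between the two occurrences of a letter in a $2$-uniform word and the endpoints of an interval, together with the observation that the induced two-letter subword has at most one occurrence of the pattern $11$ exactly when the corresponding intervals intersect. Your explicit enumeration of the shapes $xyxy$, $xyyx$, $xxyy$ just spells out the case analysis the paper leaves implicit, and your perturbation to distinct endpoints matches the paper's remark that overlapping intervals may be assumed to overlap in more than one point.
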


\begin{proof} Let $G$ be a $1$-$11$-representable graph on $n$ vertices and $w=w_1w_2\ldots w_{2n}$ be a word that $2$-uniformly $1$-$11$-represents $G$. For any $v\in V(G)=[n]$, consider the interval $I_v=[v_1,v_2]$ on the real line such that $w_{v_1}=w_{v_2}=v$.  Note that $uv$ is an edge in $G$ if and only if $I_u$ and $I_v$ overlap. But then, $G$ is the interval graph given by the family of intervals $\{I_v:v\in [n]\}$. 

To see that any interval graph $G$ is necessarily $1$-$11$-representable, we note a well-known easy to see fact that in the definition of an interval graph, one can assume that overlapping intervals overlap in more than one point. But then, the endpoints of an interval $I_v$ will give the positions of the letter $v$ in a word $w$ constructed by recording relative positions of all the intervals. As above, one can see that such an $w$ 1-11-represents $G$. 
\end{proof}

Given a graph $G$ with an edge $xy$, we let $G^{\triangle}_{xy}$ be the graph obtained from $G$ by adding a vertex $z$ connected only to the vertices $x$ and $y$.  Thus, $G^{\triangle}_{xy}$ is obtained from $G$ by adding a triangle. If $G$ is word-representable, that is, $G\in \mathcal{G}^{(0)}$, then $G^{\triangle}_{xy}$ is not necessarily word-representable. This can be seeing on the  non-word-representable graph $D_1$ in Figure~\ref{25-non-repr}. Indeed, removing, for example, the top vertex in that graph, we obtain a word-representable graph, since the only non-word-representable graph on six vertices is the wheel $W_5$ \cite{KL,KP}. The following theorem establishes that adding a triangle is a safe operation in the case of 1-$11$-representable graphs.

\begin{theorem}\label{add-triangle-thm} Let $G\in \mathcal{G}^{(1)}$ and $xy$ be an edge in $G$. Then $G^{\triangle}_{xy}\in \mathcal{G}^{(1)}$. \end{theorem}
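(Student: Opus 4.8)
The plan is to start with a $1$-$11$-representant $w$ of $G$ and modify it locally around the edge $xy$ to accommodate the new vertex $z$, which must be adjacent to exactly $x$ and $y$. First I would normalize $w$ using the tools already available: by Lemma~\ref{extension-lem} (prepending $r(\pi(w))$ would push us to $\mathcal{G}^{(2)}$, so instead) and Lemma~\ref{begin-end}, I can assume $w$ has arbitrarily many copies of each letter and convenient first/last letters; crucially, using the idea in the proof of Theorem~\ref{degree-1-thm}, I can also assume $x$ occurs many times and in fact arrange the occurrences of $x$ and $y$ so that $w|_{\{x,y\}}$ has a controlled form. Since $xy$ is an edge of $G\in\mathcal{G}^{(1)}$, $w|_{\{x,y\}}$ has at most one occurrence of the pattern $11$; by replacing $w$ with a suitable word of the form $\pi(w)^t w$ I can assume $x$ and $y$ strictly alternate in $w$ (because $\pi(w)|_{\{x,y\}}$ is a single two-letter block, enough prepended copies force alternation while preserving all edges/non-edges — this is the standard trick).

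With $x$ and $y$ strictly alternating in $w$, say $w|_{\{x,y\}} = xyxy\cdots$, the next step is to insert $z$'s so that: (i) $z$ alternates with $x$ (so $zx$ is an edge, using the full budget of at most one $11$ among $\{x,y\}$-type... actually among $\{z,x\}$), (ii) $z$ alternates with $y$, and (iii) for every other vertex $u$, $w'|_{\{z,u\}}$ has at least two occurrences of the pattern $11$, killing the edge $zu$. The natural construction: locate consecutive occurrences $\ldots x \ldots y \ldots$ in $w$ and insert a copy of $z$ in the gap just after each $x$ (equivalently between each $x$ and the following $y$), and also handle the ends. This makes $z$ alternate with both $x$ and $y$ simultaneously, since $z$ sits "interleaved" exactly with the $xy$-alternation. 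To guarantee (iii), I would additionally prepend a block such as $(zu_1u_1 z u_2 u_2 \cdots)$ — more precisely prepend $z\, r(\pi(w))\, z\, \pi(w)$ or a similar short word — contributing at least two $11$'s in the $\{z,u\}$-induced subword for every $u\neq x,y$, while only adding $x$'s and $y$'s in alternating fashion so as not to disturb (i) and (ii); one must check this prepended block contributes $0$ or at most the allowed number of $11$'s to $w'|_{\{z,x\}}$ and $w'|_{\{z,y\}}$, which is arranged by choosing where the $z$'s go relative to the $x$'s and $y$'s in that block.

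The main obstacle I anticipate is the simultaneous bookkeeping: $z$ must alternate with \emph{both} $x$ and $y$ at once, yet be non-adjacent to everything else, and the ``everything else'' part requires prepending material that itself contains $x$ and $y$. So I must place the $z$'s in the prepended block so that $z$ still alternates with $x$ and with $y$ there, which constrains the block to look like $(\text{stuff})$ where $x,y,z$ appear in a pattern like $zxy\,zxy\cdots$ or $z x z y\cdots$ — and then verify that the total count of $11$'s in $w'|_{\{z,x\}}$ and in $w'|_{\{z,y\}}$ stays $\leq 1$. A clean way to sidestep part of this: first apply Lemma~\ref{begin-end} to assume $w$ begins with $xy$ (or with $x$ then soon $y$), prepend the single letter $z$ at the very front and insert $z$'s down the word tracking the $xy$-alternation, then separately argue non-adjacency of $z$ to each $u\neq x,y$ by noting that $\pi(w)$ starts with $x$ (so $u$ appears after the first $z$, and we can force an extra $zz$ or $uu$ by one more prepended $\pi(w)$-with-a-$z$ block). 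Once the placement rules are pinned down, the verification for each pair is a short, routine case check of the induced subword's form, exactly in the style of the proofs of Theorems~\ref{degree-1-thm} and \ref{same-neighbourhood-thm}.
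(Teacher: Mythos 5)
There is a genuine gap, and it is the reduction to the alternating case. You claim that by replacing $w$ with $\pi(w)^t w$ you may assume $x$ and $y$ strictly alternate. This is false: the induced subword $(\pi(w)^t w)|_{\{x,y\}}$ equals $(\pi(w)|_{\{x,y\}})^t\, w|_{\{x,y\}}$, so any factor $xx$ or $yy$ sitting inside $w|_{\{x,y\}}$ survives unchanged, and the number of occurrences of the pattern $11$ for the pair $\{x,y\}$ can never decrease under prepending (it can only stay the same or grow at the junctions). Since $xy$ being an edge of a $1$-$11$-representable graph only guarantees \emph{at most one} such occurrence, the case where $w|_{\{x,y\}}$ has exactly one occurrence of $xx$ or $yy$ is genuinely possible and cannot be normalized away. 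The paper treats it as a separate case with a different placement of the new letter $z$ around the unique defect (its Case 2); your proposal never addresses it, so the proof is incomplete at its core.

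Even in the alternating case, the part of your construction that does the real work is left unresolved. The paper does not make $z$ alternate with $x$ and $y$: it spends the budget of one pattern-$11$ occurrence on each of the pairs $\{z,x\}$ and $\{z,y\}$ by inserting $z$'s only at a few chosen positions (schematically $zxz\,g_1\,y\,g_2\,x\,g_3\,zyz\,g_4\,x\,g_5\,yz\cdots$), arranged so that two of the gaps between consecutive $z$'s contain only the letter $x$, respectively only the letter $y$. This automatically yields two $zz$ factors in $\tilde{w}|_{\{v,z\}}$ for every other vertex $v$, with no prepended block at all. Your alternative --- prepending a block containing all letters to kill the edges $zv$ --- runs into exactly the interaction you flag yourself: that block necessarily contains $x$ and $y$, hence contributes to $w'|_{\{z,x\}}$ and $w'|_{\{z,y\}}$, and you never exhibit a placement of the $z$'s that keeps those two counts at most one while forcing at least two occurrences for every other pair. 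As written, the proposal does not constitute a proof.
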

\begin{proof}
Let $w$ be an $1$-$11$-representant of $G$.
Note that, since $x$ and $y$ are adjacent in $G$, the letters $x$ and $y$ are either alternating in the word $w$, or $w|_{\{x,y\}}$ has exactly one occurrence of the pattern 11.
In each case, we will construct a word $\tilde{w}$ over $V(G^{\triangle}_{xy})$, which $1$-$11$-represents the graph $G^{\triangle}_{xy}$.

\begin{enumerate}[{\bf \text{Case }1.}]
\item Suppose that $x$ and $y$ are alternating in $w$.
By Lemma~\ref{begin-end}, we can assume that $w$ starts with $x$ and ends with $y$, i.e.\   $w=x~g_1~y~g_2 \dots x~g_m~y$, where $g_i$ is a word on $V(G)\setminus\{x,y\}$.
Also, we can assume that $m \geq 3$; if not, adjoin the initial permutation $\pi(w)$ to the left of $w$.
Now, we claim that the word 
$$\tilde{w}:=zxz~g_1~y~g_2~x~g_3~zyz~g_4~x~g_5~yz~g_6 \dots x~g_m~yz$$
$1$-$11$-represents the graph $G^{\triangle}_{xy}$, where $z \in V(G^{\triangle}_{xy})\setminus V(G)$.

It is clear that $\tilde{w}$ respects the whole structure of $G$ since the restriction of $\tilde{w}$ to $V(G)$ is $w$.
Since $\tilde{w}|_{\{x,z\}}=zxzxzzxz \dots xz$ and $\tilde{w}|_{\{y,x\}}=zzyzyzyz \dots yz$, $z$ is adjacent to $x$ and $y$.
On the other hand, for each $v \in V(G)\setminus\{x,y\}$, it is obvious that the induced subword $\tilde{w}|_{\{v,z\}}$ has at least two occurrences of the pattern 11, hence $z$ is not adjacent to $v$.
Therefore, $\tilde{w}$ $1$-$11$-represents the graph $G^{\triangle}_{xy}$.

\item Suppose $w|_{\{x,y\}}$ has exactly one occurrence of the pattern 11.
Without loss of generality, we can assume that $w|_{\{x,y\}}$ contains the occurrence of the factor $yy$.
By Lemma~\ref{begin-end}, we can also assume that $w$ starts with $x$ and ends with $x$, i.e.\ $$w=x~g_1~y~g_2 \dots x~g_{m-1}~y~g_{m}~y~h_1~x~h_2 \dots y~h_{l}~x$$ for some positive integers $m,l$, and words $g_i,h_j$ on $V(G)\setminus\{x,y\}$.
We claim that the word
$$\tilde{w}:=zxz~g_1~y~g_2~xz~g_3~y~g_4 \dots xz~g_{m-3}~y~g_{m-2}~x~g_{m-1}~zyz~g_{m}~y~h_1~xz~h_2 \dots y~h_{l}~xz$$
$1$-$11$-represents the graph $G^{\triangle}_{xy}$.

It is clear that $\tilde{w}$ respects the whole structure of $G$ since the restriction of $\tilde{w}$ to $V(G)$ is $w$.
Since $\tilde{w}|_{\{x,z\}}=zxzx \dots zxzzxz \dots xz$ and $\tilde{w}|_{\{y,z\}}=zzyzyz \dots yz$, $z$ is adjacent to $x$ and $y$.
On the other hand, for each $v \in V(G) \backslash \{x,y\}$, the induced subword $\tilde{w}|_{\{v,z\}}$ has at least two occurrences of the pattern 11, hence $z$ is not adjacent to $v$.
Therefore, $\tilde{w}$ $1$-$11$-represents the graph $G^{\triangle}_{xy}$.
\end{enumerate} 
\end{proof}

For the next theorem, Theorem~\ref{rem-vert-perm-repr-thm}, recall the definition of a permutationally representable graph in Section~\ref{intro}. Note that the proof of Theorem~\ref{rem-vert-perm-repr-thm} is similar to that of Theorem~\ref{rem-vert-gen-thm},   while Theorem~\ref{rem-vert-perm-repr-thm} deals with a stricter assumption. However, the stricter assumption is compensated by a stronger conclusion, justifying us having Theorem~\ref{rem-vert-gen-thm}.  

\begin{theorem}\label{rem-vert-perm-repr-thm} 
Let $G$ be a graph with a vertex $v$. If $G\setminus v$ is permutationally representable (equivalently, by Theorem~\ref{comp-thm}, if $G\setminus v$ is a comparability graph) then $G$ is $1$-$11$-representable.
\end{theorem}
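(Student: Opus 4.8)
The plan is to mimic the construction in the proof of Theorem~\ref{rem-vert-gen-thm}, but exploit the special structure of a permutation representation. Since $G\setminus v$ is permutationally representable, by Theorem~\ref{comp-thm} there is a word $w = p_1 p_2 \cdots p_t$ that represents $G\setminus v$, where each $p_i$ is a permutation of $V(G)\setminus\{v\}$. The key gain over the general case is that for any two distinct vertices $y,z$ in $V(G)\setminus\{v\}$, the induced word $w|_{\{y,z\}}$ is a concatenation of the factors $yz$ and $zy$, so between any two consecutive occurrences of the pattern $11$ there is, roughly, an alternating stretch; more precisely the number of $11$'s in $w|_{\{y,z\}}$ is exactly $t-1$ minus the number of ``switches''. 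What we really need is just: when $yz\in E(G\setminus v)$ then $w|_{\{y,z\}}$ is strictly alternating (zero occurrences of $11$), and when $yz\notin E(G\setminus v)$ then $w|_{\{y,z\}}$ has at least one occurrence of $11$ — and, crucially for controlling the budget, that we may choose $w$ so that $w|_{\{y,z\}}$ both begins and ends with whichever of $y,z$ we like, since a cyclic shift of a concatenation of full permutations is again a concatenation of full permutations and represents the same graph (Theorem~\ref{cyclic-shift}), and we may also freely prepend copies of $p_1$.

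Concretely, I would first normalize $w$: using Theorem~\ref{cyclic-shift} and prepending permutations, arrange that $t$ is as large as we please and that the first permutation $p_1$ equals some fixed permutation $q$ of $V(G)\setminus\{v\}$ that we use as the ``initial permutation'' in the gluing. Then set, analogously to Theorem~\ref{rem-vert-gen-thm},
$$w(G) := (q\, v)^{?}\ \cdots\ q|_{N(v)}\ v\ q|_{N^c(v)}\ w,$$
where we adjoin alternating blocks $q\,v$ and $r(q)\,v$ on the left (with $r$ the reverse) exactly enough times so that $v$ occurs a controlled number of times — but now, because the alternating structure of each $w|_{\{y,z\}}$ is so clean, only a \emph{constant} number of such blocks (independent of $t$) is needed to force the desired count of $11$'s. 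Specifically the left prefix should contribute exactly zero $11$'s between adjacent-in-$G\setminus v$ pairs and at least two $11$'s between non-adjacent pairs, while $v$ should see each neighbour with zero $11$'s and each non-neighbour with at least two; since each $11$ allowed for $v$'s neighbours would blow the $1$-$11$ budget, the count for $v$–neighbour pairs must be exactly $0$, i.e. $v$ must strictly alternate with each of its neighbours in $w(G)$, which pins down how $v$ is interleaved.

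The heart of the argument, and the main obstacle, is the simultaneous bookkeeping: we need a single word in which (i) $v$ strictly alternates with every vertex of $N(v)$, (ii) $v$ has $\ge 2$ occurrences of $11$ with every vertex of $N^c(v)$, (iii) every edge $yz$ of $G\setminus v$ is still strictly alternating (exactly $0$ occurrences of $11$) — this is the delicate one, since prepending $q$, $r(q)$, and the block $q|_{N(v)}\,v\,q|_{N^c(v)}$ must not create a single $yy$ or $zz$ — and (iv) every non-edge $yz$ of $G\setminus v$ still has $\ge 1$ occurrence of $11$, which is automatic since $w$ already provides it. Requirement (iii) is where permutationality is essential: because $q$, $r(q)$ and $w$ are each concatenations of permutations, $w|_{\{y,z\}}$ for an edge $yz$ is alternating, and one checks that juxtaposing pieces that each end/start with the appropriate letter keeps alternation; the reversal $r(q)$ is precisely the device that lets the boundary between consecutive prepended blocks be alternating for \emph{every} pair at once. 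I would verify (iii) by the standard observation that if $u$ and $u'$ are concatenations of permutations with $u$ ending in permutation $p$ and $u'$ beginning in permutation $p'$, then for an edge $yz$ the word $(uu')|_{\{y,z\}}$ has a $11$ at the seam iff $p|_{\{y,z\}} = p'|_{\{y,z\}}$ as ordered pairs, so choosing the seam permutations to be reverses of one another (or handling the single block $q|_{N(v)}\,v\,q|_{N^c(v)}$ separately, noting its restriction to $\{y,z\}$ is just $q|_{\{y,z\}}$ when $y,z$ lie on the same side of the $N(v)/N^c(v)$ split, and $q|_{\{y,z\}}$ read in a fixed direction otherwise) keeps all edges alternating. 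Once (i)–(iv) are checked the word $w(G)$ is a $1$-$11$-representant of $G$, completing the proof; I expect the write-up to consist mainly of exhibiting one explicit such $w(G)$ and a short case check of the four conditions.
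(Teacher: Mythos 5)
Your overall architecture is the same as the paper's: the paper's word is
$r(\pi(w))\,v\,\pi(w)|_{N(v)}\,v\,\pi(w)|_{N^c(v)}\,\pi_1 v \pi_2 v \cdots v \pi_k$,
i.e.\ a constant-size prefix built from one reversed permutation and the split block, followed by the permutations of $w$ with $v$ interleaved between them. However, the key step you lean on is stated backwards, and the condition you try to enforce is both unachievable and unnecessary. Your seam observation should read: if $u$ ends with permutation $p$ and $u'$ begins with permutation $p'$, then $(uu')|_{\{y,z\}}$ has an occurrence of $11$ at the seam if and only if $p|_{\{y,z\}} \neq p'|_{\{y,z\}}$ (the last of $\{y,z\}$ in $p$ equals the first of $\{y,z\}$ in $p'$ exactly when the two restrictions are reverses of one another). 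So juxtaposing $q$ with $r(q)$ does \emph{not} ``keep the boundary alternating for every pair at once''; it creates one occurrence of $11$ at that seam for \emph{every} pair. Consequently your plan of adjoining several alternating blocks $q\,v$ and $r(q)\,v$ would add one $11$ per seam per pair and immediately exceed the budget for the edges of $G\setminus v$, and your requirement (iii) --- exactly $0$ occurrences for every edge of $G\setminus v$ --- cannot be met by any prefix containing a reversal.

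The fix, which is what the paper actually does, is to use exactly one reversed permutation $r(\pi_1)$ in the prefix and to \emph{spend} the single permitted occurrence of $11$ on every pair of $V(G)\setminus\{v\}$: for an edge $yz$ of $G\setminus v$ one gets $zy\,(\cdot)\,yzyz\cdots$ with exactly one $11$ (still within the $1$-$11$ budget), while for a non-edge the reversal guarantees a \emph{second} $11$ in the cases where $w|_{\{y,z\}}$ alone only supplies one (namely when $w|_{\{y,z\}}=zy\cdots zy\,yz\cdots$). So the reversal's purpose is the opposite of the one you assign it. Separately, the word you display places all copies of $v$ in the prefix and leaves $w$ untouched at the end; then for $x\in N(v)$ the suffix contributes $x^k$ and hence $k-1$ occurrences of $11$ to $w(G)|_{\{x,v\}}$, destroying the adjacency of $v$ with its neighbours. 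You note verbally that $v$ ``must be interleaved,'' but the interleaving $\pi_1 v \pi_2 v\cdots v\pi_k$ (together with the block $\pi_1|_{N(v)}\,v\,\pi_1|_{N^c(v)}$, which supplies the $vv$ and $xx$ that disconnect $v$ from $N^c(v)$) is the substance of the construction, not a detail that can be ``pinned down'' afterwards.
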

\begin{proof}
Let $w$ be a $0$-$11$-representant of $G\setminus v$.
Since $G\setminus v$ is permutationally representable, we can assume that $w$ is of the form $w=\pi_1 \pi_2 \dots \pi_k$ for some positive integer $k$ and permutations $\pi_i$ of $V(G\setminus v)$.
Let $N(v)$ be the set of neighbours of $v$ in $G$ and let $N^{c}(v) := V(G)\setminus(N(v)\cup\{v\})$.
We claim that the word
$$w(G):=r(\pi(w))~v~\pi(w)|_{N(v)}~v~\pi(w)|_{N^c(v)}~\pi_1 v \pi_2 v \dots v \pi_k.$$ 
$1$-$11$-represents the graph $G$.

For each $x \in V(G) \setminus \{v\}$, if $x \in N(v)$ then the induced subword $w(G)|_{\{x,v\}}=xvxv \dots xvx$ is alternating, which should be the case.
If $x \in N^c(v)$, then the induced subword $w(G)|_{\{x,v\}}=xvvxxvxv \dots xvx$ has two occurrences of the pattern 11, which, again, should be the case.
Thus, $w(G)$ respects all adjacencies of the vertex $v$.
Now, take $y,z \in V(G) \setminus \{v\}$.
If $y$ and $z$ are adjacent in $G\setminus v$, then $w|_{\{y,z\}}$  has alternating $y$ and $z$.
Without loss of generality, assume that $w|_{\{y,z\}}=yzyz \dots yz$.
Then, the induced subword $w(G)|_{\{y,z\}}=zy~(\pi(w)|_{N(v)}~\pi(w)|_{N^c(v)})|_{\{y,z\}}~yzyz \dots yz$ has at most one occurrence of the pattern 11 as $(\pi(w)|_{N(v)}~\pi(w)|_{N^c(v)})|_{\{y,z\}}$ is either $yz$ or $zy$. 
If $y$ and $z$ are not adjacent in $G\setminus v$, then $w|_{\{y,z\}}$ is not alternating, i.e.\ it contains either $yy$ or $zz$.
Without loss of generality, assume that $w|_{\{y,z\}}$ contains $yy$. If $\pi(w)|_{\{y,z\}}=yz$, then with the assumption on an occurrence of $yy$, at least one occurrence of the factor $zz$ is not avoidable in $w$, so at least two occurrences of the pattern 11 in  $w(G)|_{\{y,z\}}$ are guaranteed. Otherwise,  $w|_{\{y,z\}}=zy \dots zy~yz \dots$.
Then, $w(G)|_{\{y,z\}}=yz~(\pi(w)|_{N(v)}~\pi(w)|_{N^c(v)})|_{\{y,z\}}~zy \dots zy~yz \dots $ has two occurrences of the pattern 11, as $(\pi(w)|_{N(v)}~\pi(w)|_{N^c(v)})|_{\{y,z\}}$ is either $yz$ or $zy$.
In any case, $w(G)$ preserves the (non-)adjacency of $y$ and $z$.
Therefore the word $w(G)$ $1$-$11$-represents the graph $G$.
\end{proof}

\begin{theorem}\label{edge-thm} Let $G$ be a word-representable graph and $e$ be an edge in $G$. Let $G\setminus e$ be the graph obtained from $G$ by removing $e$. Then, $G\setminus e$ is $1$-$11$-representable.
\end{theorem}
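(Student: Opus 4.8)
The plan is to build a $1$-$11$-representant of $G\setminus e$ by starting from a \emph{uniform} word-representant of $G$, squaring it to create uniform slack on every edge, and then locally perturbing the edge $e=xy$ by doubling one occurrence of $x$ and one occurrence of $y$.

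First, by Theorem~\ref{unif-wr} fix a uniform word-representant $v$ of $G$, with $t$ copies of each letter. By Lemma~\ref{extension-lem} the word $vv$ is a $1$-$11$-representant of $G$, but it has a stronger property. For an edge $pq$ of $G$, the alternating word $v|_{\{p,q\}}$ has $t$ copies of each of $p$ and $q$, so it equals $(pq)^t$ or $(qp)^t$; hence $(vv)|_{\{p,q\}}$ is strictly alternating and contains no occurrence of the pattern $11$. For a non-edge $pq$, $v|_{\{p,q\}}$ already contains an occurrence of $11$, so $(vv)|_{\{p,q\}}$ contains at least two. Thus in $vv$ every edge pair sits at count $0$ and every non-edge pair at count $\ge 2$.

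Next, let $w'$ be obtained from $vv$ by replacing one occurrence of $x$ with $xx$ and one occurrence of $y$ with $yy$. The elementary fact to verify is that doubling a single occurrence of a letter $a$ in a word leaves the induced subword on any pair $\{c,d\}$ with $a\notin\{c,d\}$ unchanged, and increases the number of occurrences of $11$ in the induced subword on $\{a,b\}$ by \emph{exactly one} for every $b\neq a$ --- this is seen by inspecting the two junctions created around the doubled letter. Applying this for $a=x$ and then for $a=y$: pairs contained in $V(G)\setminus\{x,y\}$ are untouched, so they have at most one occurrence of $11$ precisely when they are edges of $G$, equivalently of $G\setminus e$; an edge $xz$ with $z\neq y$ moves from count $0$ to count $1$, a non-edge $xz$ moves from $\ge 2$ to $\ge 3$, and symmetrically for the pairs at $y$; finally the pair $\{x,y\}$ moves from count $0$ to count $2$. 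Hence $w'|_{\{x,y\}}$ has at least two occurrences of $11$ while every other pair has at most one occurrence if and only if it is an edge of $G$; this is exactly the adjacency relation of $G\setminus e$, so $w'$ is a $1$-$11$-representant of $G\setminus e$.

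The one point needing care is that doubling an occurrence of $x$ raises the $11$-count of \emph{every} pair $\{x,z\}$ at once, not just $\{x,y\}$, so we must be sure that no edge incident to $x$ (or $y$) is already at the threshold. This is precisely the purpose of using a uniform representant and passing to $vv$: that step makes every edge pair sit at count $0$ with a full unit of slack, so the single unavoidable bump keeps it below the threshold. Everything else is a routine check.
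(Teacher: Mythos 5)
Your proof is correct and follows essentially the same strategy as the paper: take a uniform word-representant $w$ of $G$, pass to $ww$ so that every edge pair sits at zero occurrences of $11$ and every non-edge at two or more, and then perturb so that each of $x$ and $y$ acquires exactly one extra occurrence against every other letter, pushing only the pair $\{x,y\}$ over the threshold. The paper implements the perturbation by forming $yx\,ww\,yx$ rather than by doubling an internal occurrence of each of $x$ and $y$, but the effect on every induced pair is the same.
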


\begin{proof}
Let $e = xy$ and $w$ be $G$'s uniform word-representant that exists by Theorem~\ref{unif-wr}.
Without loss of generality, we can assume that $w|_{\{x,y\}}=xyxy\dots xy$.
We claim that the graph $G'$ on $V(G)$, which is $1$-$11$-represented by the word $w' := yxwwyx$, is precisely the graph $G\setminus e$.

It is clear that $x$ and $y$ are not adjacent in $G'$ since $w'|_{\{x,y\}}=yxxy\dots xyyx$.
Since the word $ww$ is a $1$-$11$-representant of $G$, it remains to show that for every vertex $z \in V(G)\setminus\{x,y\}$, and a vertex $i \in \{x,y\}$, $G'$ contains the edge $iz$ whenever $iz$ is an edge in $G$.
Suppose $iz$ is an edge in $G$.
Then, $ww|_{\{i,z\}}$ is either $iz\dots iz$, or $zi\dots zi$.
It follows that $w'|_{\{i,z\}}$ is either $iiz\dots izi$, or $izi\dots zii$. Thus, $iz$ is an edge in $G'$. If $iz$ is not an edge in $G$, then $ww|_{\{i,z\}}$ will contain at least two occurrences of the pattern 11, so $iz$ is not an edge in $G'$. This shows that $G' = G\setminus e$.
\end{proof}

The following two theorems generalize Theorem~\ref{edge-thm}. The reason that we keep Theorem~\ref{edge-thm} as a separate result is that it is very useful in 1-11-representing 25 non-word-representable graphs (see Section~\ref{repr-non-repr-sec}).

\begin{theorem}\label{clique-thm}
Let $G$ be a word-representable graph and $K$ be a vertex subset in $G$. Let $G_K$ be the graph obtained from $G$ by removing the edges $\{xy\in E(G):x,y\in K\}$. Then, $G_K$ is $1$-$11$-representable.
\end{theorem}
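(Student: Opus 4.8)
The plan is to mimic the proof of Theorem~\ref{edge-thm}, but now remove an entire clique's worth of edges at once rather than a single edge. Let $w$ be a uniform word-representant of $G$, which exists by Theorem~\ref{unif-wr}, and write $K=\{x_1,x_2,\dots,x_t\}$. Since all pairs $x_ix_j$ are edges of $G$, the letters $x_1,\dots,x_t$ pairwise alternate in $w$, so after a cyclic shift (Theorem~\ref{cyclic-shift}) we may assume that in every ``block'' of $w$ — meaning each maximal factor in which no letter repeats, or more carefully each copy of the pattern of first occurrences — the letters of $K$ appear in the fixed relative order $x_1x_2\cdots x_t$. Concretely, $w|_K=(x_1x_2\cdots x_t)^p$ for some $p\geq 1$. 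The idea is then to prepend and append to $ww$ a short gadget built only from the letters of $K$ that forces each pair $x_ix_j$ to pick up an extra occurrence of the pattern $11$, while leaving every pair not entirely inside $K$ essentially unaffected.

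The natural candidate is $w' := r(x_1x_2\cdots x_t)\, ww\, r(x_1x_2\cdots x_t) = x_t\cdots x_1\, ww\, x_t\cdots x_1$. First I would check the pairs inside $K$: for $i<j$, the restriction $(ww)|_{\{x_i,x_j\}}$ is $x_ix_j x_ix_j\cdots$, alternating, so it contributes no $11$; prepending $x_jx_i$ (the reverse order) creates one occurrence of $x_ix_i$ at the junction, and appending $x_jx_i$ creates one occurrence of $x_jx_j$ at the other junction, giving exactly two occurrences of the pattern $11$ in $w'|_{\{x_i,x_j\}}$ — so $x_ix_j$ is a non-edge of $G_K$, as desired. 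Next I would check a pair with at most one endpoint in $K$, say $i\in K$ and $z\notin K$ (the case of two vertices outside $K$ is immediate since $ww$ is a $1$-$11$-representant of $G$ and the prepended/appended letters don't involve $z$): here only two copies of $x_i$ are added, one at the very front and one at the very back, so $w'|_{\{i,z\}}$ is obtained from $(ww)|_{\{i,z\}}=ii\cdots$ or $zi\cdots zi$ type words by adding one $i$ on each end; as in Theorem~\ref{edge-thm} this changes the count of $11$'s by at most one on each side but preserves whether the total is $\leq 1$ versus $\geq 2$ — edges of $G$ incident to $K$ survive, non-edges stay non-edges. Finally, pairs $z_1,z_2$ both outside $K$ are untouched.

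The main obstacle, and the place requiring real care, is the bookkeeping in the mixed case where one endpoint lies in $K$: I must verify that adding a single copy of $x_i$ at each end of $(ww)|_{\{x_i,z\}}$ never pushes an edge-pair (which has $0$ or, in $ww$, at most $1$ occurrence of $11$) up to $2$ occurrences, and never pushes a non-edge-pair down below $2$. Since $w$ is uniform and $ww|_{\{x_i,z\}}$ either alternates or contains at least two $11$'s, the argument from Theorem~\ref{edge-thm} applies verbatim: if $x_iz$ is an edge of $G$ then $ww|_{\{x_i,z\}}\in\{iz\cdots iz,\ zi\cdots zi\}$ and tacking an $i$ onto each end yields at most one $11$; if $x_iz$ is not an edge then $ww|_{\{x_i,z\}}$ already has two occurrences of $11$ and these are not destroyed by extending the word. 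One subtlety is to make sure the cyclic shift that puts $w|_K$ into the form $(x_1\cdots x_t)^p$ can be performed without disturbing uniformity — but Theorem~\ref{cyclic-shift} guarantees exactly that a cyclic shift of a uniform word represents the same graph, so this is legitimate. Putting these verifications together shows $G_K$ is $1$-$11$-represented by $w'$, completing the proof.
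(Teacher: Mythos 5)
Your construction is essentially the paper's: the paper forms $p\,ww\,q$ with $p=r(\pi(w|_K))$ and $q=r(\sigma(w|_K))$, and when $K$ is a clique these both equal your $x_t\cdots x_1$, so your gadget and your three-way case analysis (both endpoints in $K$, one in $K$, none in $K$) match the intended argument and are verified correctly in that case.

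The gap is that the theorem is stated for an \emph{arbitrary} vertex subset $K$, not a clique, and your opening step --- ``since all pairs $x_ix_j$ are edges of $G$, the letters of $K$ pairwise alternate in $w$, hence $w|_K=(x_1\cdots x_t)^p$'' --- is simply false under the stated hypothesis. If $K$ contains a pair that is non-adjacent in $G$, then $w|_K$ need not be a power of a single permutation, and your analysis of ``pairs inside $K$'' does not apply as written. The repair is small but must be made explicit: let $x_1\cdots x_t$ be the initial permutation of $w|_K$ (this needs no alternation to define); for $x_i,x_j\in K$ adjacent in $G$ the two letters do alternate in $w$ in the same relative order as their first occurrences, so your count of exactly two occurrences of the pattern $11$ at the two junctions goes through; for $x_i,x_j\in K$ already non-adjacent in $G$, the word $ww|_{\{x_i,x_j\}}$ has at least two occurrences of the pattern $11$ and prepending or appending letters cannot destroy them, so no further check is needed. (This also explains why the paper uses the final permutation of $w|_K$ for the suffix while you reuse the initial one: for alternating pairs the two restrictions coincide, so the difference is cosmetic.) A minor additional point: the cyclic shift is neither needed nor quite sufficient to force $w|_K=(x_1\cdots x_t)^p$; in the clique case this form holds automatically up to relabelling the $x_i$, because pairwise alternation of uniformly occurring letters fixes a single total order in every round.
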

\begin{proof} 
Let $w$ be a uniform word-representant of $G$ that exists by Theorem~\ref{unif-wr}.
Let $p$ be the reverse of the initial permutation of $w|_K$, and let $q$ be the reverse of the final permutation of $w|_K$.
Note that if $K$ is a clique in $G$, then $p = q$.
It is straightforward to check that the word $w' := pwwq$ $1$-$11$-represents the graph $G_K$.
\end{proof}

\begin{theorem}\label{star-thm}
Let $G$ be a word-representable graph, $v$ be a vertex in $G$, and $N$ be a set of some (not necessarily all) neighbors of $v$ in $G$.
Let $G_N$ be the graph obtained from $G$ by removing the edges $\{uv : u\in N\}$.
Then, $G_N$ is $1$-$11$-representable.
\end{theorem}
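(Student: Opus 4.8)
The plan is to mimic the construction in Theorem~\ref{edge-thm}, but replace the single prefix/suffix pair $yx$ by a prefix (and mirrored suffix) that encodes exactly which edges at $v$ we want to destroy. First I would invoke Theorem~\ref{unif-wr} to get a uniform word-representant $w$ of $G$. Let $N=\{u_1,\ldots,u_t\}$ be the neighbors of $v$ whose edges to $v$ are to be removed. The key observation is the one used in Theorem~\ref{edge-thm}: if we prepend a factor $u\,v$ (and append $v\,u$) to $ww$, then for the pair $\{u,v\}$ we turn an alternating subword $vuvu\cdots uv$ into one of the form $uv\cdot vu\cdots uv\cdot vu$, creating the two $11$-occurrences needed to kill the edge, while for any other pair $\{x,y\}$ with $x,y\ne v$ the prefix contributes at most one extra $11$ (or none), so edges among the other vertices survive. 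The only delicate point is that a single prefix must simultaneously place each $u_i$ on the ``wrong side'' of $v$ for all $i$ while not disturbing the mutual relations of the $u_i$'s or of other vertices with $v$'s remaining neighbors.

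Concretely, I would proceed as follows. By Lemma~\ref{begin-end} we may assume $w$ begins with $v$ and ends with $v$ (adjusting uniformity by prepending copies of $\pi(w)$ if needed, which does not change the represented graph). Let $p$ be the reverse of the initial permutation of $w|_{N\cup\{v\}}$; because $w$ starts with $v$, the letter $v$ is the \emph{last} letter of $p$, i.e.\ $p = r(\pi(w|_{N}))\,v$. Symmetrically let $q = v\,\pi(w|_N)$ (using that $w$ ends with $v$, so in the final permutation of $w|_{N\cup\{v\}}$ the letter $v$ comes last, and its reverse puts $v$ first). Then I claim $w' := p\,w\,w\,q$ is a $1$-$11$-representant of $G_N$. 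This is the natural common generalization of Theorem~\ref{edge-thm} (take $N=\{y\}$) and is in the same spirit as the $pwwq$ construction of Theorem~\ref{clique-thm}.

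The verification splits into three kinds of pairs. (i) For $\{u_i,v\}$ with $u_i\in N$: in $ww$ the pair is alternating; since $w$ both starts and ends with $v$, we have $ww|_{\{u_i,v\}} = v\,u_i\,v\cdots v\,u_i\,v$, and $u_i$ precedes $v$ in $p$ (so $p|_{\{u_i,v\}} = u_i v$) and $u_i$ follows $v$ in $q$ (so $q|_{\{u_i,v\}} = v u_i$), giving $w'|_{\{u_i,v\}} = u_i v\, v\, u_i v\cdots v u_i v\, v u_i$ with exactly two $11$-occurrences; hence $u_iv\notin E(G_N)$, as desired. (ii) For $\{x,v\}$ with $x$ a neighbor of $v$ in $G$ not in $N$ (or $x=v$ excluded): here $x$ does not occur in $p$ or $q$, so $w'|_{\{x,v\}}$ is just a cyclic-shift-like rearrangement — more precisely $v$ gets two extra copies but $x$ none — and I need $ww|_{\{x,v\}} = xv\cdots$ to remain with at most one $11$. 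Since $ww|_{\{x,v\}}$ is alternating of the form $v x v\cdots v x v$ (again using that $w$ starts and ends with $v$), prepending the single letter $v$ from $p$ and appending the single letter $v$ from $q$ yields $vvxv\cdots vxvv$, which has two $11$'s — so this is a problem, and it is the main obstacle: I must ensure the two extra $v$'s land adjacent to an $x$-separated region, not next to each other. The fix is to not let $p$ and $q$ consist of the whole block $w|_{N\cup\{v\}}$ padded with $v$, but rather to choose $p = r(\pi(w|_N))\,v$ only if that makes $v$'s new occurrences interior; the cleanest route, and the one I would actually write, is to first arrange (via Lemma~\ref{begin-end}) that $w$ starts with $u_1u_2\cdots u_t v$ in that order and ends with $v u_t\cdots u_1$, so that $p := r(\pi(w|_N))\, v = u_t\cdots u_1 v$ is literally already a prefix of $w$; then $pw$ has the same induced subword on every pair not involving some $u_i$ as $w$ itself does on that pair read with a harmless repeated prefix, and only the pairs $\{u_i,v\}$ and $\{u_i,u_j\}$ change. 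One then checks $\{u_i,u_j\}$ stays an edge/non-edge exactly as in $w$ (the prefix $u_t\cdots u_1$ contributes a block $u_j u_i$ or $u_i u_j$ plus the original, at most one extra $11$, which is absorbed by doubling $w$), and $\{x,y\}$ with both $\ne v$ and not both in $N$ behaves as in Theorem~\ref{edge-thm}. (iii) For pairs entirely inside $V(G)\setminus(N\cup\{v\})$ nothing changes beyond harmless doubling.

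The heart of the argument, and where I would spend the most care, is step (ii): showing the removed-edge prefix does not accidentally disconnect $v$ from its \emph{retained} neighbors. The reason this works is that by choosing $p$ to be an \emph{initial segment already present in $w$} (after a Lemma~\ref{begin-end} normalization that lines up $N\cup\{v\}$ at both ends of $w$ in compatible orders), the word $pww q$ differs from $wwq$ only in that the first block $w$ is preceded by a copy of its own prefix; for any pair $\{x,y\}$ not both in $N\cup\{v\}$ this adds at most one $11$, and the doubling $ww$ (as in Theorem~\ref{edge-thm}) guarantees that one extra $11$ is tolerated for edges and irrelevant for non-edges. Thus the only pairs whose adjacency status actually flips are exactly $\{u_i,v\}$ for $u_i\in N$, which is the assertion $G'=G_N$.
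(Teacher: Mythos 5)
Your overall strategy---surrounding $ww$ by a short prefix and suffix that flip the relative order of $v$ and each $u\in N$ while leaving all other pairs essentially untouched---is the same as the paper's, which takes a uniform representant $w$, cyclically shifts it (Theorem~\ref{cyclic-shift}) so that $v$ is the first letter, and forms a word of the shape $p\,ww\,p$ where $p$ is a permutation of $N\cup\{v\}$ ending in $v$. However, your execution has two genuine gaps.

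First, the normalization underlying your ``fix'' is not available. Lemma~\ref{begin-end} controls only the first and the last letter of a representant; it does not let you force $w$ to begin with the block $u_1u_2\cdots u_tv$ \emph{and} end with $vu_t\cdots u_1$. Indeed this is impossible in general: if $u_1\sim u_2$, then $w|_{\{u_1,u_2\}}$ must alternate, so already for $G=K_3$ with $N=\{u_1,u_2\}$ no representant of $G$ can start with $u_1u_2\cdots$ and end with $\cdots u_2u_1$. Moreover, even granting the normalization, $r(\pi(w|_N))\,v=u_t\cdots u_1v$ is \emph{not} a prefix of a word beginning with $u_1\cdots u_tv$; the order is reversed.

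Second, and this is the real crux, the pairs $\{u_i,u_j\}$ inside $N$ are not verified. You assert that the extra occurrence of the pattern $11$ contributed by the prefix is ``absorbed by doubling $w$'', but doubling absorbs nothing: for an adjacent pair the subword of $ww$ is alternating with \emph{zero} occurrences and can tolerate at most one extra, while the prefix and the suffix can each contribute one. Concretely, if both are taken to be $r(\pi(w|_N))\,v$, then for adjacent $u_i,u_j\in N$ one gets $u_ju_i\,(u_iu_j)^{2m}\,u_ju_i$, which has two occurrences ($u_iu_i$ and $u_ju_j$) and wrongly deletes the edge $u_iu_j$. The correct choice is to orient the $N$-block \emph{consistently with $w$}: take $w$ uniform with $v$ as its first letter, let $\rho=\pi(w|_N)$ (not its reverse), and set $w'=\rho\, v\,ww\,\rho\, v$. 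Then for $u\in N$ the induced subword is $uv\,(vu)^{2m}\,uv$ with exactly two occurrences (edge deleted); for adjacent $u_i,u_j\in N$ it is $u_iu_j\,(u_iu_j)^{2m}\,u_iu_j$ with none (edge kept); for a retained neighbour $x$ of $v$ it is $v\,(vx)^{2m}\,v$ with exactly one (edge kept---note that placing $v$ only at the \emph{start} of $w$, rather than at both ends as in your first attempt, is what makes the trailing $v$ harmless here); and every other pair gains at most one occurrence over $ww$. Incidentally, your difficulty in case (ii) was self-inflicted: a uniform $w$ cannot both start and end with $v$ once $v$ has a neighbour, so that normalization should be abandoned rather than patched.
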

\begin{proof} 
Let $N = \{v_1,\dots,v_k\}$ and $w$ be a uniform word-representant of $G$.
Since $w$ is uniform, by Theorem~\ref{cyclic-shift}, we can assume that $v$ is the first letter in $w$.
Without loss of generality, assume that $v_1 \dots v_k$ is the initial permutation of $w|_N$.
Then, it is easy to check that the word $w' := v_k \dots v_1 v ww v_k \dots v_1 v$ $1$-$11$-represents the graph $G_N$.
\end{proof}

%
%

\section{1-11-representing non-word-representable graphs}\label{repr-non-repr-sec}

All graphs on at most five vertices are word-representable, and there is only one non-word-representable graph, the wheel $W_5$, on six vertices (see Figure~\ref{W5-fig}). Also, there are 25 non-word-representable graphs on 7 vertices, which are shown in Figure~\ref{25-non-repr}. 

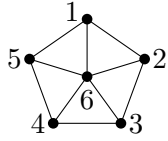
\begin{figure}[h]

\begin{center}
\begin{tikzpicture}[scale=0.9]
\draw (0,0)node[xshift=-0.2cm]{{\small 4}}node[pointV]{} --++(1,0)node[xshift=0.2cm]{{\small 3}}node[pointV]{}--++(0.35,0.95)node[xshift=0.2cm]{{\small 2}}node[pointV]{}
      --++(-0.85,0.59)node[yshift=0.1cm,xshift=-0.2cm]{{\small 1}}node[pointV]{}--++(-0.85,-0.59)node[xshift=-0.2cm]{{\small 5}}node[pointV]{}
      --++(0.35,-0.95)
      --++(0.5,0.69)node[yshift=-0.3cm]{{\small 6}}node[pointV]{}
      --+(0.5,-0.69)
      ++(0,0)--+(-0.85,0.26)
      ++(0,0)--+(0.85,0.26)
      ++(0,0)--++(0,0.85);

\end{tikzpicture}

\caption{The wheel graph $W_5$}\label{W5-fig}
\end{center}
\end{figure}

The following theorem shows that the notion of $k$-$11$-representability allows us to enlarge the class of word-representable graphs ($\mathcal{G}^{(0)}$), still by using alternating properties of letters in words.

\begin{theorem}\label{strict-inclusion-thm} We have $\mathcal{G}^{(0)}\subsetneq \mathcal{G}^{(1)}$. \end{theorem}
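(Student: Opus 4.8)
\section*{Proof proposal for Theorem~\ref{strict-inclusion-thm}}

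The plan is to produce a single graph lying in $\mathcal{G}^{(1)}$ but not in $\mathcal{G}^{(0)}$; since $\mathcal{G}^{(0)}\subseteq\mathcal{G}^{(1)}$ by Theorem~\ref{basic-inclusion}, this already gives the strict inclusion. The natural witness is the wheel $W_5$ of Figure~\ref{W5-fig}: it is recorded above as the unique minimal non-word-representable graph, so $W_5\notin\mathcal{G}^{(0)}$, and it only remains to check that $W_5\in\mathcal{G}^{(1)}$.

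For the membership I would invoke Theorem~\ref{rem-vert-perm-repr-thm}: if $G\setminus v$ is a comparability graph, then $G\in\mathcal{G}^{(1)}$. Accordingly I would delete a \emph{rim} vertex of $W_5$, say vertex $1$ in the labelling of Figure~\ref{W5-fig}. The resulting graph $W_5\setminus 1$ is the \emph{gem}: the path $2\text{-}3\text{-}4\text{-}5$ together with a vertex $6$ adjacent to each of $2,3,4,5$. One then exhibits a transitive orientation of it: orient the path edges from the bipartition class $\{2,4\}$ to $\{3,5\}$, i.e. $2\to 3$, $4\to 3$, $4\to 5$ (this creates no directed path of length two within the path), and orient all four spokes out of $6$, i.e. $6\to 2,\ 6\to 3,\ 6\to 4,\ 6\to 5$. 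The only directed paths of length two are then $6\to 2\to 3$, $6\to 4\to 3$, $6\to 4\to 5$, and in each case the required transitivity edge ($6\to 3$ or $6\to 5$) is present. Hence $W_5\setminus 1$ is a comparability graph, Theorem~\ref{rem-vert-perm-repr-thm} yields $W_5\in\mathcal{G}^{(1)}$, and combining this with $W_5\notin\mathcal{G}^{(0)}$ and Theorem~\ref{basic-inclusion} gives $\mathcal{G}^{(0)}\subsetneq\mathcal{G}^{(1)}$.

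There is no genuinely hard step; the only thing one must get right is \emph{which} vertex of $W_5$ to delete. Deleting the hub leaves $C_5$, which is not a comparability graph, so Theorem~\ref{rem-vert-perm-repr-thm} does not apply to that choice, whereas deleting a rim vertex works as above. As an alternative route that does delete the hub, one could use Theorem~\ref{rem-vert-gen-thm} with $k=2$: the rim $C_5$ is a circle graph and hence, by Theorem~\ref{circle-class-thm}, admits a $2$-uniform word-representant, so Theorem~\ref{rem-vert-gen-thm} again gives $W_5\in\mathcal{G}^{(1)}$. Either way, the entire content of the argument is locating a sufficiently nice vertex-deleted subgraph of $W_5$, together with the already-known fact that $W_5$ itself is not word-representable.
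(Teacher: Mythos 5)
Your proof is correct, and it reaches the same witness ($W_5$) by a different route. The paper also reduces the theorem to showing $W_5\in\mathcal{G}^{(1)}$, but it does so by writing down an explicit word: it takes the $2$-uniform representant $w=1521324354$ of the rim cycle, forms $r(\pi(w))w=432511521324354$ (which $1$-$11$-represents the cycle by Lemma~\ref{extension-lem}), and then inserts the hub letter $6$ to obtain $u=4325161521324354$, checking directly that $u$ $1$-$11$-represents $W_5$. You instead delete a rim vertex, observe that the resulting gem is a comparability graph via an explicit transitive orientation (your orientation checks out: the only directed $2$-paths start at $6$ and their transitive edges are present), and invoke Theorem~\ref{rem-vert-perm-repr-thm}; your remark that deleting the hub fails for that theorem because $C_5$ is not a comparability graph, but succeeds via Theorem~\ref{rem-vert-gen-thm} with $k=2$, is also right, and that second route is essentially the paper's explicit construction packaged as a general lemma. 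What the paper's concrete word buys is a short ($3$-uniform) representant of $W_5$ that is reused later (Section~\ref{repr-non-repr-sec} relies on having $1$-$11$-represented all graphs on at most six vertices); what your argument buys is a cleaner derivation from the structural machinery already established, with no hand verification of a specific word. There is no circularity: both Theorem~\ref{rem-vert-gen-thm} and Theorem~\ref{rem-vert-perm-repr-thm} are proved independently of this statement.
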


\begin{proof} By Theorem~\ref{basic-inclusion}, we have $\mathcal{G}^{(0)}\subseteq \mathcal{G}^{(1)}$. To show that the inclusion is strict, we give a word 1-11-representing the non-word-representable wheel graph $W_5$ in Figure~\ref{W5-fig}.  We start with 0-11-representing the cycle graph induced by all vertices but the vertex 6 by the 2-uniform word $w=1521324354$. This word, and a generic approach to find it, is found on page 36 in \cite{KL}. Note that the initial permutation $\pi(w)$ is $15234$, and thus, by Lemma~\ref{extension-lem}, the word $r(\pi(w))w=432511521324354$ 1-11-represents the cycle graph. Inserting a 6 in $w$ to obtain $u=4325161521324354$ gives a word 1-11-representing $W_5$ (which is easy to see). Note that the word $6u6$ gives a 3-uniform 1-$11$-representant of $W_5$.  \end{proof}

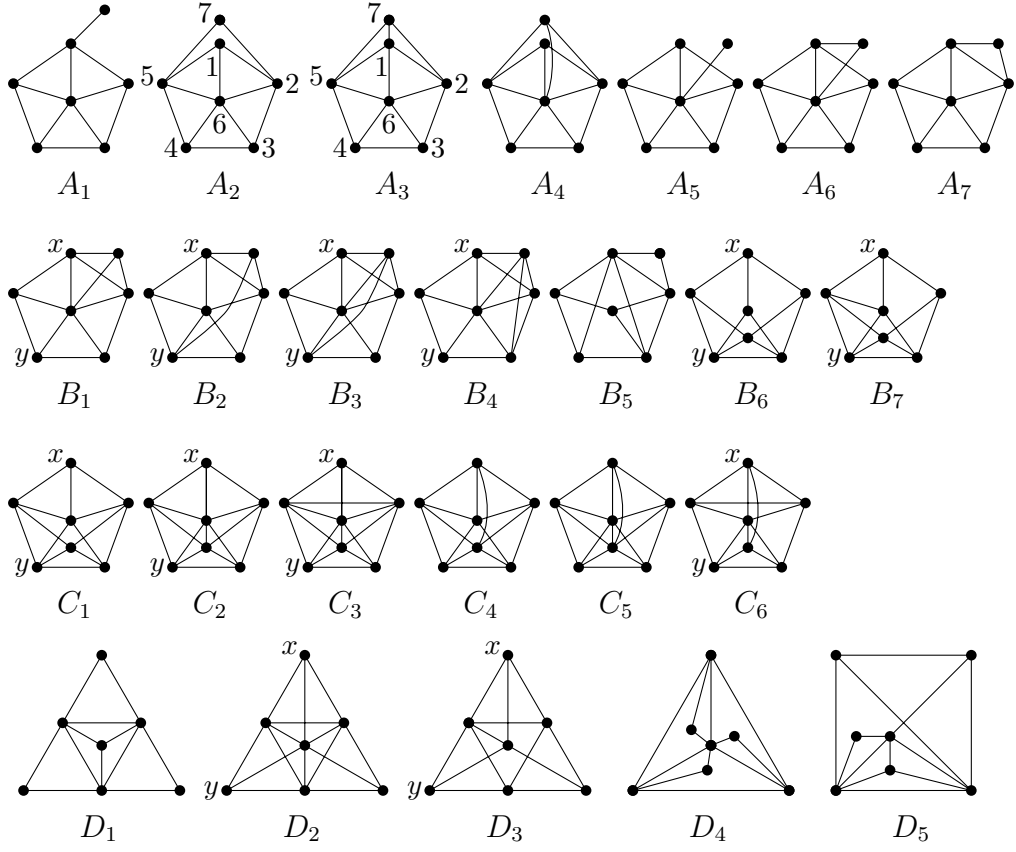
\begin{figure}[h]

\begin{center}
\begin{tikzpicture}[scale=0.9]
\draw (0,0)node[yshift=-0.5cm,xshift=0.5cm]{$A_1$}node[pointV]{} --++(1,0)node[pointV]{}--++(0.35,0.95)node[pointV]{}
      --++(-0.85,0.59)node[pointV]{}--++(-0.85,-0.59)node[pointV]{}
      --++(0.35,-0.95)
      --++(0.5,0.69)node[pointV]{}
      --+(0.5,-0.69)
      ++(0,0)--+(-0.85,0.26)
      ++(0,0)--+(0.85,0.26)
      ++(0,0)--++(0,0.85)
      --+(0.5,0.5)node[pointV]{};

\draw (2.2,0)node[yshift=-0.5cm,xshift=0.5cm]{$A_2$}node[xshift=-0.2cm]{{\small 4}}node[pointV]{} --++(1,0)node[xshift=0.2cm]{{\small 3}}node[pointV]{}--++(0.35,0.95)node[xshift=0.2cm]{{\small 2}}node[pointV]{}
      --++(-0.85,0.59)node[yshift=-0.3cm,xshift=-0.1cm]{{\small 1}}node[pointV]{}--++(-0.85,-0.59)node[yshift=0.1cm,xshift=-0.2cm]{{\small 5}}node[pointV]{}
      --++(0.35,-0.95)
      --++(0.5,0.69)node[yshift=-0.3cm]{{\small 6}}node[pointV]{}
      --+(0.5,-0.69)
      ++(0,0)--+(-0.85,0.26)
      --+(0,1.2)node[yshift=0.1cm,xshift=-0.2cm]{{\small 7}}node[pointV]{}
      ++(0,0)--+(0.85,0.26)
      --+(0,1.2)node[pointV]{}
      ++(0,0)--++(0,0.85)
      ;
\draw (4.7,0)node[yshift=-0.5cm,xshift=0.5cm]{$A_3$}node[xshift=-0.2cm]{{\small 4}}node[pointV]{} --++(1,0)node[xshift=0.2cm]{{\small 3}}node[pointV]{}--++(0.35,0.95)node[xshift=0.2cm]{{\small 2}}node[pointV]{}
      --++(-0.85,0.59)node[yshift=-0.3cm,xshift=-0.1cm]{{\small 1}}node[pointV]{}--++(-0.85,-0.59)node[yshift=0.1cm,xshift=-0.2cm]{{\small 5}}node[pointV]{}
      --++(0.35,-0.95)
      --++(0.5,0.69)node[yshift=-0.3cm]{{\small 6}}node[pointV]{}
      --+(0.5,-0.69)
      ++(0,0)--+(-0.85,0.26)
      --+(0,1.2)node[pointV]{}
      ++(0,0)--+(0.85,0.26)
      --+(0,1.2)node[yshift=0.1cm,xshift=-0.2cm]{{\small 7}}node[pointV]{}
      ++(0,0)--++(0,0.85)
      --+(0,0.35);
      
\draw (7,0)node[yshift=-0.5cm,xshift=0.5cm]{$A_4$}node[pointV]{} --++(1,0)node[pointV]{}--++(0.35,0.95)node[pointV]{}
      --++(-0.85,0.59)node[pointV]{}--++(-0.85,-0.59)node[pointV]{}
      --++(0.35,-0.95)
      --++(0.5,0.69)node[pointV]{}
      --+(0.5,-0.69)
      ++(0,0)--+(-0.85,0.26)
      --+(0,1.2)node[pointV]{}
      ++(0,0)--+(0.85,0.26)
      --+(0,1.2)
      ++(0,0)--+(0,0.85)
      ++(0,0) .. controls +(0.15,0.3) and +(0.15,-0.3) ..  +(0,1.2);

\draw (9,0)node[yshift=-0.5cm,xshift=0.5cm]{$A_5$}node[pointV]{} --++(1,0)node[pointV]{}--++(0.35,0.95)node[pointV]{}
      --++(-0.85,0.59)node[pointV]{}--++(-0.85,-0.59)node[pointV]{}
      --++(0.35,-0.95)
      --++(0.5,0.69)node[pointV]{}
      --+(0.5,-0.69)
      ++(0,0)--+(-0.85,0.26)
      ++(0,0)--+(0.85,0.26)
      ++(0,0)--+(0,0.85)
      ++(0,0)--+(0.7,0.85)node[pointV]{}
      ;

\draw (11,0)node[yshift=-0.5cm,xshift=0.5cm]{$A_6$}node[pointV]{} --++(1,0)node[pointV]{}--++(0.35,0.95)node[pointV]{}
      --++(-0.85,0.59)node[pointV]{}--++(-0.85,-0.59)node[pointV]{}
      --++(0.35,-0.95)
      --++(0.5,0.69)node[pointV]{}
      --+(0.5,-0.69)
      ++(0,0)--+(-0.85,0.26)
      ++(0,0)--+(0.85,0.26)
      ++(0,0)--+(0,0.85)
      ++(0,0)--++(0.7,0.85)node[pointV]{}
      --+(-0.7,0);
      
\draw (13,0)node[yshift=-0.5cm,xshift=0.5cm]{$A_7$}node[pointV]{} --++(1,0)node[pointV]{}--++(0.35,0.95)node[pointV]{}
      --++(-0.85,0.59)node[pointV]{}--++(-0.85,-0.59)node[pointV]{}
      --++(0.35,-0.95)
      --++(0.5,0.69)node[pointV]{}
      --+(0.5,-0.69)
      ++(0,0)--+(-0.85,0.26)
      ++(0,0)--+(0.85,0.26)
      ++(0,0)--+(0,0.85)--+(0.7,0.85)
      ++(0,0)+(0.7,0.85)node[pointV]{}
      --+(0.85,0.26);
      
%
\draw (0,-3.1)node[yshift=-0.5cm,xshift=0.5cm]{$B_1$}node[xshift=-0.2cm]{{\small $y$}}node[pointV]{} --++(1,0)node[pointV]{}--++(0.35,0.95)node[pointV]{}
      --++(-0.85,0.59)node[yshift=0.1cm,xshift=-0.2cm]{{\small $x$}}node[pointV]{}--++(-0.85,-0.59)node[pointV]{}
      --++(0.35,-0.95)
      --++(0.5,0.69)node[pointV]{}
      --+(0.5,-0.69)
      ++(0,0)--+(-0.85,0.26)
      ++(0,0)--+(0.85,0.26)
      ++(0,0)--+(0,0.85)--+(0.7,0.85)
      ++(0,0)--+(0.7,0.85)node[pointV]{}
      --+(0.85,0.26);
      
\draw (2,-3.1)node[yshift=-0.5cm,xshift=0.5cm]{$B_2$}node[xshift=-0.2cm]{{\small $y$}}node[pointV]{} --++(1,0)node[pointV]{}--++(0.35,0.95)node[pointV]{}
      --++(-0.85,0.59)node[yshift=0.1cm,xshift=-0.2cm]{{\small $x$}}node[pointV]{}--++(-0.85,-0.59)node[pointV]{}
      --++(0.35,-0.95)
      --++(0.5,0.69)node[pointV]{}
      --+(0.5,-0.69)
      ++(0,0)--+(-0.85,0.26)
      ++(0,0)--+(0.85,0.26)
      ++(0,0)--+(0,0.85)--+(0.7,0.85)
      ++(0,0)+(0.7,0.85)node[pointV]{}
      --+(0.85,0.26)
      plot [smooth] coordinates { +(-0.5,-0.69)  +(0.3,0)  +(0.7,0.85)};  
      
\draw (4,-3.1)node[yshift=-0.5cm,xshift=0.5cm]{$B_3$}node[xshift=-0.2cm]{{\small $y$}}node[pointV]{} --++(1,0)node[pointV]{}--++(0.35,0.95)node[pointV]{}
      --++(-0.85,0.59)node[yshift=0.1cm,xshift=-0.2cm]{{\small $x$}}node[pointV]{}--++(-0.85,-0.59)node[pointV]{}
      --++(0.35,-0.95)
      --++(0.5,0.69)node[pointV]{}
      --+(0.5,-0.69)
      ++(0,0)--+(-0.85,0.26)
      ++(0,0)--+(0.85,0.26)
      ++(0,0)--+(0,0.85)--+(0.7,0.85)
      ++(0,0)--+(0.7,0.85)node[pointV]{}
      --+(0.85,0.26)
      plot [smooth] coordinates { +(-0.5,-0.69)  +(0.3,0)  +(0.7,0.85)};

\draw (6,-3.1)node[yshift=-0.5cm,xshift=0.5cm]{$B_4$}node[xshift=-0.2cm]{{\small $y$}}node[pointV]{} --++(1,0)node[pointV]{}--++(0.35,0.95)node[pointV]{}
      --++(-0.85,0.59)node[yshift=0.1cm,xshift=-0.2cm]{{\small $x$}}node[pointV]{}--++(-0.85,-0.59)node[pointV]{}
      --++(0.35,-0.95)
      --++(0.5,0.69)node[pointV]{}
      --+(0.5,-0.69)
      ++(0,0)--+(-0.85,0.26)
      ++(0,0)--+(0.85,0.26)
      ++(0,0)--+(0,0.85)--+(0.7,0.85)--+(0.5,-0.69)
      ++(0,0)--+(0.7,0.85)node[pointV]{}
      --+(0.85,0.26);
      
\draw (8,-3.1)node[yshift=-0.5cm,xshift=0.5cm]{$B_5$}node[pointV]{} --++(1,0)node[pointV]{}--++(0.35,0.95)node[pointV]{}
      --++(-0.85,0.59)node[pointV]{}--++(-0.85,-0.59)node[pointV]{}
      --++(0.35,-0.95) 
      ++(0.5,0.69)node[pointV]{}
      --+(0.85,0.26)
      ++(0,0)--+(-0.85,0.26)
      ++(0,0)--++(0.5,-0.69)
      --++(-0.5,1.54)--++(-0.5,-1.54)
      ++(0.5,1.54)--++(0.7,0)node[pointV]{}node[pointV]{}--++(0.15,-0.59);

\draw (10,-3.1)node[yshift=-0.5cm,xshift=0.5cm]{$B_6$}node[xshift=-0.2cm]{{\small $y$}}node[pointV]{} --++(1,0)node[pointV]{}--++(0.35,0.95)node[pointV]{}
      --++(-0.85,0.59)node[yshift=0.1cm,xshift=-0.2cm]{{\small $x$}}node[pointV]{}--++(-0.85,-0.59)node[pointV]{}
      --++(0.35,-0.95)
      --++(0.5,0.69)node[pointV]{}
      --+(0.5,-0.69)
      ++(0,0)--+(0,0.85)
      +(0,-0.4)node[pointV]{}--+(-0.85,0.26)
      +(0,-0.4)--+(0.85,0.26)
      +(0,-0.4)--+(-0.5,-0.69)
      +(0,-0.4)--+(0.5,-0.69);

\draw (12,-3.1)node[yshift=-0.5cm,xshift=0.5cm]{$B_7$}node[xshift=-0.2cm]{{\small $y$}}node[pointV]{} --++(1,0)node[pointV]{}--++(0.35,0.95)node[pointV]{}
      --++(-0.85,0.59)node[yshift=0.1cm,xshift=-0.2cm]{{\small $x$}}node[pointV]{}--++(-0.85,-0.59)node[pointV]{}
      --++(0.35,-0.95)
      --++(0.5,0.69)node[pointV]{}
      --+(0.5,-0.69)
      ++(0,0)--+(-0.85,0.26)
      ++(0,0)--+(0,0.85)
      +(0,-0.4)node[pointV]{}--+(-0.85,0.26)
      +(0,-0.4)--+(0.85,0.26)
      +(0,-0.4)--+(-0.5,-0.69)
      +(0,-0.4)--+(0.5,-0.69);
      
%
%
%
\draw (0,-6.2)node[yshift=-0.5cm,xshift=0.5cm]{$C_1$}node[xshift=-0.2cm]{{\small $y$}}node[pointV]{} --++(1,0)node[pointV]{}--++(0.35,0.95)node[pointV]{}
      --++(-0.85,0.59)node[yshift=0.1cm,xshift=-0.2cm]{{\small $x$}}node[pointV]{}--++(-0.85,-0.59)node[pointV]{}
      --++(0.35,-0.95)
      --++(0.5,0.69)node[pointV]{}
      --+(0.5,-0.69)
      ++(0,0)--+(-0.85,0.26)
      ++(0,0)--+(0.85,0.26)
      ++(0,0)--+(0,0.85)
      +(0,-0.4)node[pointV]{}--+(-0.85,0.26)
      +(0,-0.4)--+(0.85,0.26)
      +(0,-0.4)--+(-0.5,-0.69)
      +(0,-0.4)--+(0.5,-0.69);

\draw (2,-6.2)node[yshift=-0.5cm,xshift=0.5cm]{$C_2$}node[xshift=-0.2cm]{{\small $y$}}node[pointV]{} --++(1,0)node[pointV]{}--++(0.35,0.95)node[pointV]{}
      --++(-0.85,0.59)node[yshift=0.1cm,xshift=-0.2cm]{{\small $x$}}node[pointV]{}--++(-0.85,-0.59)node[pointV]{}
      --++(0.35,-0.95)
      --++(0.5,0.69)node[pointV]{}
      --+(0.5,-0.69)
      ++(0,0)--+(-0.85,0.26)
      ++(0,0)--+(0.85,0.26)
      ++(0,0)--+(0,0.85)
      --+(0,-0.4)node[pointV]{}--+(-0.85,0.26)
      +(0,-0.4)--+(0.85,0.26)
      +(0,-0.4)--+(-0.5,-0.69)
      +(0,-0.4)--+(0.5,-0.69);
      
\draw (4,-6.2)node[yshift=-0.5cm,xshift=0.5cm]{$C_3$}node[xshift=-0.2cm]{{\small $y$}}node[pointV]{} --++(1,0)node[pointV]{}--++(0.35,0.95)node[pointV]{}
      --++(-0.85,0.59)node[yshift=0.1cm,xshift=-0.2cm]{{\small $x$}}node[pointV]{}--++(-0.85,-0.59)node[pointV]{}
      --++(0.35,-0.95)
      --++(0.5,0.69)node[pointV]{}
      --+(0.5,-0.69)
      ++(0,0)--+(-0.85,0.26)
      ++(0,0)--+(0.85,0.26)
      ++(0,0)--+(0,0.85)
      --+(0,-0.4)node[pointV]{}--+(-0.85,0.26)
      +(0,-0.4)--+(0.85,0.26)
      +(0,-0.4)--+(-0.5,-0.69)
      +(0,-0.4)--+(0.5,-0.69)
      ++(0,0)+(-0.85,0.26)
      --+(0.85,0.26);

\draw (6,-6.2)node[yshift=-0.5cm,xshift=0.5cm]{$C_4$}node[pointV]{} --++(1,0)node[pointV]{}--++(0.35,0.95)node[pointV]{}
      --++(-0.85,0.59)node[pointV]{}--++(-0.85,-0.59)node[pointV]{}
      --++(0.35,-0.95)
      --++(0.5,0.69)node[pointV]{}
      --+(0.5,-0.69)
      ++(0,0)--+(-0.85,0.26)
      ++(0,0)--+(0.85,0.26)
      ++(0,0)--+(0,0.85)
      +(0,-0.4)node[pointV]{}--+(-0.85,0.26)
      +(0,-0.4)--+(0.85,0.26)
      +(0,-0.4)--+(-0.5,-0.69)
      +(0,-0.4)--+(0.5,-0.69)
      +(0,-0.4) .. controls +(0.2,0.3) and +(0.2,-0.5) .. +(0,0.85);

\draw (8,-6.2)node[yshift=-0.5cm,xshift=0.5cm]{$C_5$}node[pointV]{} --++(1,0)node[pointV]{}--++(0.35,0.95)node[pointV]{}
      --++(-0.85,0.59)node[pointV]{}--++(-0.85,-0.59)node[pointV]{}
      --++(0.35,-0.95)
      --++(0.5,0.69)node[pointV]{}
      --+(0.5,-0.69)
      ++(0,0)--+(-0.85,0.26)
      ++(0,0)--+(0.85,0.26)
      ++(0,0)--+(0,0.85)
      --+(0,-0.4)node[pointV]{}--+(-0.85,0.26) 
      +(0,-0.4)--+(0.85,0.26)
      +(0,-0.4)--+(-0.5,-0.69)
      +(0,-0.4)--+(0.5,-0.69)
      +(0,-0.4) .. controls +(0.2,0.3) and +(0.2,-0.5) .. +(0,0.85);

\draw (10,-6.2)node[yshift=-0.5cm,xshift=0.5cm]{$C_6$}node[xshift=-0.2cm]{{\small $y$}}node[pointV]{} --++(1,0)node[pointV]{}--++(0.35,0.95)node[pointV]{}
      --++(-0.85,0.59)node[yshift=0.1cm,xshift=-0.2cm]{{\small $x$}}node[pointV]{}--++(-0.85,-0.59)node[pointV]{}
      --++(0.35,-0.95)
      --++(0.5,0.69)node[pointV]{}
      --+(0.5,-0.69)
      ++(0,0)--+(-0.85,0.26)
      ++(0,0)--+(0.85,0.26)
      ++(0,0)--+(0,0.85)
      ++(0,0)--+(0,-0.4)node[pointV]{}
      +(0,-0.4)--+(-0.5,-0.69)
      +(0,-0.4)--+(0.5,-0.69)
      ++(0,0)+(-0.85,0.26)
      --+(0.85,0.26)
      +(0,-0.4) .. controls +(0.2,0.3) and +(0.2,-0.5) .. +(0,0.85);


%
%
 \draw (0-0.2,-9.5) node[yshift=-0.5cm,xshift=1cm]{$D_1$} node[pointV]{}-- ++(0.5774,1) node[pointV]{}--++(0.5774,1)  node[pointV]{}
                   --++(0.5774,-1)  node[pointV]{}--++(0.5774,-1)  node[pointV]{}
                   --++(-2*0.5774,0)--++(-0.5774,1)--++(2*0.5774,0)
                   --++(-0.5774,-1)node[pointV]{}--+(-2*0.5774,0) 
                   ++(0,0)--++(0,2/3)node[pointV]{} --+(-0.5774,1/3)
                   ++(0,0)--+(0.5774,1/3) ;                  
                   
\draw (3-0.2,-9.5) node[yshift=-0.5cm,xshift=1cm]{$D_2$} node[xshift=-0.2cm]{{\small $y$}}node[pointV]{}-- ++(0.5774*4,0) node[pointV]{}--++(-0.5774*2,2) node[yshift=0.1cm,xshift=-0.2cm]{{\small $x$}} node[pointV]{}
                   --++(-0.5774*2,-2) --++(0.5774*2,2/3)  node[pointV]{}
                   --++(0.5774,1/3) node[pointV]{}--++(-0.5774*2,0) node[pointV]{}
                   --++(0.5774,-1) node[pointV]{}--+(0.5774,1)
                   ++(0,0)--+(0,2)
                   +(0.5774*2,0)--+(-0.5774,1);

\draw (6-0.2,-9.5) node[yshift=-0.5cm,xshift=1cm]{$D_3$} node[xshift=-0.2cm]{{\small $y$}}node[pointV]{}-- ++(0.5774*4,0) node[pointV]{}--++(-0.5774*2,2)  node[yshift=0.1cm,xshift=-0.2cm]{{\small $x$}}node[pointV]{}
                   --++(-0.5774*2,-2) --++(0.5774*2,2/3)  node[pointV]{}
                   --++(-0.5774,1/3) node[pointV]{}
                   --++(0.5774,-1) node[pointV]{}--++(0.5774,1) node[pointV]{}--++(-0.5774*2,0) node[pointV]{}
                   ++(0.5774,-1/3)--+(0,4/3)
                   ++(0,0)--++(0.5774*2,-2/3);

\draw (9-0.2,-9.5) node[yshift=-0.5cm,xshift=1cm]{$D_4$};

\foreach \r in {0, 120, 240}{
\draw[shift={(9-0.2,-9.5)},rotate around={\r:(0.5774*2,2/3)}] (0,0)  node[pointV]{} --+(0.5774*2,2/3)  node[pointV]{} --+(1.1,0.3)  node[pointV]{}-- ++(0,0)
                   --++(0.5774*4,0)  node[pointV]{};

}

\draw (12-0.2,-9.5)node[yshift=-0.5cm,xshift=1cm]{$D_5$} node[pointV]{}-- ++(2,0) node[pointV]{}--++(0,2) node[pointV]{}
       -- ++(-2,0) node[pointV]{}--++(0,-2)  node[pointV]{}
       --+(0.8,0.8) node[pointV]{}--+(0.3,0.8) node[pointV]{}--+(0,0)
       --+(0.8,0.3) node[pointV]{}--+(2,0) --+(0.8,0.8) --+(0.8,0.3)
       +(0.8,0.8)--+(2,2)
       +(2,0)--+(0,2)
       ;   

\end{tikzpicture}
\caption{The 25  non-isomorphic non-word-representable graphs on 7 vertices}\label{25-non-repr}
\end{center}
\end{figure}

We do not know whether $\mathcal{G}^{(1)}$ coincides with the class of all graphs, but at least we can show that all 25 graphs in Figure~\ref{25-non-repr} are 1-$11$-representable, which we do next. We will use the fact that all graphs on at most six vertices are 1-$11$-representable, which follows from the proof of Theorem~\ref{strict-inclusion-thm}, where we 1-11-represent the only non-word-representable graph on six vertices.

The graphs $A_1$ and $A_5$ are 1-$11$-representable by Theorem~\ref{degree-1-thm}, since they have a vertex of degree 1. Theorem~\ref{same-neighbourhood-thm} can be applied to the graphs $A_4$, $C_4$ and $C_5$ since each of these graphs have a pair of vertices whose neighbourhoods are the same up to removing these vertices. Further, Theorem~\ref{add-triangle-thm} gives 1-$11$-representability of the graphs $A_6$, $A_7$, $B_5$, $D_1$, $D_4$ and $D_5$ since each of these graphs has a triangle with a vertex of degree 2. Explicit easy-to-check 1-$11$-representants of the graphs $A_2$ and $A_3$ are, respectively, 437257161521324354 and 437251761521324354. For each graph $G$ of the remaining 12 graphs in Figure~\ref{25-non-repr}, we provide vertices $x$ and $y$ connecting which by an edge results in a word-representable graph $G_{xy}$, so that Theorem~\ref{edge-thm} can be applied (removing the edge $xy$ from $G_{xy}$) to see that $G$ is 1-$11$-representable. The fact that $G_{xy}$ is word-representable follows from it not being isomorphic to any of the graphs in Figure~\ref{25-non-repr}, where all non-word-representable graphs on seven vertices are presented. Alternatively, one can use the software packages \cite{Glen,Hans} to see that $G_{xy}$ is word-representable  (the software can produce an easy to check word representing $G_{xy}$).

\section{All graphs are $2$-$11$-representable}\label{sec-2-11-representable}
	A simple graph $G=(V,E)$ is {\em permutationally $k$-$11$-representable} if there is a $k$-$11$-representant $w$ of $G$ which is a concatenation of permutations of $V$.
	Such a word $w$ is called a {\em permutational $k$-$11$-representant} of $G$.
	In this section, we will prove that every graph is permutationally $2$-$11$-representable, by an inductive construction of a $2$-$11$-representant.	This result also implies that every graph is permutationally $k$-$11$-representable for any integer $k \geq 2$ by Lemma~\ref{extension-lem}.
	We still do not know whether every graph is $1$-$11$-representable or not.
	
	\medskip
	
	We begin with a simple observation.
	\begin{obs}\label{duplicate}
	If a word $w = w_1 P w_2$ $k$-$11$-represents a graph $G$ where $P$ is a permutation of $V(G)$, then the word $w' = w_1 P P w_2$ also $k$-$11$-represents the graph $G$.
	\end{obs}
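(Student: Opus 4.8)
The plan is to verify the defining property of a $k$-$11$-representant for $w'$ by comparing, one pair of letters at a time, the induced subword $w'|_{\{x,y\}}$ with $w|_{\{x,y\}}$. Fix distinct $x,y\in V(G)$. The crucial point I would isolate first is that since $P$ is a permutation of $V(G)$, each of $x$ and $y$ occurs exactly once in $P$, so $P|_{\{x,y\}}$ is a word of length two, i.e.\ either $xy$ or $yx$. Writing $a=w_1|_{\{x,y\}}$, $b=P|_{\{x,y\}}$ and $c=w_2|_{\{x,y\}}$, we have $w|_{\{x,y\}}=abc$ and $w'|_{\{x,y\}}=abbc$, where $b$ is an alternating word of length two whose first and last letters differ.

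Next I would show that $abc$ and $abbc$ contain the same number of occurrences of factors from $\{xx,yy\}$, that is, the same number of positions carrying two equal adjacent letters. Such positions split into three types: those internal to $a$ or to $c$ (visibly the same in both words); those internal to a copy of $b$ (none, since $b\in\{xy,yx\}$, whether there is one copy or two); and those straddling a junction. The junctions of $abc$ are $a|b$ and $b|c$, while those of $abbc$ are $a|b$, $b|b$ and $b|c$. An equal pair at $a|b$ depends only on the last letter of $a$ and the first letter of $b$, an equal pair at $b|c$ only on the last letter of $b$ and the first letter of $c$, and these are unchanged in passing from $abc$ to $abbc$; the extra junction $b|b$ contributes nothing because the last letter of $b$ differs from its first letter. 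Hence $w'|_{\{x,y\}}$ and $w|_{\{x,y\}}$ contain the same number of occurrences of the pattern $11$.

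Since $x$ and $y$ were an arbitrary pair, it follows that $w'|_{\{x,y\}}$ has at most $k$ occurrences of the pattern $11$ if and only if $w|_{\{x,y\}}$ does, i.e.\ if and only if $xy\in E(G)$; therefore $w'$ is a $k$-$11$-representant of $G$. There is no serious obstacle here: the only point that requires care is the hypothesis that $P$ is a permutation (not merely a word over $V(G)$), which is exactly what forces $P|_{\{x,y\}}$ to have length two, so that duplicating it produces neither an equal adjacent pair inside the duplicated block nor one at the newly created junction. Indeed, if $x$ (say) occurred twice in $P$, then $P|_{\{x,y\}}$ could both end and begin with $x$, and the duplication would create a spurious occurrence of $xx$.
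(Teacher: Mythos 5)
Your argument is correct and is exactly the natural justification of this fact: the paper itself states the observation without proof, and your pair-by-pair comparison showing that $w'|_{\{x,y\}}=abbc$ has the same number of occurrences of the pattern $11$ as $w|_{\{x,y\}}=abc$ (because $b=P|_{\{x,y\}}\in\{xy,yx\}$ contributes no equal adjacent pair inside the doubled block or at the new junction) is precisely the intended reasoning. Your closing remark on why $P$ must be a permutation correctly identifies the only point where the claim could fail.
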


In the proof of the following theorem, we use the following notation. For a pair of vertices $u$ and $v$ in $G$, we write $u \sim v$ if $u$ and $v$ are adjacent in $G$, and $u \nsim v$ otherwise. Also, for convenience, we separate permutations in a permutational $2$-$11$-representant by space.

	\begin{theorem}\label{all2-11}
	Let $G = (V,E)$ be a graph on $n \geq 2$ vertices.
	Then there is a permutational $2$-$11$-representant $w$ over the alphabet $V$ such that
		\begin{enumerate}[(a)]
		\item\label{aaa} $w$ is a concatenation of at most $f(n) = n^2 - n + 2$ permutations of $V$, and
		\item\label{bbb} for each $i \in V$, there exists a permutation $P$ in  $w$ that starts with $i$, i.e. $P = i Q$ where $Q$ is a permutation of $V \setminus \{i\}$. 
		\end{enumerate}
	\end{theorem}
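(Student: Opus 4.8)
The plan is to induct on $n$, building a permutational $2$-$11$-representant of $G$ on $n$ vertices from one on $n-1$ vertices together with the way the new vertex $n$ attaches. The base case $n=2$ is immediate: both the edge and the non-edge on $\{1,2\}$ are easily $2$-$11$-represented by a short concatenation of permutations of $\{1,2\}$ (e.g.\ $12\;21$ for the non-edge and $12\;12$ for the edge), and such words satisfy (b) since each of $1,2$ starts some permutation, while the number of permutations used is at most $f(2)=4$. For the inductive step, fix $G$ on vertex set $[n]$, let $v=n$, and let $H = G\setminus v$ on $[n-1]$. By induction there is a permutational $2$-$11$-representant $u$ of $H$ that is a concatenation of at most $f(n-1)=(n-1)^2-(n-1)+2$ permutations of $[n-1]$ and such that each $i\in[n-1]$ starts some permutation in $u$.

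The heart of the argument is to insert the letter $v$ into (copies of) $u$ so that the induced subword on each pair $\{i,j\}\subseteq[n-1]$ is unchanged in its count of the pattern $11$, while the induced subword on each pair $\{i,v\}$ has at most two $11$-occurrences iff $iv\in E$. The key device is property (b): since $i$ starts some permutation $P=iQ$ in $u$, we may locate one occurrence of $i$ that is immediately ``exposed'' at the start of a block, and we can place copies of $v$ relative to those exposed occurrences to control alternation of $v$ with each $i$ individually. Concretely, I would take the word $u$ and, at the boundaries between consecutive permutations, splice in new permutations each of which is (up to cyclic rotation) a permutation of $[n]$ placing $v$ in a prescribed position; using Observation~\ref{duplicate} freely to duplicate any permutation of $[n-1]$ we want, so that splicing $v$-carrying permutations between duplicated copies does not disturb the $\{i,j\}$-subwords beyond what the induction already guarantees. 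For a neighbor $i\in N(v)$ we want $v$ and $i$ to alternate across the whole word (contributing at most one $11$, in fact zero), which we can arrange by always writing $v$ adjacent to the exposed copy of $i$ at a block start; for a non-neighbor $i\notin N(v)$ we deliberately introduce exactly enough extra $v$'s (or $i$'s) near its exposed occurrence to force at least three $11$-occurrences in the $\{i,v\}$-subword. The bookkeeping that makes this work is that each vertex $i$ contributes a bounded number of extra permutations — this is where the quadratic bound $f(n)-f(n-1) = 2n-2$ comes from: we can afford roughly two extra permutations per vertex $i\in[n-1]$ to adjust the $\{i,v\}$-relationship, plus a constant number to install $v$ at the start of some permutation so that (b) holds for $i=n$ as well.

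I expect the main obstacle to be the simultaneous control of all the pair-subwords $\{i,v\}$: the copies of $v$ inserted to fix the relationship with one vertex $i$ also appear in the subword with every other vertex $j$, so one must argue that these insertions can be made ``locally'' (next to the exposed occurrence of $i$) in a way that adds only non-$11$ patterns, or a controlled number of $11$-patterns, to the $\{j,v\}$-subwords for $j\ne i$. Property (b) is precisely the tool that makes the insertions local and the argument go through, and keeping a running invariant — after processing vertices $1,\dots,t$, the partially built word is a concatenation of permutations of $[n]$, correctly encodes all edges among $[n-1]$ and all edges $\{i,v\}$ with $i\le t$, still satisfies (b), and uses at most $f(n-1)+2t$ permutations — is the clean way to organize it. A secondary point to be careful about is that every inserted block must be a genuine permutation of the \emph{full} vertex set $[n]$ (not of $[n-1]$), so whenever we splice a $v$-carrying block we must also include all of $[n-1]$ in it; checking that such a full permutation can always be arranged with $v$ in the desired position relative to the target $i$, and that the resulting $\{i,j\}$-subword for $i,j\le n-1$ only sees a cyclic shift's worth of extra copies (hence, by Theorem~\ref{cyclic-shift}-type reasoning applied locally together with Observation~\ref{duplicate}, no change in adjacency), is the remaining routine verification. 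Finally, establishing (b) for the new vertex $n$ is cheap: one of the $v$-carrying permutations we insert can simply be chosen to start with $v$, costing one more permutation, still within the budget $f(n)$.
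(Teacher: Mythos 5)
Your overall strategy --- induct on $n$, use property (b) to locate a permutation of $[n-1]$ starting with each $i$, insert the new vertex $n$ so that the $\{i,j\}$-subwords for $i,j<n$ are untouched (via Observation~\ref{duplicate}) while the $\{i,n\}$-subwords encode adjacency, with a budget of $2(n-1)$ extra permutations --- is exactly the paper's. But the proposal stops short of the one idea that makes the simultaneous control of all pairs $\{i,n\}$ work, and that is precisely the obstacle you yourself flag as "the main obstacle." The paper's resolution is a \emph{global} placement rule: put $n$ at the front of every permutation $P_i$ (so that by default $w|_{\{i,n\}}=(ni)^{f(n-1)}$ is perfectly alternating for every $i$), except at the designated block $P_{k_j}=jQ_j$ of each $j$, which becomes $jnQ_j$ if $n\sim j$ (keeping $j$ in front so that (b) survives; this costs exactly the two admissible boundary occurrences $jj$ and $nn$), and becomes the triple $jnQ_j\;njQ_j\;jnQ_j$ if $n\nsim j$ (forcing at least three occurrences in $w|_{\{j,n\}}$, while restricting to $[n-1]$ as three duplicated copies of $P_{k_j}$, harmless by Observation~\ref{duplicate}). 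Without such a rule, a "local" insertion of $v$ next to the exposed copy of $i$ leaves the position of $v$ in all the \emph{other} blocks unspecified, and it is exactly those other blocks that determine the $\{j,v\}$-counts for $j\neq i$; your sketch never rules out an uncontrolled accumulation of $11$'s there. Note also that your claim that neighbours can be made to alternate "contributing zero" occurrences, and that non-neighbours can be handled by "extra $i$'s," does not fit the permutational constraint: every block must be a full permutation of $[n]$, so each block contributes exactly one $v$ and one $i$, and the only freedom is their relative order within each block.

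Two concrete slips in the parts you did make explicit. The word $12\,21=1221$ has one occurrence of the pattern $11$ (the factor $22$), so under $2$-$11$-representation it makes $1$ and $2$ \emph{adjacent}; it does not represent the non-edge (you need at least three occurrences, e.g.\ $12\,21\,12\,21$). And $12\,12$ violates condition (b), since no permutation in it starts with $2$. So the base case as stated fails both the representation requirement and the invariant (b) on which your induction relies; the paper's base words $12\,21\,12\,12$ and $12\,21\,12\,21$ repair both defects.
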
 
	\begin{proof}
	We use induction on $n$. 
	For the base case when $n = 2$, we take the $2$-$11$-representant $12 ~21 ~12 ~12$ of a complete graph on the vertex set $[2]$, and we take the $2$-$11$-representant $12 ~21 ~12 ~21$ of two isolated vertices $1$ and $2$.
	
	Suppose $n \geq 3$ and let $G$ be a graph with the vertex set  $V=[n]$.
	By the induction hypothesis, the induced subgraph $G\setminus n$ can be $2$-$11$-represented by a word 
	$$P_1 P_2 \cdots P_{f(n-1)},$$ 
	where each $P_i$ is a permutation of $[n-1]$.
	Note that, by the condition (\ref{bbb}), for each $i \in [n-1]$ we can choose one $k_i \in [f(n-1)]$ so that the permutation $P_{k_i} = i Q_i$ where $Q_i$ is a permutation of $[n-1]\setminus\{i\}$.
	
	Now we construct a $2$-$11$-representant $w$ of $G$ satisfying the conditions (\ref{aaa}) and (\ref{bbb}).
	For each $i \in [f(n-1)]$, let 
	$$P_i' := \begin{cases}j n Q_j & \;\text{if }i = k_j\text{ for some }j\in[n-1]\text{ and }n\sim j \\ j n Q_j  ~  n j Q_j  ~  j n Q_j & \;\text{if }i = k_j\text{ for some }j\in[n-1]\text{ and }n\nsim j \\ n P_i & \;\text{otherwise},\end{cases}$$
	and define $$w = P_1' \cdots P_{f(n-1)}'.$$
Note that all of $n P_i, j n Q_j$ and $n j Q_j$ are permutations of $[n]$. Thus, the word $w$ is a concatenation of at most $f(n-1) + 2(n-1) = n^2 - n + 2$ permutations of $[n]$ and the condition (\ref{bbb}) obviously holds.
	It remains to show that $w$ $2$-$11$-represents the graph $G$.
	
	By applying Observation~\ref{duplicate} repeatedly, we observe that the subword of $w$ induced by $[n-1]$ $2$-$11$-represents the graph $G \setminus n$.
	Hence, it is sufficient to check whether the adjacency of the vertex $n$ is preserved.
	For each $i \in [n-1]$, the subword of $w$ induced by the letters $i$ and $n$ is given by $$(ni)^a ~(in)~ (ni)^b \;\;\text{ if }\;\; i \sim n,$$ having at most $2$ occurrences of the consecutive pattern $11$, and is given by $$(ni)^a ~in ~ni ~in ~(ni)^b \;\;\text{ if }\;\; i \nsim n$$ with at least $3$ occurrences of the consecutive pattern $11$.
	This completes the proof.
	\end{proof}

	Recall that, by Theorem~\ref{connected-thm}, a graph $G$ is $2$-$11$-representable if and only if each connected component of $G$ is $2$-$11$-representable.
	It is obvious that every connected graph $G$ on at least two vertices contains no isolated vertex, and that there always exists a vertex $v$ in $G$ such that $G\setminus v$ is again connected.
	Applying this observation, we can improve the function $f(n)$ in Theorem~\ref{all2-11} for connected graphs as follows.
	\begin{theorem}\label{improved-length-thm}
	Let $G = (V,E)$ be a connected graph on $n \geq 2$ vertices.
	Then there is a $2$-$11$-representant $w$ over the alphabet $V$ such that
		\begin{enumerate}[(a)]
		\item $w$ is a concatenation of $f(n) = n^2-3n+4$ permutations of $V$, and
		\item for each $i \in V$, there is a permutation $P$ in the word $w$ which starts with $i$.
		\end{enumerate}
	\end{theorem}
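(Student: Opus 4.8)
The plan is to mimic the inductive construction in the proof of Theorem~\ref{all2-11}, but exploit connectivity at each step to avoid the costly branch where the new vertex $n$ is \emph{non-adjacent} to the vertex $j$ attached to a ``starting'' permutation $P_{k_j}$. The key structural fact is that every connected graph $G$ on $n\ge 2$ vertices has no isolated vertex, and moreover admits a vertex $v$ such that $G\setminus v$ is still connected (e.g.\ a non-cut vertex, which always exists, such as a leaf of a spanning tree). So the induction will be carried out over connected graphs only.

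First I would set up the base case: for $n=2$ the only connected graph is $K_2$, represented by $12~21~12~12$, which is $f(2)=2^2-3\cdot 2+4=2$ permutations — wait, that is four permutations, so the base case needs small-$n$ bookkeeping; I would instead check $n=2$ and $n=3$ directly (for $n=3$ the connected graphs are $P_3$ and $K_3$, and $f(3)=4$), and make sure the chosen words satisfy condition (b). Then for the inductive step, given a connected $G$ on $n\ge 4$ vertices, pick $v=n$ so that $G\setminus n$ is connected, and apply the induction hypothesis to get a $2$-$11$-representant $P_1\cdots P_{f(n-1)}$ of $G\setminus n$ using $f(n-1)=(n-1)^2-3(n-1)+4$ permutations, with a starting permutation $P_{k_i}=iQ_i$ for each $i\in[n-1]$. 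The crucial point: since $G$ is connected and $G\setminus n$ is connected, the vertex $n$ has at least one neighbour in $[n-1]$, but it may still be non-adjacent to many vertices. So the doubling trick is still needed for each $j\nsim n$. To save permutations, I would instead \emph{not} attach $n$ inside every $P_{k_j}$; rather, for each $i\in[n-1]$ with $i\sim n$ insert $n$ into $P_{k_i}$ cheaply as $inQ_i$ (or $niQ_i$), and handle all the non-neighbours of $n$ in a single block appended at the end, say a factor of the form $(n\,\sigma)(\sigma'\,n)(n\,\sigma'')$ built from permutations $\sigma$ of $[n-1]$ arranged so that every non-neighbour $j$ of $n$ contributes the pattern $(nj)^a\,jn\,nj\,in\,(nj)^b$ with $\ge 3$ occurrences of $11$, while every neighbour keeps $\le 2$. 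Count: the extra permutations are $n\cdot P_i$ for the non-starting $P_i$'s plus a bounded number of tail permutations, giving the target $n^2-3n+4$ after simplification.

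The main obstacle will be the \emph{simultaneous} bookkeeping in the tail block: I need a fixed, bounded-length word over $[n]$ whose restriction to $\{j,n\}$ gives $\ge 3$ occurrences of $11$ for \emph{all} non-neighbours $j$ at once, and $\le 2$ for all neighbours, without disturbing the adjacencies already correctly encoded among $[n-1]$ (this is automatic by Observation~\ref{duplicate} as long as the tail is a concatenation of whole permutations of $[n-1]$ with $n$ inserted). The cleanest route is: take three copies of a permutation $\rho$ of $[n-1]$, insert $n$ at the very front of the first and third copies and right after the last non-neighbour (equivalently, just before the first neighbour in $\rho$, reordering $\rho$ so that all non-neighbours precede all neighbours) in the middle copy; then for a non-neighbour $j$ the induced word is $nj~\cdots~n j$ across copies with an extra $jn$ flip giving $\ge 3$ ones, and for a neighbour $i$ it is $n i~\cdots$ with at most $2$. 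I would then carefully recount the total number of permutations — $f(n-1)$ from the hypothesis, minus nothing, plus (number of non-neighbours of $n$) $\cdot 2$ would be too much, so the improvement really comes from choosing $v=n$ of small degree is \emph{not} what helps; rather it comes from not doubling inside $P_{k_j}$ but doubling only in the $O(1)$-size tail. Verifying that this yields exactly $n^2-3n+4$ rather than $n^2-n+2$ is the arithmetic I would do last, and I expect it to work because connectivity removes the ``$2(n-1)$'' overhead and replaces it with an additive constant plus the recursion $f(n)=f(n-1)+(\text{something linear that telescopes correctly})$. Finally I would note condition (b) is preserved since each inserted-$n$ permutation still starts with some prescribed letter and we can always arrange one permutation in the tail to start with $n$ itself.
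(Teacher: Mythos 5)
You have the right structural ingredients (remove a non-cut vertex so that $G\setminus n$ stays connected; connectivity forces $n$ to have at least one neighbour), but the inductive step you then build is both more complicated than necessary and not sound as written. The intended argument is simply to rerun the construction of Theorem~\ref{all2-11} verbatim: each non-neighbour $j$ of $n$ costs two extra permutations (the tripling of $P_{k_j}$), each neighbour costs none, and since $n$ has at least one neighbour there are at most $n-2$ non-neighbours, giving the recursion $f(n)\le f(n-1)+2(n-2)$, which together with the base word $12~21$ for $K_2$ yields exactly $n^2-3n+4$. No new tail construction is needed, and the verification that the word $2$-$11$-represents $G$ is word-for-word that of Theorem~\ref{all2-11}.

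Your alternative --- inserting $n$ cheaply everywhere and handling all non-neighbours in a single $O(1)$-size tail of reordered permutations --- has two concrete problems. First, it is not ``automatic by Observation~\ref{duplicate}'' that the tail leaves the adjacencies inside $[n-1]$ intact: that observation only licenses duplicating a permutation already present in the word, whereas your tail consists of copies of a \emph{reordered} permutation $\rho$ (non-neighbours before neighbours). Appending such a $\rho$ can create a new occurrence of the pattern $11$ at the junction for an adjacent pair $x,y\in[n-1]$ whose induced subword already has two occurrences, thereby deleting an edge of $G\setminus n$. Second, the arithmetic does not close: a constant-size tail would give $f(n)=f(n-1)+O(1)$, i.e.\ a linear bound, which is incompatible with the increment $2(n-2)$ that the quadratic target requires; you acknowledge this tension (``an additive constant \dots something linear that telescopes correctly'') but never resolve it. Reverting to the unmodified construction of Theorem~\ref{all2-11}, with the single observation that at most $n-2$ of the $P_{k_j}$'s get tripled, fixes both issues at once.
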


\section{Open problems on $k$-$11$-representable graphs}\label{open-prob-sec}

The most intriguing open question in the theory of $k$-$11$-representable graphs is the following.

\begin{problem}\label{prob1} {\em Is it true that any graph is $1$-$11$-representable? If not, then which classes of graphs are $1$-$11$-representable? In particular, are all planar graphs or all $4$-chromatic graphs $1$-$11$-representable?}\end{problem}  

Note that $W_5$ shows that not all planar or $4$-chromatic graphs are word-representable, justifying our specific interest to these classes of graphs.

By Theorem~\ref{unif-wr}, any word-representable graph can be represented by a uniform word. It is known that the class of perutationally word-representable graphs coincides with the class of comparability graphs \cite{KP}.
Also, by Theorem~\ref{all2-11} and Lemma~\ref{extension-lem},  for every $k \geq 2$ any graph is permutationally $k$-$11$-representable.
Thus, the following questions are natural. 

\begin{problem}\label{prob3} {\em  Is it true that any $1$-$11$-representable graph can be represented by a concatenation of permutations? Or, at least, by a uniform word?} \end{problem}  


It is known~\cite{HKP} that if a graph $G$ with $n$ vertices is word-representable, then it can be represented by a uniform word of length at most $2n(n-\kappa)$  where $\kappa$ is the size of a maximum clique in $G$. An upper bound for the length of $k$-$11$-representants for $k\geq 2$ can be derived from Theorems~\ref{all2-11} and \ref{improved-length-thm} and Lemma~\ref{extension-lem}. In particular, $2$-$11$-representants are of length $O(n^3)$. However, we have no upper bounds for the length of words $1$-$11$-representing graphs. 

\begin{problem}\label{prob5}  {\em  Provide an upper bound for the length of words $1$-$11$-representing graphs.}  \end{problem}  

Remind that Theorem~\ref{interval-thm} shows that the class of interval graphs is precisely the class of $1$-$11$-representable graphs that can be represented $2$-uniformly.

\begin{problem}\label{prob6}  {\em  Does the class of $m$-uniformly $1$-$11$-representable graphs, for $m\geq 3$, have any interesting/useful properties? In particular, is there a description of such graphs in terms of forbidden subgraphs? A good starting point to answer the last question should be the case of $m=3$.}  \end{problem}  

	As the first step in the direction of Problem~\ref{prob6}, we discuss a geometric realization of $r$-uniformly $k$-$11$-representable graphs, which might give new results on characterization problems for word-representable graphs.
	This is motivated from the fact that a graph is $2$-uniformly $0$-$11$-representable if and only if it is a circle graph.

	Take a convex curve $\gamma = \gamma(t)$, $t \in [0,1]$, in the plane (not necessarily closed) and consider a set of $n\times r$ distinct real numbers $S=\{x_1, x_2, \ldots, x_{nr}\}$ such that $$0\leq x_1 < x_2 < \cdots < x_{nr} \leq 1.$$
	Color each element in $S$ by $[n]$, i.e. we choose an injection $\phi:S\to[n]$, such that $|S_i|=r$ where $S_i := \{x\in S:\phi(x)=i\}$.
	Say $S_i := \{x_{i_1}x_{i_2},\ldots,x_{i_r}\}$ where $x_{i_j}<x_{i_k}$ whenever $j<k$.
	Now we draw $n$ piecewise linear convex curves
	$$C_i:\; \gamma(x_{i_1})-\gamma(x_{i_2})-\cdots-\gamma(x_{i_r}),\;\;i\in[n]$$
	connecting the points $\gamma(x_{i_1}),\ldots,\gamma(x_{i_r})$.
	See Figure~\ref{piecewiselinear} for an illustration.
	
	\begin{figure}[htbp]
\centering
\begin{tikzpicture}[main node/.style={fill,circle,draw,inner sep=0pt,minimum size=4pt}, scale=2]

\begin{scope}[xshift=0cm, yshift=0cm]
\draw (0:2) arc (0:180:2);
\node[main node, label=left:$\gamma(t_{i,1})$] (t1) at (170:2){};
\node[main node, label=left:$\gamma(t_{i,2})$] (t2) at (140:2){};
\node[main node, label=above:$\gamma(t_{i,3})$] (t3) at (110:2){};
\node[main node, label=right:$\gamma(t_{i,r-1})$] (t4) at (50:2){};
\node[main node, label=right:$\gamma(t_{i,r})$] (t5) at (20:2){};
\node[label=right:{{\Large $\gamma$}}] at (5:2){};
\node[label=above:{$\cdots$}] at (77.5:2){};
\draw[thick] (t1)--(t2)--(t3);
\draw[thick, dotted] (t3)--(80:2)--(t4); \draw[thick] (t4)--(t5);
\node at (2.6,0){};
\end{scope}
\end{tikzpicture}
\caption{The piecewise linear convex curve $C_i$}
\label{piecewiselinear}
\end{figure}
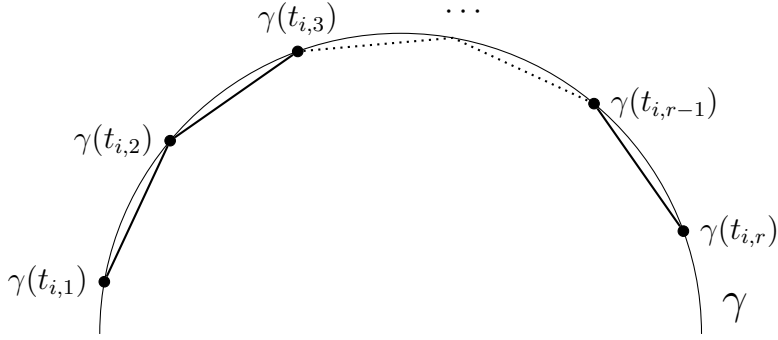

	Note that for every two distinct $C_i$ and $C_j$, we have $|C_i \cap C_j| \leq 2r-3$.
	For each $m \in [2r-3]$, we define {\em $m$-intersection graph} $I_m(\mathcal{C}\phi)$ of $\mathcal{C}_\phi = \{C_1,\ldots, C_n\}$ as the graph on $[n]$ such that two distinct vertices $i, j\in[n]$ are adjacent if and only if $|C_i \cap C_j| \geq m$.
	In particular, when $m = 1$, the graph $I_1(\mathcal{C}\phi)$ is just the intersection graph of $\mathcal{C}$.

	On the other hand, regarding $[n]$ as an alphabet, we construct a word over $[n]$:
	$$w = \phi(x_1)\phi(x_2)\ldots\phi(x_{nr}).$$
	Then the subword of $w$ induced by two distinct letters $i$ and $j$ has at most $2r-3-m$ occurences of the consecutive pattern $11$ if and only if $i\sim j$ (i.e.\ $ij$ is an edge) in the graph $I_m(\mathcal{C})$.
	As an immediate consequence of this relation, we observe the following.
	\begin{prop}
	For every positive integer $m$ and $r\geq2$ such that $1\leq m\leq 2r-3$, a graph $G$ is $r$-uniform $(2r-3-m)$-$11$-representable if and only if there exists a coloring $\phi:S\to[n]$ so that $G = I_m(\mathcal{C}_\phi)$.
	\end{prop}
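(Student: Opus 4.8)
The plan is to unwind the definitions and observe that the word $w = \phi(x_1)\cdots\phi(x_{nr})$ built from a coloring $\phi$ is, essentially by construction, an $r$-uniform word over $[n]$, and conversely every $r$-uniform word over $[n]$ arises this way from a suitable coloring of $n r$ points on any fixed convex curve $\gamma$. So the content of the proposition is entirely the ``dictionary'' identity already highlighted in the text: for two distinct letters $i,j$, the number of occurrences of the consecutive pattern $11$ in $w|_{\{i,j\}}$ equals $2r-3 - |C_i\cap C_j|$, at least whenever the two curves are in general position so that $|C_i\cap C_j|$ attains its natural range. Granting that identity, $G$ is $r$-uniform $(2r-3-m)$-$11$-representable, witnessed by some $r$-uniform word $w$, if and only if $w|_{\{i,j\}}$ has at most $2r-3-m$ occurrences of $11$ exactly when $ij\in E(G)$, which by the identity says $|C_i\cap C_j|\ge m$ exactly when $ij\in E(G)$, i.e.\ $G = I_m(\mathcal{C}_\phi)$.

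Concretely I would proceed in three steps. \textbf{Step 1 (words from colorings).} Fix the convex curve $\gamma$ and a coloring $\phi:S\to[n]$ with each colour class of size $r$; form $w$ as above. It is immediate that $w$ is $r$-uniform. Now analyze $w|_{\{i,j\}}$: this word records, in left-to-right order along $\gamma$, the interleaving of the $r$ endpoints $\gamma(x_{i_1}),\dots,\gamma(x_{i_r})$ of $C_i$ with the $r$ endpoints of $C_j$. The number of ``descents'' (maximal runs of a single letter, minus one) in this $2r$-letter word is precisely the number of occurrences of $11$; and a classical convexity/crossing count shows that for two convex chains on $r$ points each along a convex curve, the number of crossing points $|C_i\cap C_j|$ plus the number of $11$-occurrences in the interleaving word is exactly $2r-3$ (this is the ``bound'' $|C_i\cap C_j|\le 2r-3$ made into an equality by counting alternations, when the chains are in general position). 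So $\#11(w|_{\{i,j\}}) = 2r-3-|C_i\cap C_j|$, giving $\#11(w|_{\{i,j\}}) \le 2r-3-m \iff |C_i\cap C_j|\ge m$.

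\textbf{Step 2 (colorings from words).} Conversely, given any $r$-uniform word $u$ over $[n]$ of length $nr$, pick any $nr$ points $0\le x_1<\cdots<x_{nr}\le 1$ and set $\phi(x_t)$ to be the $t$-th letter of $u$; then $u$ is exactly the word associated to $\phi$. \textbf{Step 3 (assemble).} Combining: $G$ has an $r$-uniform $(2r-3-m)$-$11$-representant $u$ if and only if, for the corresponding $\phi$, the curve family $\mathcal{C}_\phi$ satisfies $ij\in E(G)\iff |C_i\cap C_j|\ge m\iff ij\in E(I_m(\mathcal{C}_\phi))$, i.e.\ $G=I_m(\mathcal{C}_\phi)$.

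The main obstacle is Step 1, and in particular making the crossing-count identity $|C_i\cap C_j| + \#11(w|_{\{i,j\}}) = 2r-3$ airtight, including the general-position caveat: a priori one only has the inequality $|C_i\cap C_j|\le 2r-3$, so one must argue that by perturbing the points $x_t$ within their order (which does not change $w$) one can always realize the extremal configuration in which every potential crossing dictated by the alternation pattern of $w|_{\{i,j\}}$ is actually achieved, simultaneously for all pairs $i,j$. Equivalently, one shows that for \emph{generic} choices of the $x_t$ along a strictly convex $\gamma$, consecutive segments of $C_i$ and $C_j$ cross iff the corresponding four endpoints alternate in the cyclic order along $\gamma$, and then a direct induction on $r$ (peeling off the last point of whichever chain ends later) yields the equality. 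Once this combinatorial geometry lemma is in hand, the rest is the bookkeeping of Steps 2 and 3, which is routine.
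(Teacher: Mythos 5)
Your proposal takes essentially the same route as the paper: both directions reduce to the dictionary between the number of occurrences of the pattern $11$ in $w|_{\{i,j\}}$ and the crossing number $|C_i\cap C_j|$, a fact the paper asserts without proof and which you correctly isolate as the only real content, so your write-up is if anything more careful than the published one. One small correction to your Step 1: writing $\#$ for the number of occurrences of $11$ in $w|_{\{i,j\}}$, the exact identity $|C_i\cap C_j|=2r-3-\#$ fails precisely when $w|_{\{i,j\}}=i^rj^r$ or $j^ri^r$ (there $\#=2r-2$, so the right-hand side is $-1$ while the chains are disjoint); the correct statement is $|C_i\cap C_j|=\max\{0,\,2r-3-\#\}$, which still yields the needed equivalence $\#\le 2r-3-m\iff|C_i\cap C_j|\ge m$ for every $m\ge1$, so the proposition is unaffected. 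Also, the general-position issue is lighter than you make it: on a strictly convex curve two segments with four distinct endpoints cross if and only if their endpoints interleave, so which pairs of edges cross is determined combinatorially by the word with no perturbation argument; genericity is only needed to guarantee that distinct crossing pairs contribute distinct points to $C_i\cap C_j$.
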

	\begin{proof}
	By the above argument, it is sufficent to prove that every $r$-uniform $(2r-3-m)$-$11$-representable graph $G$ assigns a coloring $\phi:S\to[n]$ so that $G = I_m(\mathcal{C}_\phi)$.
	This is obvious since an $r$-uniform $(2r-3-m)$-$11$-representant of $G$ naturally gives a coloring $\phi:S\to[n]$ with $|S_i| = r$, and the corresponding family $\mathcal{C}_\phi$ of $n$ piecewise linear convex curves satisfies that $I_m(\mathcal{C}_\phi)= G$.
	\end{proof}
	Note that for every $k > 2r-3$, the only $r$-uniform $k$-$11$-representable graphs are complete graphs.
	When $m = 0$, the $r$-unifrom $(2r-3)$-$11$-representable graphs can be specified as a well-known graph class.
	\begin{prop}
	A graph is $r$-uniform $(2r-3)$-$11$-representable if and only if it is an interval graph.
	\end{prop}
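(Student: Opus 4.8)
The plan is to reduce the claim to the special case $m=0$ of the general setup, where a graph is $r$-uniform $(2r-3)$-$11$-representable if and only if the subword induced by any two letters $i,j$ has \emph{at most} $2r-3-0 = 2r-3$ occurrences of the pattern $11$ precisely when $i\sim j$. Since two distinct curves $C_i$ and $C_j$ from the construction satisfy $|C_i\cap C_j|\le 2r-3$ always, the condition ``$|C_i\cap C_j|\ge m$ with $m=0$'' is vacuous — every pair intersects in ``at least $0$ points'' — which is not what we want directly; instead I will use the word-theoretic reformulation. The key translation, already recorded in the excerpt, is that for a $2$-uniform word the number of $11$-occurrences in $w|_{\{i,j\}}$ is $\le 1$ iff the two chords cross, giving circle graphs; I will establish the analogous but sharper statement that for an $r$-uniform word, the interval spanned by the first and last copy of $i$ is, after the obvious identification, an interval on the line, and that two such intervals overlap iff $w|_{\{i,j\}}$ has at most $2r-3$ occurrences of $11$.

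First I would prove the forward direction: suppose $G$ is an interval graph. By Theorem~\ref{interval-thm} (or directly, as in its proof) $G$ is $2$-uniformly $1$-$11$-representable by a word $w$; moreover, one may assume overlapping intervals overlap in more than one point, so the $2$-uniform word $w=w_1\cdots w_{2n}$ has the property that $w|_{\{i,j\}}$ has $0$ or $1$ occurrences of $11$ according to whether $ij\in E$ or not. Now I would ``pad'' $w$ up to an $r$-uniform word while controlling the count of $11$'s. Concretely, take the interval representation with intervals $I_v=[a_v,b_v]$ and replace each $I_v$ by $r$ points $\gamma(t_{v,1}),\dots,\gamma(t_{v,r})$ placed so that $t_{v,1}$ is near $a_v$, $t_{v,r}$ is near $b_v$, and the middle points $t_{v,2},\dots,t_{v,r-1}$ are squeezed into a tiny neighbourhood of, say, $a_v$ (so they carry no new intersections). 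Reading off the induced word on $\{i,j\}$: when $I_i$ and $I_j$ are disjoint, all $r$ copies of $i$ precede all $r$ copies of $j$ (or vice versa), giving exactly $2(r-1)=2r-2>2r-3$ occurrences of $11$, i.e.\ a non-edge; when they overlap, the interleaving at the two endpoints reduces the count of $11$'s by at least one to at most $2r-3$, i.e.\ an edge. This shows $G$ is $r$-uniformly $(2r-3)$-$11$-representable; equivalently, one checks the corresponding curve family $\mathcal C_\phi$ satisfies $I_0(\mathcal C_\phi)=G$ in the sense that $|C_i\cap C_j|$ exceeds the threshold exactly on edges.

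For the converse, suppose $G$ is $r$-uniform $(2r-3)$-$11$-represented by a word $w$, and let $p_v<q_v$ be the positions of the first and last copy of $v$ in $w$. Define $J_v=[p_v,q_v]$. I claim $J_i\cap J_j\ne\emptyset$ iff $ij\in E$. If $J_i$ and $J_j$ are disjoint, then one of $i,j$ has all its copies before the other's, forcing $w|_{\{i,j\}}$ to be $i^r j^r$ (up to swapping), which has exactly $2r-2$ occurrences of $11$, hence $>2r-3$, so $ij\notin E$ — contrapositive gives edge $\Rightarrow$ overlap. Conversely, if $J_i$ and $J_j$ overlap then $w|_{\{i,j\}}$ has at least one ``alternation point'' inside the common part, so among the $2r$ letters there are at least two adjacent unequal pairs, reducing the $11$-count to at most $2r-3$; hence $ij\in E$. (The numerology here — ``$2r$ letters, at least two descents, so at most $2r-3$ equal-adjacent pairs'' — is the place I expect to have to be careful, because a single overlap point does not obviously force \emph{two} alternations; the correct statement is that overlap of $J_i$ and $J_j$ forces at least one position where a copy of one letter is immediately followed by a copy of the other in $w|_{\{i,j\}}$ \emph{and} at least one such position with the roles reversed, since the word starts with one letter and, having an overlapping interior, cannot be a block word. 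This is the main obstacle and the step I would write out in full.) Granting this, $\{J_v\}$ realizes $G$ as an interval graph.

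The only genuine subtlety, then, is the clean equivalence ``$J_i\cap J_j\ne\emptyset \iff w|_{\{i,j\}}$ has at most $2r-3$ occurrences of $11$'' for $r$-uniform words; once that is in hand, both directions are immediate, the forward direction using the padded interval-representation word and the backward direction using the spanning intervals $J_v$. I would present the argument in that order: (1) state and prove the word/interval equivalence lemma for $r$-uniform words; (2) deduce that an $r$-uniform $(2r-3)$-$11$-representant yields an interval representation; (3) conversely, pad an interval representation (equivalently, a $2$-uniform $1$-$11$-representant from Theorem~\ref{interval-thm}) to an $r$-uniform $(2r-3)$-$11$-representant; (4) rephrase in the language of $I_0(\mathcal C_\phi)$ to match the proposition's statement.
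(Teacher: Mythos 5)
Your proposal is correct and follows essentially the same route as the paper: the paper's forward direction likewise observes that non-adjacency forces the induced subword to be the block word $i^r j^r$ (equivalently, at most $2r-3$ occurrences of $11$ means at least three blocks, i.e.\ the spanning intervals $[\text{first } i,\text{last } i]$ overlap), and its backward direction likewise pads the $2$-uniform endpoint word with $r-2$ extra copies of each letter placed between its two original occurrences. The step you flag as the main obstacle closes in one line: a word with $r$ copies each of $i$ and $j$ has exactly $2r-(\#\text{blocks})$ occurrences of $11$, and it has only two blocks precisely when it is $i^rj^r$ or $j^ri^r$, i.e.\ precisely when the spanning intervals are disjoint.
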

	\begin{proof}
	Given an $r$-unifrom $(2r-3)$-$11$-representable graph $G$ on $[n]$, take any $r$-unifrom $(2r-3)$-$11$-representant $w$.
	Clearly two vertices $i$ and $j$ are not adjacent in $G$ if and only if the subword of $w$ induced by $i$ and $j$ consists of $r$ consecutive $i$'s and $r$ consecutive $j$'s, i.e. either $i\ldots i~j\ldots j$ or $j\ldots j~i\ldots i$.
	Embed the word $w$ on a line, and consider an interval $J_i$ defined by the leftmost $i$ and the rightmost $i$.
	Then $G$ is the intersection graph of $\{J_1,\ldots,J_n\}$.

	For the other direction, let $G$ be the intersection graph of intervals $\{J_1,\ldots,J_n\}$.
	Since $n$ is finite, we may assume that each $J_i$ is bounded and no two intervals share an end-point.
	We label the end-points of $J_i$ by $i$, and construct a $2$-uniform word by reading the labels from left to right.
	Then we insert $r-2$ $i$'s in arbitrary positions between two original $i$'s in the word.
	This gives us a $r$-unifrom $(2r-3)$-$11$-representant of $G$.
	\end{proof}

\section*{Acknowledgments} This work was supported by the National Research Foundation of Korea (NRF) grant funded by the Korea government (MSIP) (2016R1A5A1008055) and the Ministry of Education of Korea (NRF-2016R1A6A3A11930452). The work of the last author was supported by the program of fundamental scientific researches of the SB RAS I.5.1., project 0314-2016-0014.

\end{document}